\DeclareMathAlphabet{\mathbf}{OT1}{cmr}{bx}{it}
\newcommand{\VS}{{\vspace*{0.0in}}}
\newcommand{\reals}{\mathbb{R}}
\newcommand{\complex}{\mathbb{C}}
\newcommand{\realpart}{\operatorname{Re}}
\newcommand{\boundary}{{\cal C}}
\newcommand{\ConInt}[1]{\frac{1}{2\pi\eye}\oint_{\boundary}\,{#1}\,dz}
\newcommand{\Bnorm}[1]{\| {#1} \|_B}
\newcommand{\Enorm}[1]{\| {#1} \|}
\newcommand{\inverse}[1]{{#1}^{-1}}
\newcommand{\iterate}[2]{#1_{({#2})}}
\newcommand{\TSec}[2]{#1_{#2}}
\newcommand{\BSec}[2]{#1_{{#2}'}}
\newcommand{\XTop}[1]{\TSec{X}{#1}}
\newcommand{\XtTop}[1]{\TSec{X^*}{#1}}
\newcommand{\XBot}[1]{\BSec{X}{#1}}
\newcommand{\XtBot}[1]{\BSec{X^*}{#1}}
\newcommand{\GammaTop}[1]{\TSec{\Gamma}{#1}}
\newcommand{\GammaBot}[1]{\BSec{\Gamma}{#1}}
\newcommand{\LambdaTop}[1]{\TSec{\Lambda}{#1}}
\newcommand{\LambdaBot}[1]{\BSec{\Lambda}{#1}}
\newcommand{\vv}[1]{\mathbf{#1}}
\newcommand{\vvu}{\vv{u}}
\newcommand{\vvv}{\vv{v}}
\newcommand{\vvx}{\vv{x}}
\newcommand{\vvy}{\vv{y}}
\newcommand{\vvz}{\vv{z}}
\newcommand{\vve}{\vv{e}}
\newcommand{\vvf}{\vv{f}}
\newcommand{\vvq}{\vv{q}}
\newcommand{\vvs}{\vv{s}}
\renewcommand{\omega}{\sigma}
\newcommand{\ctrans}[1]{{#1}^{*}}
\newcommand{\spann}{{\rm span}}
\newcommand{\half}{\frac{1}{2}}
\newcommand{\tmpa}{{\rm dummy}}
\newcommand{\tmpb}{{\rm dummy}}
\newcommand{\Deltop}{\xi}
\newcommand{\Delbot}{\zeta}
\newcommand{\Deldc}{\psi}
\newcommand{\Deltrans}[1]{
\left[
\begin{array}{c | c}
  \begin{array}{c}
  \iterate{\Deltop}{#1} \\ 
  \iterate{\Delbot}{#1}
  \end{array}
 & \iterate{\Deldc}{#1}
\end{array}
\right]
}
\newcommand{\Linv}[1]{\iterate{L^{-1}}{{#1}}}
\newcommand{\quadf}{\rho}
\newcommand{\quadfbdd}{\xi}
\newcommand{\reference}{{\rm ref}}
\newcommand{\lambdamin}{\lambda_{-}}
\newcommand{\lambdamax}{\lambda_{+}}
\newcommand{\lambintc}{[\lambdamin,\lambdamax]}
\newcommand{\lambnum}{e}
\newcommand{\surM}{M}
\newcommand{\calI}{{\cal I}}
\newcommand{\calC}{{\cal C}}
\newcommand{\projector}{{X_\calI}\ctrans{X}_\calI B}
\newcommand{\Ared}{\widehat{A}}
\newcommand{\Bred}{\widehat{B}}
\newcommand{\Xred}{\widehat{X}}
\newcommand{\Lamred}{\widehat{\Lambda}}
\newcommand{\eye}{{\iota}}
\newcommand{\bydef}{\stackrel{\mathrm{def}}{=}}
\title{FEAST As A Subspace Iteration Eigensolver \\ Accelerated By Approximate Spectral Projection}
\author{
Ping Tak Peter Tang\thanks{
Intel Corporation, 2200 Mission College Blvd, Santa Clara, CA 95054
({\tt Peter.Tang@intel.com}).}
\and
Eric Polizzi\thanks{
Department of Electrical and Computer Engineering, University of Massachusetts, Amherst, MA 01003
({\tt polizzi@ecs.umass.edu}).}
}
\begin{document}

\maketitle


\begin{abstract}

The calculation of a segment of eigenvalues and their corresponding eigenvectors of a Hermitian matrix or
matrix pencil has many applications. 
A new density-matrix-based algorithm has been
proposed recently and a software package FEAST has been developed. The density-matrix approach
allows FEAST's implementation to exploit a key strength of modern computer architectures, namely,
multiple levels of parallelism. 
Consequently, the software package has been well received, especially
in the electronic structure community. Nevertheless, theoretical analysis of
FEAST has lagged. 
For instance, the FEAST algorithm has not been proven to converge.
This paper offers a detailed 
numerical analysis of FEAST. In particular, we show that the FEAST algorithm can be understood as 
an accelerated subspace iteration algorithm in conjunction with the Rayleigh-Ritz procedure.
The novelty of FEAST 
lies in its accelerator which is a rational matrix function that approximates the spectral projector
onto the eigenspace in question.
Analysis of the numerical nature of this approximate spectral projector and the resulting subspaces generated in the
FEAST algorithm establishes the algorithm's convergence. 
This paper shows that FEAST is resilient against rounding errors and establishes properties that can
be leveraged to enhance the algorithm's robustness.
Finally, we propose an extension of FEAST to handle non-Hermitian problems and
suggest some future research directions.

\end{abstract}

\begin{keywords}
generalized eigenvalue problem, subspace iteration, spectral projection
\end{keywords}

\begin{AMS}
15A18, 65F15
\end{AMS}

\pagestyle{myheadings}
\thispagestyle{plain}
\markboth{P. T. P. TANG AND E. POLIZZI}{FEAST AS ACCELERATED SUBSPACE ITERATION}


\section{Introduction}

Solving matrix eigenvalue problems is crucial in many scientific and engineering applications. 
Robust solvers for problems of moderate size are well developed 
and widely available~\cite{LAPACK-1999}.
These are sometimes referred to as direct solvers~\cite{demmel-numerical-linear-algebra}. 
Direct solvers typically calculate the entire spectrum of the matrix or matrix pencil in question. 
Yet in many applications, especially for those where the underlying linear systems are large and sparse,
it is often the case that
only selected segments of the spectrum are of interest. 
Polizzi recently proposed a density-matrix-based algorithm~\cite{polizzi-2009} named FEAST
for Hermitian eigenproblems of this kind.
From an implementation point of view, FEAST's main building block is a numerical-quadrature
computation, consisting of solving independent
linear systems, each for multiple right hand sides. 
This building block contains multiple levels of parallelism and thus
exploits the features of modern computing architectures very well. 
A software package FEAST~\cite{FEAST-solver}
based on this approach has been made available since 2009. 
Nevertheless, theoretical analysis of FEAST has been lagging its software development. 
In particular, there is no theoretical study available on the conditions under which FEAST
converges, and if so, at what rate.

\VS
This paper shows that the FEAST algorithm can be understood as a standard subspace iteration in
conjunction with the Rayleigh-Ritz procedure. 
FEAST therefore belongs to the class of projection 
methods that typically construct bases to particular subspaces and then obtain the 
corresponding Ritz values and vectors. 
(For example, see~\cite{saad-eigenvalue-problems-2011} Chapter 6.) 
In Krylov projection methods such as Lanczos~\cite{bai-etal-template-2000,cullum-willoughby-1985,
parlett-1998} or Arnoldi~\cite{lehoucq-sorensen-1996}, the subspace at iteration $m$
is spanned by a set of the form $\{\vvv, M\vvv, M^2\vvv, \ldots,M^{m-1}\vvv\}$,
where $M$ is the matrix in question. 
The dimensions of the subspaces grow as iterations proceed. 
The initial vector $\vvv$ can be chosen at random, or
constructed carefully including the use of a ``filter''
$\vvv = \quadf(M)\vvv_0$ for some $\vvv_0$. 
Filters are often called accelerators because they can hasten convergence when chosen appropriately. 
We use the two terms interchangeably throughout this paper.
Very often, the accelerator $\quadf(M)$ is a polynomial 
in $M$~\cite{saad-1984,zhou-saad-2006}.
Jacobi-Davidson~\cite{sleijpen-vandervorst-2000} is another notable 
projection method on expanding 
subspaces. 
More recently, Sakurai and Sugiura~\cite{sakurai-sugiura-2003} proposed  
a projection method (SS-projection) that uses certain moment matrices.
SS-projection is subsequently recognized as a Krylov 
method in~\cite{ikegami-sakurai-nagashima-2010} (see Theorem 7) and generalized to 
a block Krylov method~\cite{ikegami-sakurai-nagashima-2010,ikegami-sakurai-2010}.
In the terminology of filtered Krylov methods, the subspaces in
SS-projection are spanned by sets of the form
$\{\vvv,f(M)\vvv,f^2(M)\vvv,\ldots,f^{m-1}(M)\vvv\}$
where $\vvv$ is a filtered random vector $\vvv = \quadf_0(M)\vvv_0$.
Here $\vvv_0$ is chosen randomly, $\quadf_0(M)$
approximates a zeroth-moment matrix, and
$f^k(M)$, $k \ge 1$, approximates a $k$-th moment matrix. 

\VS
In contrast to these methods that project onto expanding subspaces,
there is a class of projection methods that
project onto subspaces of a fixed dimension. The subspaces, but not their dimensions,
change as iterations proceed. 
Trace minimization~\cite{sameh-wisniewski-1982,sameh-tong-2000} is one example of such methods,
but the classical representative is subspace 
iterations (see~\cite{bauer-1958} or discussions in standard 
textbooks such as~\cite{bai-etal-template-2000,golub-vanloan-1989,demmel-numerical-linear-algebra,
saad-eigenvalue-problems-2011}).
Here the $p$-dimensional subspaces are spanned by vectors of the form
$M^k V$ where $V$ consists of $p$ vectors chosen randomly. 
Accelerators (filters) can be applied so that the subspaces are spanned instead by vectors
of the form $f_{m}(M)\cdot f_{m-1}(M)\,\cdots\,f_1(M) V$. The accelerators $f_k(M)$ can be the same
for all $k$, or can be made adaptive to exploit new information gained as iterations proceed.
In this context, we show that FEAST is a subspace iteration accelerated by a non-adaptive
accelerator $f_k(M) = \quadf(M)$ for all $k$ where $\quadf(M)$ approximates the spectral
projector to the invariant eigenspace in question.
This accelerator $\quadf(M)$ in fact corresponds to the approximate 
zeroth-moment matrix $\quadf_0(M)$
in the SS-projection method. Both filters are constructed in a similar way, exploiting
the Cauchy integral formula. Nevertheless, as mentioned previously,
SS-projection is fundamentally a Krylov method that uses
subspaces spanned by sets of the form 
$\{\vvv, f(M)\vvv, f^2(M)\vvv, \ldots, f^{m-1}(M)\vvv\}$, $\vvv$ being a filtered
starting vector: $\vvv = \quadf_0(M)\vvv_0$ for some randomly
chosen $\vvv_0$. 
The matrices $f^k(M)$, $k \ge 1$, approximate the $k$-th moment matrices. In contrast,
FEAST is fundamentally a subspace iteration eigensolver. It uses subspaces 
spanned by sets of the form
$\quadf^k(M) V$. Note that $\quadf^k(M)$ does not approximate the $k$-th moment matrix
at all, but rather approximates the spectral projector progressively better as
$k$ advances.

\VS
Standard theory in the literature readily establishes FEAST's convergence as soon as
we identify it as an accelerated subspace iteration. Nevertheless, to fully understand
the algorithm's fast convergence and possible modes of failure, detailed analysis
specific to the use FEAST's accelerator is carried out in this paper.
Furthermore, this analysis allows us to improve
the robustness of the original algorithm that was proposed in~\cite{polizzi-2009}.
The resulting enhancements include estimation of the number of eigenvalues in
the segment of interest, 
and evaluation of whether the dimension chosen for the subspaces is appropriate.
This paper puts FEAST on a more solid foundation.
Finally, we outline at the end of this paper how FEAST can be extended to handle 
non-Hermitian problems.


\section{Overview}

\label{sec:overview}

Throughout this paper, we consider two $n\times n$ Hermitian matrices $A$
and $B$ where $B$ is positive definite; that is, $B = \ctrans{C} C$
for some invertible matrix $C$ where $\ctrans{C}$ denotes 
the complex-conjugate transposition of $C$. 
We state some well-known properties germane to our presentation.
There exists an $n\times n$ $B$-orthogonal matrix $X$, $\ctrans{X} B X = I$,
such that $A X = B X \Lambda$ where $\Lambda$ is a real diagonal matrix.
Each diagonal entry $\lambda$ of $\Lambda$ together with its corresponding
vector $\vvx$ of $X$ constitute an eigenpair $(\lambda,\vvx)$:
$A \vvx = \lambda B \vvx \iff (\inverse{B}A)\vvx = \lambda \vvx$. 
Determining eigenpairs for the generalized problem given by $(A,B)$ 
is equivalent to determining eigenpairs for the single matrix 
$\inverse{B}A$. Moreover, 
$\ctrans{X}BX = I$ implies $\inverse{X} = \ctrans{X}B$ and
\begin{equation}
\label{eqn:eigen_decomposition}
\surM \bydef \inverse{B}A = X \, \Lambda \, \inverse{X} = X \, \Lambda \, \ctrans{X} B.
\end{equation}

This paper focuses on the following problem. Given an interval $\calI = \lambintc$
on the real line, determine all $\lambnum$ (counting multiplicities) 
eigenpairs $(\lambda,\vvx)$, 
$\surM \vvx = \lambda \vvx$, where $\lambda \in \calI$.

The following is a simple variant of Algorithms 5.3 and 7.5 in~\cite{saad-eigenvalue-problems-2011}.
It is a subspace iteration algorithm with projection
that also uses an accelerator $\rho(\surM)$.
\begin{algorithm}[h!]
{\it Algorithm A} (Accelerated Subspace Iteration with Rayleigh-Ritz)
\begin{algorithmic}[1]
\State Pick $p$ random $n$-vectors $\iterate{Q}{0} = [\vvq_1, \vvq_2, \ldots, \vvq_p]$.
       Set $k \gets 1$.
\Repeat
\State Approximate subspace projection: 
       $\iterate{Y}{k} \gets \quadf(\surM) \cdot \iterate{Q}{k-1}$.
\State Form reduced system: $\iterate{\Ared}{k} \gets \iterate{\ctrans{Y}}{k} A \iterate{Y}{k}$,
       $\iterate{\Bred}{k} \gets \iterate{\ctrans{Y}}{k} B \iterate{Y}{k}$. 
\State Solve $p$-dimension eigenproblem: 
       $\iterate{\Ared}{k} \iterate{\Xred}{k} = \iterate{\Bred}{k} \iterate{\Xred}{k} \iterate{\Lamred}{k}$ 
       for $\iterate{\Lamred}{k}$, $\iterate{\Xred}{k}$.
\State Set $\iterate{Q}{k} \gets \iterate{Y}{k} \iterate{\Xred}{k}$, in particular 
       $\iterate{\ctrans{Q}}{k} \, B \, \iterate{Q}{k} = I_p$.
\State $k \gets k + 1$.
\Until {Appropriate stopping criteria}
\end{algorithmic}
\end{algorithm}

Without acceleration, that is, $\quadf(\surM) = \surM$, Algorithm A corresponds simply to
straightforward subspace iteration with the Rayleigh-Ritz procedure. 
If we denote by $X_\calI$ the set of columns from $X$ corresponding to the eigenvectors of interest,
then the choice $\quadf(\surM) = \projector$, the spectral projector to
the invariant subspace spanned by $X_\calI$, is an ideal accelerator.
Algorithm A converges in one iteration if $p$ is chosen to be $\lambnum$
and $\iterate{Y}{1} = \quadf(\surM)\iterate{Q}{0}$ happens to have full rank. 
The reason is that, under these assumptions, 
$\iterate{Y}{1} = X_\calI \inverse{W}$ for some invertible
$W \in \complex^{\lambnum \times \lambnum}$. This leads to 
$$
\iterate{\Ared}{1} = \ctrans{(\inverse{W})} \,\Lambda_\calI \, \inverse{W}, \quad 
{\rm and} \quad
\iterate{\Bred}{1} = \ctrans{(\inverse{W})} \,\inverse{W},
$$
where $\Lambda_\calI$ is a diagonal matrix 
whose diagonal entries are exactly the eigenvalues of interest
$\lambda\in\calI$, counting multiplicities.  
One can show that the Ritz values and vectors are indeed the eigenpairs of interest.

\VS
While the (exact) spectral projector $\projector$ is not readily available, 
it turns out that $\quadf(\surM)$ approximates it quite well when
$\quadf(\mu)$ is a rational function constructed via a Gauss-Legendre quadrature.
With this accelerator, Algorithm A is exactly the FEAST algorithm as stated 
in~\cite{polizzi-2009}.

\VS
In the following sections, we analyze FEAST's convergence behavior.
\begin{itemize}
\item
Section~\ref{sec:asp_via_quad} constructs a rational function 
$\quadf:\complex \rightarrow \complex$ for a specified 
$\calI = \lambintc$. The properties of the function $\quadf(\mu)$
for $\mu$ restricted on the real line are studied.
These properties explain why and in what sense 
the matrix function $\quadf(\surM)$ approximates a spectral projector.

\item
Section~\ref{sec:subspace_iteration} establishes 
that the distances from an eigenvector of interest to $\spann(\iterate{Q}{k})$ 
converge to zero, where $\iterate{Q}{k}$ is generated according to Algorithm A.
The first theorem there is a straightforward generalization of
Theorem 5.2 from~\cite{saad-eigenvalue-problems-2011}, taking into account
(1) the special properties of $\quadf(\surM)$, and (2) that we are dealing with a 
generalized eigenvalue problem. The second theorem examines the impact on
convergence when the application of $\quadf(\surM)$ to vectors, and in particular
to the $\iterate{Q}{k}$s, contains error. This study is relevant because,
unlike polynomial accelerators, application of $\quadf(\surM)$ involves 
solutions of linear systems (see Section~\ref{sec:asp_via_quad}
for details).

\item
The Rayleigh-Ritz procedure is needed to derive the actual desired eigenpairs from 
merely a basis $\iterate{Q}{k}$ of the subspace 
$\iterate{{\cal Q}}{k} = \spann(\iterate{Q}{k})$
that is close to the desired eigenvectors. Section~\ref{section:eigenproblems}
analyzes the convergence properties of eigenpairs, taking into account the
idiosyncrasies of $\iterate{{\cal Q}}{k}$ due to the use of $\quadf(\surM)$
as accelerator. Some of the consequences of these idiosyncrasies were in fact 
observed in~\cite{kramer-etal-2013}, and now have a satisfactory explanation.
We also show that eigenvalues of $\iterate{\Bred}{k}$ offer accurate estimates of 
$\lambnum$, the number of eigenvalues
inside $\calI$. These properties can be exploited in an enhanced version of FEAST.

\item 
Section~\ref{sec:numerical_experiments} 
presents a number of computational examples to illustrate key aspects of the 
preceding analysis as well as numerical subtleties.

\end{itemize}


\section{Approximate Spectral Projector $\quadf(\surM)$}
\label{sec:asp_via_quad}

Given an interval $\calI = \lambintc$ on the real line, $\lambdamin < \lambdamax$, we
will construct a rational function $\quadf: \complex \rightarrow \complex$
such that $\quadf(\mu) \in \reals$ for $\mu\in\reals$,
and that the function $\quadf(\mu)$ restricted on the real line 
is a good approximation to
the indicator function of $\calI$.
To accomplish this, we use a Cauchy integral representation of the indicator
function and construct $\quadf(\mu)$ based on a numerical quadrature rule.

\subsection{Construction of $\quadf(\mu)$}
\label{sec:quadf_construction}

Let $\boundary$ be the circle centered at $c = (\lambdamax+\lambdamin)/2$ with
radius $r = (\lambdamax-\lambdamin)/2$. Define the function
$\pi(\lambda)$ by the contour integral (in the counter clockwise direction)
\begin{equation}
\label{eqn:cauchy_contour}
\pi(\mu) = \ConInt{ \frac{1}{z - \mu} },
\qquad \mu \notin \boundary.
\end{equation}
The Cauchy integral theorem shows that $\pi(\mu) = 1$ for $|\mu - c| < r$
and $\pi(\mu) = 0$ for $|\mu-c| > r$. 
We use a numerical quadrature to
approximate the integral in Equation~\ref{eqn:cauchy_contour}.
To this end, we define the parametrization 
$\phi(t)$, $-1 \le t \le 3$:
\begin{equation}
\label{eqn:parametrization}
\phi(t) = c + r \, e^{\iota \frac{\pi}{2}(1+t)}, \quad {\rm and} \quad
\phi'(t) = \iota \frac{\pi}{2} r \, e^{\iota \frac{\pi}{2}(1+t)}.
\end{equation}
Thus,
\begin{eqnarray}
\label{eqn:cauchy_parametrized}
\pi(\mu) 
 & = & \frac{1}{2\pi\iota} \int_{-1}^3 \frac{\phi'(t)}{\phi(t)-\mu} \, dt, \nonumber    \\
 & = & \frac{1}{2\pi\iota} \left[
                   \int_{-1}^1 \frac{\phi'(t)}{\phi(t)-\mu} \, dt + 
                   \int_{-1}^1 \frac{\phi'(2-t)}{\phi(2-t)-\mu} \, dt 
                   \right], \nonumber \\
 & = & \frac{1}{2\pi\iota} \int_{-1}^1 \left[
                   \frac{\phi'(t)}{\phi(t)-\mu} \; - \;
                   \frac{\overline{\phi'(t)}}{\overline{\phi(t)}-\mu} \right] \, dt.
\end{eqnarray}
We restrict ourselves to Gauss-Legendre quadratures on $[-1,1]$ (see
for example~\cite{stoer-bulirsch-2010}).
A $q$-point Gauss-Legendre quadrature rule is defined by a set of
weight-node pairs $(w_k,t_k)$, $k=1,2,\ldots,q$, where $w_k > 0$ and $-1 < t_k < 1$. 
The set is symmetric in that both $(w_k,t_k)$ and $(w_k,-t_k)$ are present.
The choice of the weight-node pairs are meant to make $\sum_{k=1}^q w_k f(t_k)$
approximate $\int_{-1}^1 f(t)\,dt$ well for continuous function $f:[-1,1]\rightarrow\complex$.
Moreover, for any polynomial $f$ of degree at most $2q-1$, the $q$-term summation 
produces the exact integral. In particular, $\sum_{k=1}^q w_k = 2$ (by taking $f \equiv 1$).

\VS
In a usual setting, a quadrature aims at producing a single value that approximates a specific
definite integral of an integrand. Here, it corresponds to approximating $\pi(\mu)$ for a specific
fixed $\mu$. But if we use the same quadrature rule for all possible $\mu$, we have in fact
defined a function of $\mu$. This is how we define our $\quadf(\mu)$. Let
$(w_k,t_k)$, $k=1,2,\ldots,q$, be the $q$-point Gauss-Legendre rule of choice.
We define the function $\quadf(\mu)$ by the quadrature formula applied to the integral
of Equation~\ref{eqn:cauchy_parametrized}:
\begin{equation}
\label{eqn:quadf}
\quadf(\mu) 
  \bydef  
     \frac{1}{2\pi\iota} \sum_{k=1}^q \left( 
     \frac{w_k \phi'(t_k)}{\phi(t_k)-\mu} -
     \frac{w_k \overline{\phi'(t_k)}}{\overline{\phi(t_k)}-\mu}\right)
  =  \sum_{k=1}^q \left( 
            \frac{\omega_k}{\phi_k - \mu} + 
            \frac{\overline{\omega_k}}{\overline{\phi_k}-\mu} \right),
\end{equation}
$\phi_k = \phi(t_k)$ and $\omega_k = w_k \phi'(t_k)/(2\pi\iota)$.
Note that $\quadf:\complex \rightarrow \complex$ is a rational function
in partial fraction form. The $2q$ poles of $\quadf(\mu)$ are
$\phi_k$ and $\overline{\phi_k}$ for $k = 1,2,\ldots,q$. Because
$-1 < t_k < 1$, these poles are all complex valued. Consequently,
$\quadf(\mu)$ is defined for all $\mu\in\reals$. From 
Equation~\ref{eqn:quadf}, $\quadf(\mu) = \overline{\quadf(\mu)}$
for $\mu\in\reals$. Thus $\quadf(\mu)\in\reals$ for $\mu\in\reals$.

\subsection{Computing $\quadf(\surM)Q$}
\label{sec:quadfQ_computation}

Consider our matrix $\surM = \inverse{B}A$
and a function $f(x) = \alpha/(\beta - x)$,
$\alpha,\beta$ constant and $\beta I - M$ is invertible. It is common to
define the function $f$ of $M$, $f(M)$, as the matrix 
$\alpha\,\inverse{(\beta I - M)}$ (see page 1 of~\cite{higham-book-2008}).
Since $M$ is diagonalizable, $M = X\,\Lambda\,\inverse{X}$, 
\begin{eqnarray}
\label{eqn:f_of_M_def}
f(M) 
 & \bydef & \alpha\,\inverse{(\beta I - M)},    \\
 & = & \alpha\,\inverse{(\beta X \inverse{X} - X\,\Lambda\,\inverse{X})}, \nonumber \\
 & = & \alpha\, X\,\inverse{(\beta I - \Lambda)}\,\inverse{X}, \nonumber \\
\label{eqn:f_of_M_eigen}
 & = & X \, f(\Lambda) \, \inverse{X},
\end{eqnarray}
where $f(\Lambda)$ is the standard definition
of a function of a diagonal matrix: namely replacing each diagonal entry $\lambda$
of $\Lambda$ with $f(\lambda)$. Clearly, for each eigenpair
$(\lambda,\vvx)$ of $M$, $(f(\lambda),\vvx)$ is an eigenpair of $f(M)$.

\VS
As none of the $\phi_k$s are on the real line while $\surM$'s eigenvalues
are all real, $\phi_k I - \surM$, $\overline{\phi_k}I-\surM$,
$k=1,2,\ldots,q$, are all invertible. Following Equation~\ref{eqn:f_of_M_def},
we have
\begin{eqnarray*}
\quadf(\surM)   & =  &
\sum_{k=1}^q \omega_k \inverse{(\phi_k I - \surM)} +
\sum_{k=1}^q \overline{\omega_k}\inverse{(\overline{\phi_k} I - \surM)}, \\
 & = &
\sum_{k=1}^q \omega_k \inverse{(\phi_k B - A)} B +
\sum_{k=1}^q \overline{\omega_k}\inverse{(\overline{\phi_k} B - A)} B.
\end{eqnarray*}
Therefore, for any $Q \in \complex^{n \times p}$, 
\begin{equation}
\label{eqn:quadrature-as-linear-systems}
\begin{array}{l l l}
\quadf(\surM) Q  & =  & 
\sum_{k=1}^q \omega_k \inverse{(\phi_k B - A)} B Q +
\sum_{k=1}^q \overline{\omega_k}\inverse{(\overline{\phi_k} B - A)} B Q, \quad \hbox{in general},  \\
 & = & 
 2 \sum_{k=1}^q \realpart\left(\omega_k \inverse{(\phi_k B - A)} B Q \right), \quad
  \hbox{if $A$, $B$, and $Q$ are real valued.}
\end{array}
\end{equation}
Application of $\quadf(\surM)$ to $Q$ involves, in general, solutions of $2q$ linear systems
of equations with $p$ right-hand-sides each, but $q$ linear systems only if $A$, $B$, and $Q$
are all real matrices.

\VS
Substituting $\quadf$ for $f$ in Equation~\ref{eqn:f_of_M_eigen} gives
\begin{equation}
\label{eqn:quadf_eigen_decomposition}
\quadf(\surM) = X \quadf(\Lambda) \inverse{X} = 
                X \quadf(\Lambda) \ctrans{X} B 
\end{equation}
because $M = X \Lambda \inverse{X} = X \Lambda \ctrans{X} B$.
This implies that $(\quadf(\lambda),\vvx)$ is an eigenpair of $\quadf(\surM)$
for any eigenpair $(\lambda,\vvx)$ of $\surM$.
Suppose $\quadf(\lambda) = 1$ for all the $\lambnum$
eigenvalues $\lambda$ of $\surM$ that lie inside $\calI = \lambintc$ 
and $\quadf(\lambda) = 0$ for all those $n-\lambnum$ that lie outside,
then $\quadf(\surM)$ is in fact the exact spectral projector 
$\projector$. In general, for any $n$-vector $\vvq$,
\begin{equation}
\label{eqn:quadf_preserves_eigenvectors}
\vvq = \sum_{\lambda\in{\rm eig}(\surM)} \alpha_\lambda \vvx_\lambda
\implies
\quadf(\surM) \vvq = \sum_{\lambda\in {\rm eig}(\surM)} \alpha_\lambda \quadf(\lambda) \vvx_\lambda.
\end{equation}
Suppose the scalar function $\quadf(\mu)$ approximates the indicator function well in the
sense that $\quadf(\lambda) \approx 1$ for eigenvalues $\lambda$ inside $\calI$
and $|\quadf(\lambda)| \ll 1$ for those eigenvalues $\lambda$ outside of $\calI$.
Then $\quadf(\surM)$ approximates the behavior of the exact projector $\projector$:
$\quadf(\surM)\vvq$ leaves almost invariant the component of $\vvq$ in $\spann(X_\calI)$
while almost annihilating the component of $\vvq$ in the complementary eigenspace.
We will now study more closely how well $\quadf(\mu)$ approximates the indicator function.

\subsection{Properties of $\quadf(\mu)$ and $\quadf(\surM)$}
\label{sec:quadf_properties}

As the spectrum of $\surM$ is real and $\quadf(\surM) = X \quadf(\Lambda) \inverse{X}$,
it suffices to study $\quadf(\mu)$ for $\mu\in\reals$. As noted previously,
$\quadf(\mu)\in\reals$ for $\mu\in\reals$. Moreover, it suffices to study the
reference function $\quadf_\reference(\mu)$ that corresponds to the interval $[-1,1]$.
This is because a general $\quadf(\mu)$ that corresponds to $\calI=\lambintc$ with
center $c$ and radius $r$ is given by the simple relationship
$\quadf(\mu) = \quadf_\reference( (\mu-c)/r )$ due to our choice of
parametrization (Equation~\ref{eqn:parametrization}).
Equation~\ref{eqn:quadf} shows that for $\mu \in \reals$,

\begin{eqnarray}
\label{eqn:quadf_sin}
\quadf_\reference(\mu)
  & = & \frac{1}{2} \sum_{k=1}^q w_k \realpart\left( 
            \frac{\phi_k}{\phi_k - \mu} \right),  \nonumber \\
  & = & \frac{1}{2} \sum_{k=1}^q w_k\; 
       \frac{
        1 + \mu s_k
       }
       {
        1 + 2 \mu s_k + \mu^2
       }\;, \qquad s_k = \sin(\pi t_k/2).
\end{eqnarray}
As noted previously, for each weight-node pair $(w_k,t_k)$
where $t_k > 0$, there is a pair $(w_{k'},t_{k'})$
where $w_{k'}=w_k$ and $t_{k'} = -t_k$. Note also
that $s_k = \sin(\pi t_k/2)$, and thus summing the pair
$$
w_k 
       \frac{
        1 + \mu s_k
       }
       {
        1 + 2 \mu s_k + \mu^2
       }
 +
w_{k'} 
       \frac{
        1 + \mu s_{k'}
       }
       {
        1 + 2 \mu s_{k'} + \mu^2
       }
=
w_k \left( 
       \frac{
        1 + \mu s_k
       }
       {
        1 + 2 \mu s_k + \mu^2
       }
 +
       \frac{
        1 - \mu s_k
       }
       {
        1 - 2 \mu s_k + \mu^2
       }
\right)
$$
yields an even function. For $t_k = s_k = 0$,
$$
w_k
       \frac{
        1 + \mu s_k
       }
       {
        1 + 2 \mu s_k + \mu^2
       }
 = \frac{w_k}{1 + \mu^2}
$$
is also an even function. As a result, $\quadf_\reference(\mu)$ is
an even function. It suffices to study $\quadf_\reference(\mu)$ for
$\mu \ge 0$.

\VS
Before we present proofs on several properties of $\quadf_\reference(\mu)$,
let us examine some illustrative figures. Figure~\ref{figure:gaussian_quad_8}
suggests that for a reference interval $\calI = [-1,1]$ and the quadrature rule
choice of $q=8$, eigen-components
that correspond to eigenvalues $|\lambda| \ge 1.6$ will be attenuated
by roughly 4 or more orders of magnitudes.
The Figure also suggests that $\quadf_\reference(\mu) \ge 1/2$
for $\mu \in \calI$ while $|\quadf_\reference(\mu)| \le 1/2$ for $|\mu| \ge 1$.
Figure~\ref{figure:gaussian_quad_2_to_12} illustrates the attenuation properties
of six specific Gauss-Legendre quadrature rules.

\begin{figure}
\includegraphics[width=2.4in]{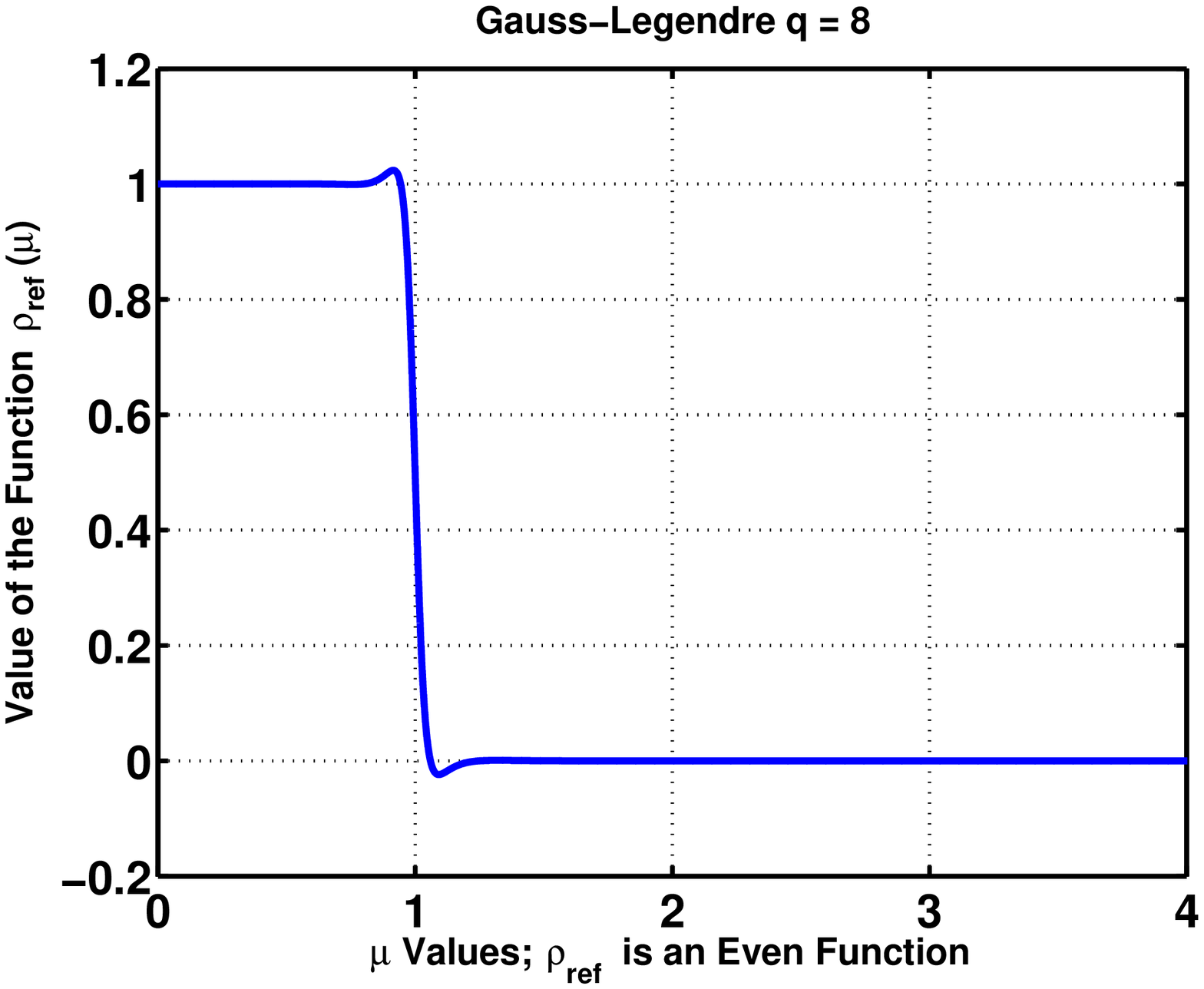}
\hspace{0.2in}
\includegraphics[width=2.4in]{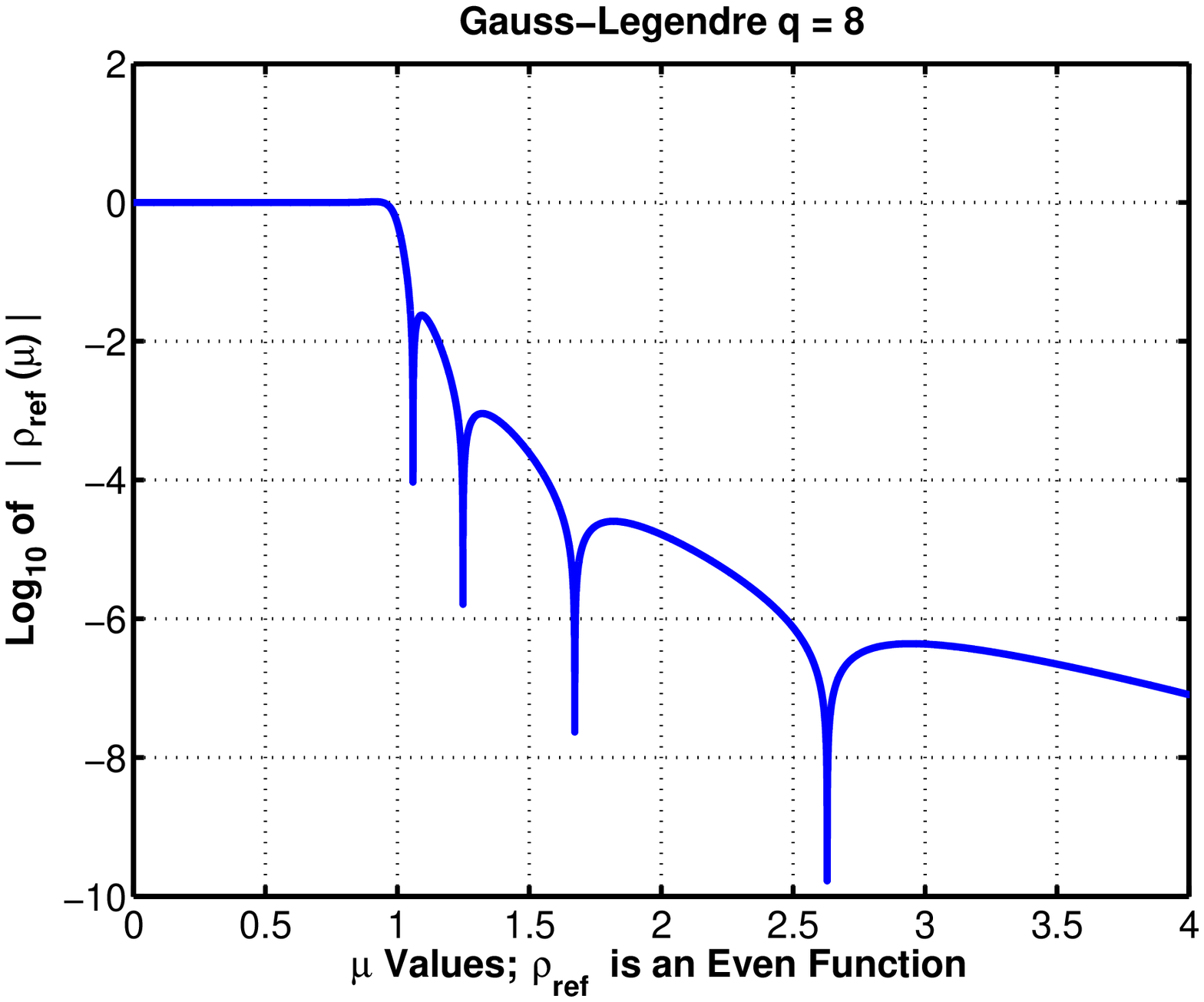}
\caption{
An ideal $\quadf_\reference(\mu)$ function is $1$ on $[-1,1]$ and
$0$ elsewhere. This figure shows that $\quadf_\reference(\mu)$ obtained
by Gauss-Legendre approximates this ideal well. We do not show
$\quadf_\reference(\mu)$ for $\mu < 0$ because
$\quadf_\reference(-\mu) = \quadf_\reference(\mu)$. The left picture
shows the general shape of the function, while the logarithmic scale 
on the right illustrates how $|\quadf_\reference(\mu)|$ gets very small very
soon beyond the reference $\lambintc$ of $[-1,1]$.}
\label{figure:gaussian_quad_8}
\end{figure}

\begin{figure}
\includegraphics[width=2.4in]{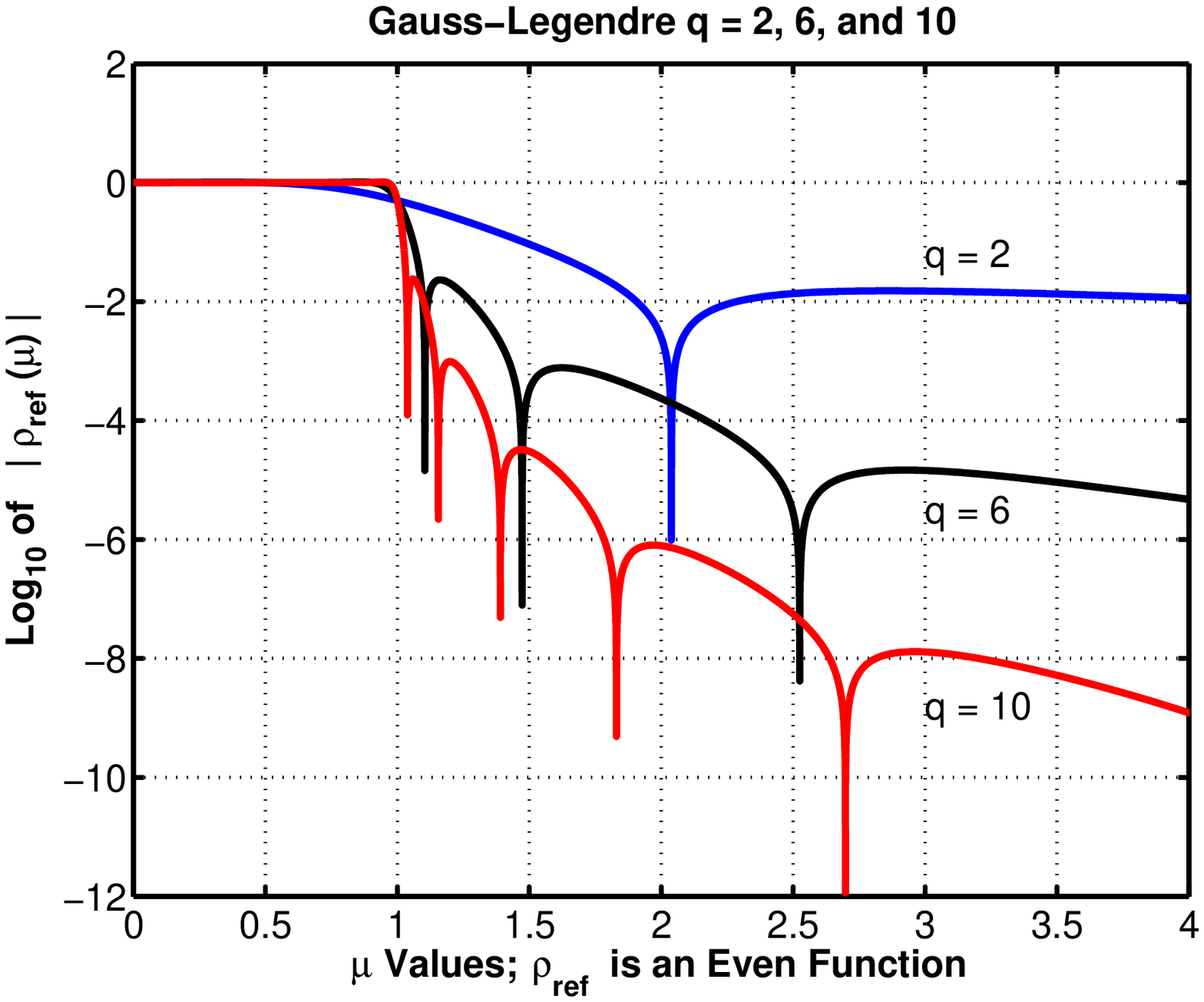}
\hspace{0.2in}
\includegraphics[width=2.4in]{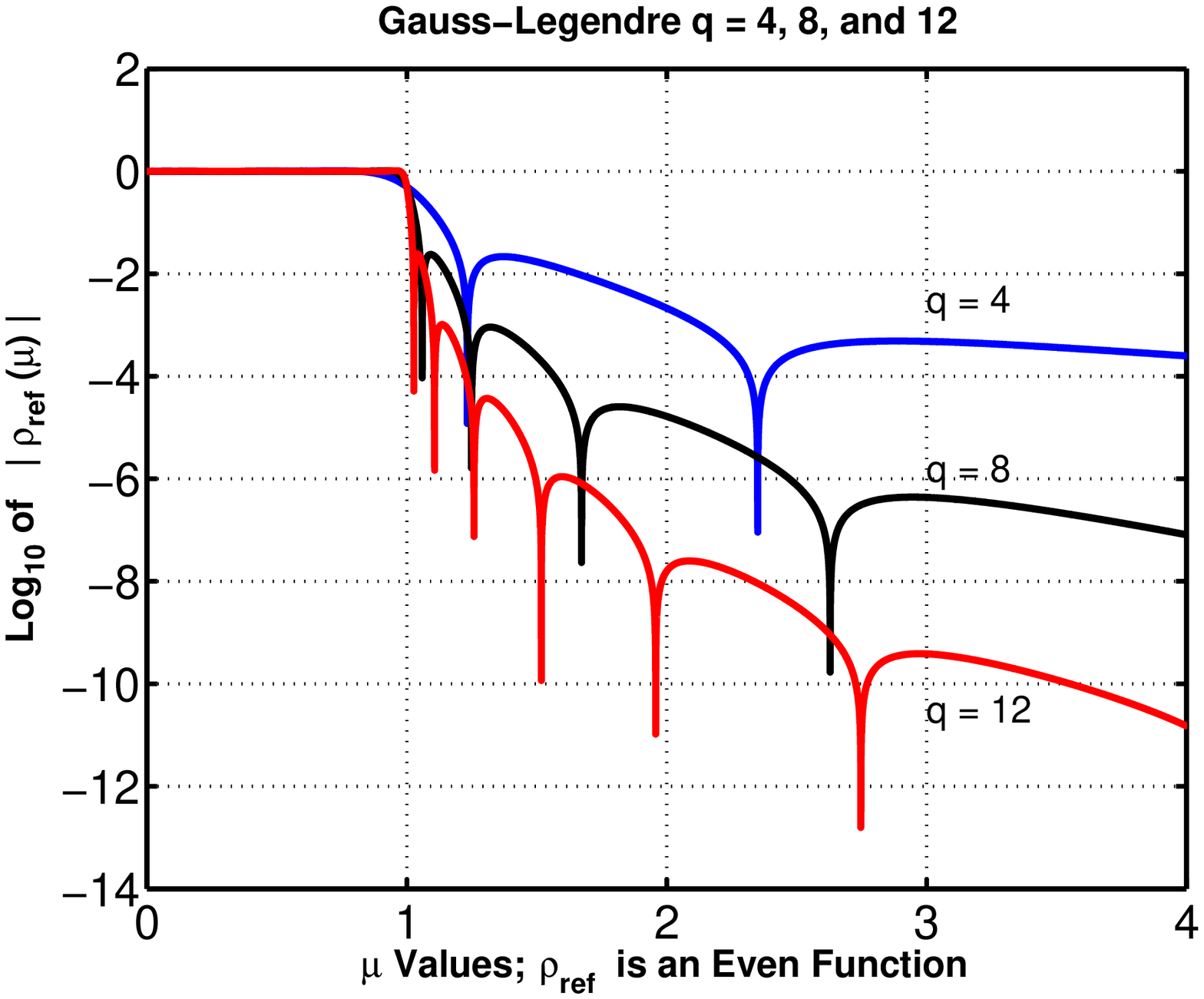}
\caption{
This figure illustrates the decay properties of $|\quadf_\reference(\mu)|$
of different degree $q$ of Gauss-Legendre quadratures.
The $Y$-axes are in logarithmic scale.
}
\label{figure:gaussian_quad_2_to_12}
\end{figure}

\begin{theorem}
\label{thm:quadf_properties}
For a Gauss-Legendre quadrature of any choice $q \ge 1$,
and the parametrization $\phi(t) = e^{-\eye \frac{\pi}{2}(1+t)}$,
the followings hold.
\begin{enumerate}
\item 
$\quadf_\reference(0) = 1$ and $\quadf_\reference(1) = 1/2$.
\item
$\quadf'_\reference(1) < 0$.
\item
$\quadf_\reference(\mu) \ge 1/2$ for $\mu \in [-1,1]$.
\end{enumerate}
\end{theorem}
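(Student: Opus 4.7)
The plan is to work throughout with the real, closed-form expression in Equation~\ref{eqn:quadf_sin}, exploiting two structural facts already established: the Gauss-Legendre nodes come in symmetric pairs $(w_k,t_k)$ and $(w_k,-t_k)$ with identical positive weights, and $\sum_k w_k = 2$. It will be convenient to set $g_k(\mu) = (1+\mu s_k)/(1+2\mu s_k+\mu^2)$, so that $\quadf_\reference(\mu) = \frac{1}{2}\sum_k w_k g_k(\mu)$. Note in passing that each $g_k(\mu)$ is well-defined on $\reals$, since $1+2\mu s_k + \mu^2 = (1+\mu s_k)^2 + \mu^2(1-s_k^2) > 0$ because $|s_k|<1$.

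Parts (1) and (2) are direct. For (1), $g_k(0)=1$ immediately gives $\quadf_\reference(0) = \frac{1}{2}\sum_k w_k = 1$; at $\mu=1$, $g_k(1) = (1+s_k)/(2(1+s_k)) = 1/2$ after legal cancellation, so $\quadf_\reference(1) = 1/2$. For (2), a short quotient-rule computation gives $g_k'(1) = -1/(2(1+s_k))$, which is strictly negative since $s_k > -1$; since each $w_k > 0$, the identity $\quadf'_\reference(1) = \frac{1}{2}\sum_k w_k g_k'(1)$ delivers a strictly negative number.

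Part (3) is the substantive step. I would group each $g_k(\mu)$ with $s_k>0$ together with its symmetric partner $g_{k'}(\mu)$, where $s_{k'}=-s_k$ and $w_{k'}=w_k$, and treat any singleton $s_k = 0$ (possible only when $q$ is odd) separately. Placing the pair over the common denominator $(1+\mu^2)^2 - 4\mu^2 s_k^2$, a short manipulation collapses $g_k(\mu)+g_{k'}(\mu)$ to $2(1+\mu^2-2\mu^2 s_k^2)/((1+\mu^2)^2 - 4\mu^2 s_k^2)$. The denominator is positive on $[-1,1]$, since $(1+\mu^2)^2 - 4\mu^2 s_k^2 \ge 4\mu^2(1-s_k^2)$ with $|s_k|<1$, and equals $1$ at $\mu=0$. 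Clearing it, the inequality $g_k(\mu) + g_{k'}(\mu) \ge 1$ reduces to $2(1+\mu^2) \ge (1+\mu^2)^2$, i.e., to $\mu^2 \le 1$. A singleton term, when present, contributes $1/(1+\mu^2) \ge 1/2$ on $|\mu|\le 1$ by inspection. Weighting the pair and singleton inequalities by $w_k$, summing, and using $\sum_k w_k = 2$ yields $\sum_k w_k g_k(\mu) \ge 1$; division by $2$ gives $\quadf_\reference(\mu) \ge 1/2$. The only non-routine point is spotting the symmetric pairing and the right algebraic grouping; once the pair-sum is written in the form above, the reduction to the elementary inequality $\mu^2\le 1$ is immediate.
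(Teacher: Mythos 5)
Your proof is correct. Parts (1) and (2) coincide with the paper's argument essentially verbatim (same use of $\sum_k w_k = 2$, same computation $g_k'(1) = -1/(2(1+s_k))$). Part (3), however, takes a genuinely different route. The paper proves the stronger pointwise statement that \emph{each} $g_k(\mu) \ge 1/2$ on $[0,1]$, by a calculus argument: for $s_k \ge 0$ it shows $g_k' \le 0$ on $[0,1]$ so $g_k$ is decreasing there, and for $s_k < 0$ it analyzes the sign changes of $g_k'$ (a convex quadratic numerator with minimum at $|s_k|^{-1} > 1$) to conclude that $g_k$ attains its minimum over $[0,1]$ at an endpoint, where $g_k(0)=1$ and $g_k(1)=1/2$. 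You instead exploit the symmetry of the Gauss--Legendre nodes, pair $g_k$ with $g_{k'}$ ($s_{k'}=-s_k$, $w_{k'}=w_k$), and verify algebraically that $g_k(\mu)+g_{k'}(\mu) = 2(1+\mu^2-2\mu^2 s_k^2)/\bigl((1+\mu^2)^2-4\mu^2 s_k^2\bigr) \ge 1$ reduces to $\mu^2 \le 1$; I checked the algebra and it is right, and the singleton $s_k=0$ case is handled by inspection. Your approach is purely algebraic, avoids the case analysis on the sign of $s_k$, and establishes the bound on all of $[-1,1]$ at once without invoking evenness; the paper's approach buys the sharper per-term information $\min_{[0,1]} g_k = g_k(1)$, which makes the identification of $\mu=\pm 1$ as the minimizer of $\quadf_\reference$ on $[-1,1]$ immediate. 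One cosmetic remark: your justification of the denominator's positivity via the bound $4\mu^2(1-s_k^2)$ degenerates at $\mu=0$ and forces the special-case remark; it is cleaner to observe that the common denominator is the product $(1+2\mu s_k+\mu^2)(1-2\mu s_k+\mu^2)$ of two quantities you have already shown to be strictly positive.
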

\begin{proof}
Let $(w_k,t_k)$, $k=1,2,\ldots,q$, be the weight-node pairs. Recall 
Equation~\ref{eqn:quadf_sin}
$$
\quadf_\reference(\mu)
   =   \frac{1}{2} \sum_{k=1}^q w_k\; 
       \frac{
        1 + \mu s_k
       }
       {
        1 + 2 \mu s_k + \mu^2
       }\;, \qquad s_k = \sin(\pi t_k/2).
$$
As noted before, $\sum_{k=1}^q w_k = 2$. Thus
$$
\quadf_\reference(0) = \frac{1}{2}\sum_{k=1}^q w_k = 1, \quad
{\rm and} \quad
\quadf_\reference(1) = \frac{1}{2}\sum_{k=1}^q w_k \frac{1}{2} = 
\frac{1}{2}.
$$
This establishes (1).

\VS
Next, note that $-1 < t_k < 1$ implies $-1 < s_k < 1$
and $1 - s_k^2 > 0$. Thus,
\begin{eqnarray*}
\quadf_\reference(\mu)
 & = & 
       \frac{1}{2} \sum_{k=1}^q w_k\; 
       \frac{
        1 + \mu s_k
       }
       {
        (\mu + s_k)^2 + c_k^2 
       },
       \quad c_k^2 = 1 - s_k^2, \\
 & = &
      \frac{1}{2} \sum_{k=1}^k w_k g_k(\mu), \quad
      g_k(\mu) \bydef 
       \frac{
        1 + \mu s_k
       }
       {
        (\mu + s_k)^2 + c_k^2 
       }.
\end{eqnarray*}
Each of the $g_k(\mu)$s is a rational function with a strictly positive denominator. 
Simple differentiation shows
$$
g_k'(\mu) 
 = -\frac{s_k + 2\mu + s_k \mu^2}{[(\mu+s_k)^2 + c_k^2]^2}.
$$
Hence 
$$
g_k'(1) = -\frac{1}{2(1+s_k)} < 0
\quad \hbox{for all $k$}
\implies
\quadf'_\reference(1) = 
\frac{1}{2} \sum_{k=1}^q w_k g'_k(1) < 0.
$$
This establishes (2).

\VS
Finally, the formula for $g'_k(\mu)$ shows that if $s_k \ge 0$, then
$g'_k(\mu) \le 0$ for
all $\mu\in [0,1]$. Therefore, 
for those $k$s where $1 > s_k \ge 0$,
$g_k(\mu)$ are decreasing functions on $[0,1]$ and
$g_k(\mu) \ge g_k(1) = 1/2$ for $\mu \in [0,1]$. 
If $s_k < 0$,
$$
g_k'(\mu) 
 = \frac{|s_k|\mu^2 - 2\mu + |s_k|}{[(\mu+s_k)^2 + c_k^2]^2}.
$$
The numerator is a convex quadratic that attains its minimum at 
$\mu = |s_k|^{-1} > 1$.
Thus $g'_k(\mu)$ changes sign at most once in $[0,1]$. But $g'_k(0) = |s_k| > 0$
and $g'_k(1) < 0$ together imply that $g'_k(\mu)$ changes sign exactly once
in $[0,1]$. This means that $g_k(\mu)$ attains a maximum in the interior
$(0,1)$ and attains its minimum in $[0,1]$ at either $\mu = 0$ or $\mu = 1$.
However, $g_k(0) = 1$ while $g_k(1) = 1/2$. Thus we must have
$g_k(\mu) \ge 1/2$ for those $k$s that $s_k < 0$. In conclusion, because
$w_k > 0$ and $\min_{\mu \in [0,1]}g_k(\mu) = g_k(1)$ for all $k$, 
$\quadf_\reference(\mu) \ge \quadf_\reference(1) = 1/2$. This establishes
(3) and completes the proof.
\end{proof}

\VS
Theorem~\ref{thm:quadf_properties} shows that 
the matrix $\quadf(\surM)$,
$\quadf(\mu) = \quadf_\reference((\mu-c)/r)$,
 when applied to a vector
preserves rather well those eigencomponents corresponding to eigenvalue
in $\calI$: 
those components are never attenuated by a factor smaller than 1/2.
The fact that $\quadf'_\reference(1) < 0$ shows that
$\quadf_\reference(\mu)$ decreases from 1/2 at $\mu=1$ as $\mu$ gets
larger than $1$, as illustrated by Figure~\ref{figure:gaussian_quad_8}.
As $\quadf_\reference(\mu)$ is a rational function without poles on the real
line, a dense numerical sampling on a finite interval $[0,L]$ gives
a good description of $\quadf_\reference(\mu)$
for $|\mu| \le L$. The characteristics of action of
$\quadf(\surM)$ vectors for eigencomponents corresponding to eigenvalues
in $c + \alpha L$, $|\alpha| \le r$ can be well understood. 
However, we must ensure
that $|\quadf_\reference(\mu)|$ be small as $|\mu| \rightarrow \infty$
so that $\quadf(\surM)$ does indeed attenuate all eigencomponents that
are not of interest. One cannot study the behavior of $\quadf_\reference(\mu)$
on the entire real line by numerical sampling alone. The next theorem shows
that there exists a decreasing function $\quadfbdd(\mu)$ on
$\mu \in (1,\infty)$ such that $|\quadf_\reference(\mu)| \le \quadfbdd(\mu)$
for all $\mu > 1$. Furthermore, $\quadfbdd(\mu)$ can be evaluated numerically.
Thus we can evaluate $\quadfbdd(\mu)$ at any point $\mu = L$ to yield a 
bound on $|\quadf_\reference(\mu)|$ for all $\mu \ge L$.

\begin{theorem}
\label{thm:quadf_bound}
For a $q$-point Gauss-Legendre quadrature rule, there exists a function
$\quadfbdd_q(\mu)$, $\mu \in (1,\infty)$
such that $\quadfbdd_q(\mu)$ is decreasing in $\mu$ and for any 
$\mu_0 > 1$, $|\quadf_\reference(\mu)| \le \quadfbdd_q(\mu_0)$
for all $\mu \ge \mu_0$.
\end{theorem}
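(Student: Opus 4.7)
The plan is to bound $|\quadf_\reference(\mu)|$ term by term on the reference interval and then take a supremum. From the proof of Theorem~\ref{thm:quadf_properties} we already have
$$\quadf_\reference(\mu) = \frac{1}{2}\sum_{k=1}^q w_k\, g_k(\mu), \qquad g_k(\mu) = \frac{1 + \mu s_k}{(\mu + s_k)^2 + c_k^2},$$
with $s_k = \sin(\pi t_k/2)$ and $c_k = \cos(\pi t_k/2) > 0$. For each $k$ I set $M_k(\mu_0) \bydef \sup_{\mu \ge \mu_0} |g_k(\mu)|$ and define
$$\quadfbdd_q(\mu_0) = \frac{1}{2}\sum_{k=1}^q w_k\, M_k(\mu_0).$$
Each $M_k(\mu_0)$ is a supremum over a set that shrinks as $\mu_0$ grows, hence is nonincreasing in $\mu_0$; the same then holds for $\quadfbdd_q$. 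For any $\mu \ge \mu_0$, the triangle inequality yields $|\quadf_\reference(\mu)| \le (1/2)\sum_k w_k |g_k(\mu)| \le \quadfbdd_q(\mu_0)$. So once I show that each $M_k$ is finite and computable, the theorem is complete.

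To make $M_k$ computable, observe that the poles $\phi_k = -s_k + \eye c_k$ and $\overline{\phi_k}$ lie off the real line (since $c_k > 0$), so $g_k$ is continuous on $[\mu_0,\infty)$; combined with $g_k(\mu) \to 0$ as $\mu \to \infty$, the supremum is attained either at $\mu_0$ itself or at an interior critical point. From $g_k'(\mu) = -(s_k + 2\mu + s_k\mu^2)/[(\mu+s_k)^2 + c_k^2]^2$ (already derived in the previous proof), the critical points are the real roots of $s_k\mu^2 + 2\mu + s_k = 0$, namely $\mu = (-1 \pm c_k)/s_k$ whenever $s_k \neq 0$. A short case split on the sign of $s_k$ finishes the job: for $s_k \ge 0$ both roots are nonpositive, so $g_k$ is strictly monotone on $(\mu_0,\infty)$ and $M_k(\mu_0) = g_k(\mu_0)$; for $s_k < 0$ the roots are $\mu_\pm = (1 \mp c_k)/|s_k|$ with $\mu_- \le 1 < \mu_+$ (by the same $(c_k + |s_k|)^2 \ge 1$ calculation used in the proof of Theorem~\ref{thm:quadf_properties}~(3)), so on $(\mu_0,\infty)$ with $\mu_0 > 1$ the only interior candidate is $\mu_+$, giving $M_k(\mu_0) = \max\{|g_k(\mu_0)|,\, |g_k(\mu_+)|\}$ when $\mu_+ > \mu_0$ and $M_k(\mu_0) = |g_k(\mu_0)|$ otherwise. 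Every quantity is a closed form in $\mu_0$ and $(w_k,t_k)$, so $\quadfbdd_q$ is numerically evaluable at any $\mu_0 > 1$.

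The only real obstacle is the sign-of-$s_k$ case split: for $s_k \ge 0$ the term $g_k$ is monotone past $\mu_0$ and the bound is immediate, but for $s_k < 0$ the function first dips to a negative local minimum at $\mu_+ > 1$ before climbing back to zero from below, which is what forces the maximum against $|g_k(\mu_+)|$. Once this is treated, monotonicity of $\quadfbdd_q$ is automatic from the supremum definition and no further work is needed.
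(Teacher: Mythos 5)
Your proof is correct, but it takes a genuinely different route from the paper's. You bound the rational function term by term: writing $\quadf_\reference(\mu)=\frac12\sum_k w_k g_k(\mu)$ and setting $\quadfbdd_q(\mu_0)\bydef\frac12\sum_k w_k\sup_{\mu\ge\mu_0}|g_k(\mu)|$ makes monotonicity and the tail bound automatic, and your critical-point analysis (both roots of $s_k\mu^2+2\mu+s_k=0$ nonpositive when $s_k\ge 0$; exactly one root exceeding $1$ when $s_k<0$, where $g_k$ dips below zero) correctly reduces each supremum to a closed-form expression --- up to a harmless labeling slip: with $\mu_\pm=(1\mp c_k)/|s_k|$ you in fact have $\mu_+<1<\mu_-$, so the interior candidate is the root $(1+c_k)/|s_k|$. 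The paper instead invokes the Gauss--Legendre error formula: since $\int_{-1}^1 f_\mu(t)\,dt=\pi(\mu)=0$ for $\mu>1$, the quadrature value $\quadf_\reference(\mu)$ \emph{is} the quadrature error, which is bounded by $K_q\max_t|f_\mu^{(2q)}(t)|$ and then by a decreasing function via Fa\`a di Bruno. The trade-off is significant: your bound is far more elementary and easier to verify, but by applying the triangle inequality across quadrature nodes it forfeits the cancellation $\sum_k w_k s_k=0$ that makes $\quadf_\reference$ small; your majorant decays only like $1/\mu_0$ (and is $O(1)$ for $\mu_0$ near $1$), whereas the true function and the paper's bound decay like $1/\mu^2$ and are already tiny just beyond the interval (e.g.\ $\quadfbdd_{10}(30)\approx 10^{-12}$ in Figure~\ref{figure:rho_bound}, versus roughly $10^{-2}$ from your construction). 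Both establish the theorem as literally stated, but the paper's bound is the one that is actually useful for certifying small tail values in conjunction with finite sampling.
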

\begin{proof}
Consider a $q$-point Gauss-Legendre rule. 
$\quadf_\reference(\mu)$ is the value obtained by applying this
quadrature rule to the definite integral
\begin{eqnarray}
\pi(\mu) & = &
\frac{1}{2} \int_{-1}^{1}
\frac{1 + \mu \sin(\pi t/2)}{
      1 + 2 \mu \sin(\pi t/2)  + \mu^2}\; dt, \nonumber \\
\label{eqn:quadf_integrand}
         & = & \int_{-1}^1 f_\mu(t)\, dt, \quad
         f_\mu(t) \bydef 
         \frac{1}{2}
         \frac{1 + \mu \sin(\pi t/2)}{
               1 + 2 \mu \sin(\pi t/2)  + \mu^2}.
\end{eqnarray}
The subscript $\mu$ in $f_\mu$ emphasizes that $\mu$ is
considered as a parameter and $t$ is the independent variable.
For any fixed $\mu > 1$, $1+2\mu\sin(\pi t/2) + \mu^2 > 0$ for all
$t$ and that $f_\mu(t)$ is infinitely differentiable (with respect to t).
The error in Gauss-Legendre rule is well known 
(see for example~\cite{stoer-bulirsch-2010}):
$$
\int_{-1}^1 f_\mu(t)\,dt - \quadf_\reference(\mu) =
\frac{ 
  f_\mu^{(2q)}(t_0)
 }{
   (2q)!}
\int_{-1}^1 P_q^2(t)\,dt
$$
for some $t_0 \in (-1,1)$ and $P_q(t)$ is the monic
Legendre polynomial of degree $q$. It is known that~\cite{abramowitz-stegen}  
the $L^2$-norm of the $q$-degree Legendre polynomial with leading coefficient
$(\Pi_{j=1}^q (2j-1))/q!$ is $(q + 1/2)^{-1/2}$. Therefore
$$
\int_{-1}^1 P_q^2(t)\,dt
 = 
\frac{2^{2q+1} (q!)^4}{(2q+1) ((2q)!)^2} .
$$
Because $\int_{-1}^1 f_\mu(t)\,dt = 0$ for $\mu > 1$, we have
\begin{equation}
\label{eqn:gauss_bound}
|\quadf_\reference(\mu)|
\le 
K_q \, \max_{t\in [-1,1]}
\left|
  f_\mu^{(2q)}(t)
\right|, \qquad
K_q \bydef
\frac{2^{2q+1} (q!)^4}{(2q+1) ((2q)!)^3}.
\end{equation}
We will complete the proof by constructing a function
$$
\quadfbdd_q(\mu) \ge 
K_q \, \max_{t\in [-1,1]}
\left|
  f_\mu^{(2q)}(t)
\right|
$$ 
where $\quadfbdd_q(\mu)$ is decreasing in $\mu$ for $\mu > 1$. 
This construction is via a simple application of the chain rule. Define
$$
F_\mu(s) \bydef
\frac{1}{2} 
         \frac{1 + \mu s}{
               1 + 2 \mu s  + \mu^2},
\qquad {\rm and} \qquad
s(t) \bydef \sin(\pi t/2).
$$
Thus $f_\mu(t) = F_\mu( s(t) )$. Note that $|s| \le 1$
for $|t|\le 1$. Let us first examine the derivative
of $F_\mu$ (with respect to $s$). Simple differentiation shows
$$
F_\mu^{(k)}(s) =
(-1)^{k+1} \, k! \, 2^{k-2} \,
\frac{
\mu^2-1
}{
(1 + 2 s \mu + \mu^2)^{k+1}
}\, .
$$
Since (1) $1 + 2 s \mu + \mu^2$ is positive and increasing
in $\mu \in (1,\infty)$ for any fixed value of $s \in [-1,1]$,
and (2) for any fixed $\mu > 1$, $1 + 2s\mu + \mu^2$ 
for the range $|s| \le 1$ attains
its minimum value of $1-2\mu+\mu^2 = (\mu -1)^2$ at $s = -1$,
we have
$$
\max_{s\in [-1,1]}
 |F_\mu^{(k)}(s)| =
k! \, 2^{k-2} \,
\frac{
\mu^2-1
}{
(\mu-1)^{2(k+1)}
} 
\bydef A_k(\mu).
$$
$A_k(\mu)$ as defined above is a decreasing function in $\mu \in (1,\infty)$
as its derivative is easily seen to be always negative. 
Hence, for any $\mu_0 > 1$
\begin{equation}
\label{eqn:Fprime_bdd}
\max_{s\in [-1,1]}
|F_\mu^{(k)}(s)|  \le A_k(\mu_0)
\qquad \hbox{for all $\mu \ge \mu_0$}.
\end{equation}
Differentiation of $s(t) = \sin(\pi t/2)$ is simple:
$$
| s^{(k)}(t) | = 
\left\{
\begin{array}{l l}
\left(\frac{\pi}{2}\right)^k
|\sin(\pi t/2)|  &  \hbox{$k$ even}, \\
\left(\frac{\pi}{2}\right)^k
|\cos(\pi t/2)|  &  \hbox{$k$ odd}. 
\end{array}
\right.
$$
Note also that, using a half-angle formula,
$$
\max_{t\in [-1,1]} 
| \sin(\pi t/2) \cos(\pi t/2) | =
\max_{t\in [-1,1]} | \sin( \pi t ) /2 | = 1/2.
$$
We can now construct a bound on the derivative of $f_\mu(t)$. 
Apply the chain rule formula for higher derivative 
(often attributed to Fa\`a di Bruno~\cite{johnson-2002})
to the composite function $f_\mu(t) = F_\mu(s(t))$:
\begin{equation}
\label{eqn:diBruno}
\frac{d^n}{dt^n} F_\mu(s(t)) =
\sum_{(k_1,k_2,\ldots,k_n) \in {\cal S}_n}
\frac{n!}{k_1!\, k_2! \, \cdots \, k_n!}\;
F_\mu^{(k)}(s) \;
\Pi_{j=1}^n 
\left(
\frac{s^{(j)}(t)}{j!} 
\right)^{k_j},
\end{equation}
where 
$$
{\cal S}_n = 
\left\{
\; (k_1,k_2,\ldots,k_n) \; | \;
k_j \ge 0,\; 
k_1 + 2 k_2 + 3 k_3 + \cdots + n k_n = n \;
\right\}
$$
is the set of all nonnegative solutions to the Diophantine equation
$k_1 + 2k_2 + \cdots n k_n = n$, and $k$ is defined as 
$k = \sum_{j=1}^n k_j$.
For each solution $(k_1, \ldots, k_n)\in {\cal S}_n$ we define
\begin{eqnarray*}
k_{\rm even} 
 & \bydef & k_2 + k_4 + \cdots + k_{2 \lfloor n/2 \rfloor}, \\
k_{\rm odd}
 & \bydef & (\sum_{j=1}^n k_j) - k_{\rm even} = k - k_{\rm even}, \\
k_{\rm sc} 
 & \bydef & \min\{k_{\rm even}, k_{\rm odd}\}.
\end{eqnarray*}
Therefore,
\begin{eqnarray}
\left| \Pi_{j=1}^n \left( s^{(j)}(t) \right)^{k_j} \right|
 & = &
\left( \frac{\pi}{2} \right)^{k_1 + 2k_2 + \cdots + nk_n}
\,
\left| \sin(\pi t/2) \right|^{k_{\rm even}}
\,
\left| \cos(\pi t/2) \right|^{k_{\rm odd}},  \nonumber \\
 & = &
\left( \frac{\pi}{2} \right)^n
\,
\left| \sin(\pi t/2) \right|^{k_{\rm even}}
\,
\left| \cos(\pi t/2) \right|^{k_{\rm odd}},
  \nonumber \\
 & \le &
\label{eqn:sincos_bdd}
\left( \frac{\pi}{2} \right)^n 
\left( \frac{1}{2} \right)^{k_{\rm sc}},
\qquad \hbox{for all $t \in [-1,1]$.}
\end{eqnarray}
Finally, we define 
$$
\alpha(k_1,k_2,\ldots,k_n) \bydef
\frac{n!}{k_1!\, k_2! \, \cdots \, k_n!}
\,
\Pi_{j=1}^n 
\left(
\frac{1}{j!} 
\right)^{k_j},
$$
and arrive at
\begin{eqnarray}
\label{eqn:final_bdd}
\quadfbdd_q(\mu) 
 & \bydef &
K_q \,
\left( \frac{\pi}{2} \right)^{2q}  
\sum_{(k_1,k_2,\ldots,k_{2q})\in
{\cal S}_{2q}}
\alpha(k_1,k_2,\ldots,k_{2q}) \;
A_k(\mu) \, (1/2)^{k_{\rm sc}}, \\
 &  \ge    &  
K_q \,
\max_{t\in [-1,1]}
\left|
  f_\mu^{(2q)}(t)
\right|.
\nonumber
\end{eqnarray}
This bound is a synthesis of 
Equations~\ref{eqn:gauss_bound} through~\ref{eqn:sincos_bdd}.
The function $\quadfbdd_q(\mu)$ is decreasing in $\mu$
as each of the $A_k(\mu)$ is a decreasing
function. 
Finally, for any $\mu_0 > 1$, for all $\mu \ge \mu_0$,
we have 
$$
|\quadf_\reference(\mu)| \le \quadfbdd_q(\mu) \le
\quadfbdd_q(\mu_0).
$$
That completes the proof.
\end{proof}

Figure~\ref{figure:rho_bound} illustrates the upper bound function $\quadfbdd_q(\mu)$ 
for several values of $q$. 
Note that $\quadfbdd_q(\mu)$ given in Equation~\ref{eqn:final_bdd} is easy to compute numerically.
Theorem~\ref{thm:quadf_bound} allows us to assess the maximum of the
infinite tail end $|\quadf_\reference(\mu)|$ on, for example, $[y,\infty)$ for some specific $y$
by numerical sampling on just a finite interval because
$$
\max_{\mu \in [y,\infty)}|\quadf_\reference(\mu)|
\le \max \left\{\;
 \max_{\mu \in [y,y+L]} |\quadf_\reference(\mu)|, \; \quadfbdd_q(y+L) \;\right\}
$$
for any $L > 0$. 
Table~\ref{table:rho_tail} tabulates the decay of the tail ends of $\quadf_\reference(\mu)$
for several specific $q$ values. It also tabulates the maximum value of $\quadf_\reference(\mu)$
on the interval $[-1,1]$. 

\begin{figure}
\includegraphics[width=2.4in]{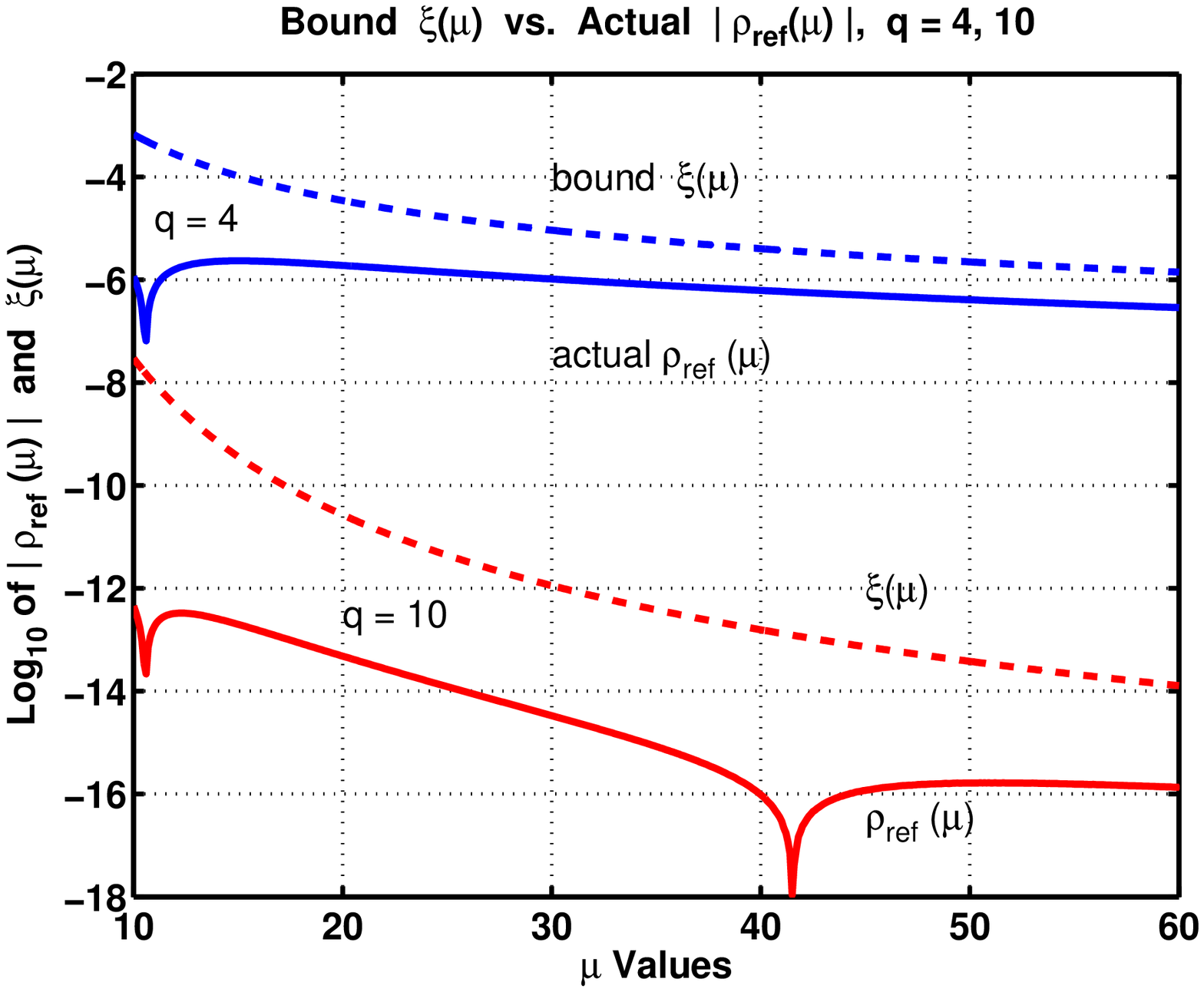}
\hspace{0.2in}
\includegraphics[width=2.4in]{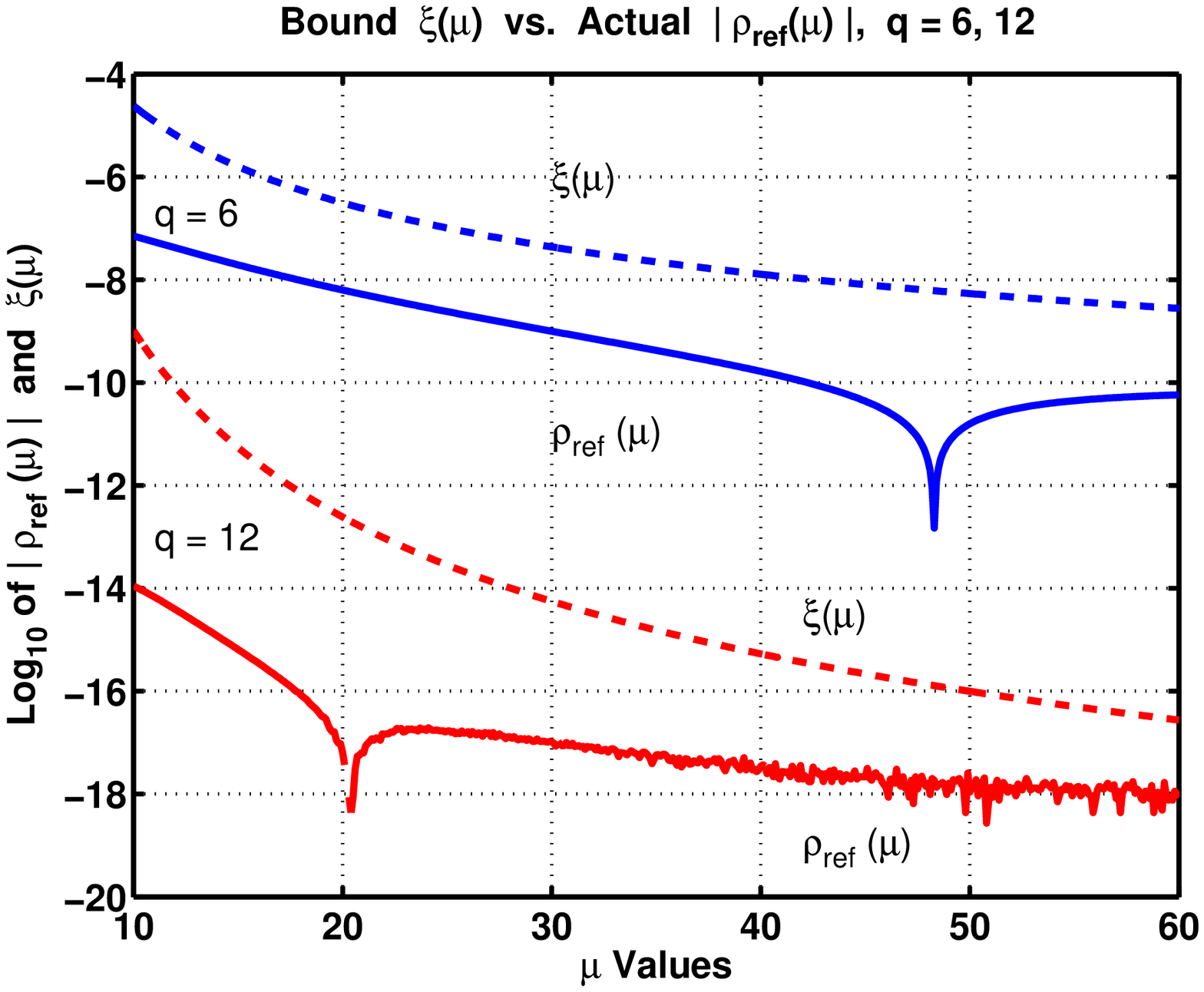}
\caption{
Upper bound functions that are decreasing are useful in 
bounding tail ends of $|\quadf_\reference(\mu)|$. For example, the graph
of the left shows $\quadfbdd(30) \approx 10^{-12}$ for the $q=10$ quadrature
rule. This implies $|\quadf_\reference(\mu)| \le 10^{-12}$ for all
$\mu \ge 30$.
}
\label{figure:rho_bound}
\end{figure}

\VS
\begin{table}[h] 
\begin{center}
\begin{tabular}{c || c | l l l l l l l}
 & $\max \quadf_\reference(\mu)$ & 
\multicolumn{7}{c}{ 
 $y$ where $\max_{\mu \in [y,\infty)} | \quadf_\reference(\mu) | \le 
          \frac{1}{2} 10^{-j}, j = 1, 2, \ldots, 7$} \\
$q$ & for $\mu \in [-1,1]$ & 
\multicolumn{1}{c}{$10^{-1}$} & 
\multicolumn{1}{c}{$10^{-2}$} & 
\multicolumn{1}{c}{$10^{-3}$} & 
\multicolumn{1}{c}{$10^{-4}$} & 
\multicolumn{1}{c}{$10^{-5}$} & 
\multicolumn{1}{c}{$10^{-6}$} & 
\multicolumn{1}{c}{$10^{-7}$} \\ \hline 
$4$ &
 $1.022$  & 
$ 1.17$  &
$ 1.84$  &
$ 2.24$  &
$ 6.08$  &
$ 8.96$  &
$ 44.90$  &
$ 145.80$  \\
$6$ &
 $1.023$  & 
$ 1.08$  &
$ 1.35$  &
$ 1.84$  &
$ 2.32$  &
$ 4.00$  &
$ 5.50$  &
$ 11.39$  \\
$8$ &
 $1.024$  & 
$ 1.05$  &
$ 1.20$  &
$ 1.45$  &
$ 1.64$  &
$ 2.29$  &
$ 2.59$  &
$ 4.28$  \\
$10$ &
 $1.024$  & 
$ 1.03$  &
$ 1.13$  &
$ 1.29$  &
$ 1.39$  &
$ 1.74$  &
$ 2.20$  &
$ 2.58$  \\
$12$ &
 $1.024$  & 
$ 1.03$  &
$ 1.10$  &
$ 1.21$  &
$ 1.28$  &
$ 1.50$  &
$ 1.79$  &
$ 1.96$  \\
\end{tabular}
\caption{
For the tabulated quadratures, $1/2 \le \quadf_\reference(\mu) \le 1.024$ for $
\mu\in [-1,1]$ (cf. Theorem~\ref{thm:quadf_properties}). The rightmost columns
show the ``relative attenuation'' as comparison of
$\max_{\mu\in [y,\infty)} | \quadf_\reference(\mu)|$ with 
$\min_{\mu\in [-1,1]}|\quadf_\reference(\mu)| = 1/2$.
}
\label{table:rho_tail}
\end{center}
\end{table}

\noindent
As stated previously, $\surM = X \Lambda\inverse{X}$ and
$\quadf(\surM) = X \quadf(\Lambda) \inverse{X}$. If
$\quadf(\lambda) = 1$ for $\lambda \in {\rm eig}(\surM) \cap \lambintc$
and $\quadf(\lambda) = 0$ for $\lambda \in {\rm eig}(\surM) \backslash \lambintc$,
then $\quadf(\surM)$ is the exact subspace projector $\projector$. 
Table~\ref{table:rho_tail} illustrates quantitatively that our construction
$\quadf(\surM)$ based on Gaussian-Legendre quadrature is an approximate subspace
projector. For example, for a quadrature rule of $q=8$ and
a general $\calI = \lambintc$ with $c = (\lambdamax+\lambdamin)/2$,
$r = (\lambdamax-\lambdamin)/2$, we have
$1 \approx \quadf(\lambda) \in [1/2,1.024]$ for $\lambda \in \calI$
and $|\quadf(\lambda)| \le \half\,10^{-3}$ for
$|\lambda-c| \ge 1.45 r$.


\section{Robust Subspace Convergence of FEAST}
\label{sec:subspace_iteration}

\renewcommand{\tmpa}[1]{
[\; (\XTop{m} + \XBot{p} \iterate{E}{#1})\Linv{#1} \phantom{kkkk} \iterate{V}{#1}\;] \; 
\iterate{U}{#1}
}

\renewcommand{\tmpb}[1]{
[\; (\XTop{m} + \XBot{p} \iterate{E}{#1} + \XBot{m} \iterate{F}{#1} )\Linv{#1} 
     \phantom{kkkk} \iterate{V}{#1}\;] \; \iterate{U}{#1}
}

The previous analysis suggests that a Gauss-Legendre quadrature accelerator 
$\quadf(\surM)$ should be effective in Algorithm A as the eigenvalues of
interest of $\surM$ are now the (highly) dominant eigenvalues of $\quadf(\surM)$.
We restate Algorithm A as Algorithm FEAST with $\quadf(\surM)$ being derived
specifically from Gauss-Legendre quadratures using a circular path parametrized as in
Equation~\ref{eqn:parametrization}. 
A varieties of functions $\quadf$ can be constructed by different quadrature
rules (e.g. Trapezoidal Rule) on different contours (e.g. ellipses). It is also possible
to construct $\quadf$ from a purely function approximation approach, for example
via Zolotarev approximation~\cite{viaud-thesis}.
%
%

\begin{algorithm}
{\it Algorithm FEAST} (as given in~\cite{polizzi-2009})
\begin{algorithmic}[1]
\State Specify $\calI = \lambintc$ and a Gauss-Legendre quadrature choice of $q$.
\State Pick $p$ and $p$ random $n$-vectors $\iterate{Q}{0} = [\vvq_1, \vvq_2, \ldots, \vvq_p]$.
       Set $k \gets 1$.
\Repeat
\State Approximate subspace projection (see Equation~\ref{eqn:quadrature-as-linear-systems}): 
       $\iterate{Y}{k} \gets \quadf(\surM) \cdot \iterate{Q}{k-1}$.
\State Form reduced system: $\iterate{\Ared}{k} \gets \iterate{\ctrans{Y}}{k} A \iterate{Y}{k}$,
       $\iterate{\Bred}{k} \gets \iterate{\ctrans{Y}}{k} B \iterate{Y}{k}$. 
\State Solve $p$-dimension eigenproblem: 
       $\iterate{\Ared}{k} \iterate{\Xred}{k} = \iterate{\Bred}{k} \iterate{\Xred}{k} \iterate{\Lamred}{k}$ 
       for $\iterate{\Lamred}{k}$, $\iterate{\Xred}{k}$.
\State Set $\iterate{Q}{k} \gets \iterate{Y}{k} \iterate{\Xred}{k}$, in particular 
       $\iterate{\ctrans{Q}}{k} \, B \, \iterate{Q}{k} = I_p$.
\State $k \gets k + 1$.
\Until {Appropriate stopping criteria}
\end{algorithmic}
\end{algorithm}

\VS
Basic convergence properties of subspace iteration for eigenvalue problem
are well known (see for example~\cite{demmel-numerical-linear-algebra,saad-eigenvalue-problems-2011}).
Nonetheless, utilization of an approximate subspace projector leads to special properties.
$\quadf(\surM)$ approximates a low-rank operator, and thus we customize our analysis to focus on
subspaces within the generated subspace $\iterate{{\cal Q}}{k} = \spann(\iterate{Q}{k})$.
The first theorem in this section, Theorem~\ref{thm:subspace_bound}, generalizes
Theorem 5.2 of~\cite{saad-eigenvalue-problems-2011} to cover generalized eigenvalue problems. Our
second theorem, Theorem~\ref{thm:subspace_bound_with_error}, takes into account that the
application of $\quadf(\surM)$ is inexact as it involves solutions of linear systems. We show that
subspace convergence is not affected in any fundamental way, and hence this section's title of
robust convergence.

\VS
As the decay of the function $\quadf(\mu)$ is closely tied to the property of the
approximate subspace projector $\quadf(\surM)$, we adopt the following convention.
Whenever there is an underlying fixed choice of $\quadf(\surM)$, we number the
eigenpairs $(\lambda_j,\vvx_j)$ so that
$
|\quadf(\lambda_1)| \ge
|\quadf(\lambda_2)| \ge \, \cdots \, \ge
|\quadf(\lambda_n)|.
$
In particular, if there are $\lambnum$ eigenvalues in $\calI$, then
$$
\quadf(\lambda_1) \ge
\quadf(\lambda_2) \ge \, \cdots \, \ge  
\quadf(\lambda_\lambnum) \ge 1/2 >
|\quadf(\lambda_{\lambnum+1})|  \ge \, \cdots \, \ge 
|\quadf(\lambda_n)|.
$$
We denote the eigenvalues of $\quadf(\surM)$ by
$\gamma_j \bydef \quadf(\lambda_j)$, $j = 1, 2, \ldots, n$.
Our analysis involves examining sections of the eigenvectors $X = [\vvx_1, \vvx_2, \ldots, \vvx_n]$ and 
the corresponding eigenvalues. We set up some simplifying notations: For any integer $\ell$,
$1 \le \ell < n$,
\begin{equation*}
\begin{array}{l l l  l l  l  l}
\XTop{\ell} & = &   [\vvx_1, \vvx_2, \ldots, \vvx_\ell],                               &  
      \phantom{kkk}                                                                    &
\XBot{\ell} & = &   [\vvx_{\ell+1}, \vvx_{\ell+2}, \ldots, \vvx_n],                    \\
\LambdaTop{\ell} & = &  \diag(\lambda_1, \lambda_2, \ldots, \lambda_\ell),             &  &
\LambdaBot{\ell} & = &  \diag(\lambda_{\ell+1}, \lambda_{\ell+2}, \ldots, \lambda_n),  \\
\GammaTop{\ell} & = &  \diag(\gamma_1, \gamma_2, \ldots, \gamma_\ell),                 &  &
\GammaBot{\ell} & = &  \diag(\gamma_{\ell+1}, \gamma_{\ell+2}, \ldots, \gamma_n).
\end{array}
\end{equation*}
With these notations, 
$\XTop{\ell}\XtTop{\ell}B$ is the $B$-orthogonal projector onto
$\spann(\{\vvx_1,\vvx_2,\ldots,\vvx_\ell\})$, 
$\XBot{\ell}\XtBot{\ell}B$ is the $B$-orthogonal projector onto
$\spann(\{\vvx_{\ell+1},\vvx_{\ell+2},\ldots,\vvx_n\})$, 
$I = \XTop{\ell}\XtTop{\ell}B + \XBot{\ell}\XtBot{\ell}B$, and
\begin{equation}
\label{eqn:quadf_decomposition}
\quadf(\surM) = \XTop{\ell}\,\GammaTop{\ell}\,\XtTop{\ell}B + 
                \XBot{\ell}\,\GammaBot{\ell}\,\XtBot{\ell}B.
\end{equation}
All norms $\Enorm{\cdot}$, unless explicitly stated using
subscripts, are $2$-norms.

\begin{theorem}
\label{thm:subspace_bound}
Consider Algorithm FEAST. Suppose $|\gamma_p| > 0$ and that 
the $p\times p$ matrix $\XtTop{p} B \iterate{Q}{0}$ is invertible. 
Let $m$ be any integer $m \le p$. Then there is a constant $\alpha$
such that the followings hold for $k = 1, 2, \ldots$.
\begin{itemize}
\item The subspace $\iterate{{\cal Q}}{k} \bydef \spann(\iterate{Q}{k})$ is of dimension $p$.
\item $\iterate{Q}{k}$ is of the form $\iterate{Q}{k} = \tmpa{k}$ where $\iterate{U}{k}$
      is a $p\times p$ unitary matrix. $\iterate{E}{k}, \iterate{L}{k}, \iterate{V}{k}$
      are of conforming dimensions of $(n-p)\times m, m\times m, n\times(p-m)$, respectively.
      ($\iterate{V}{k}$ is non-existent for $m=p$.) 
      Moreover, $\iterate{E}{k} = \GammaBot{p}\,\iterate{E}{k-1}\,\TSec{\inverse{\Gamma}}{m}$.
\item For each $j=1,2,\ldots, m$, there is a vector $\vvs_j \in \iterate{{\cal Q}}{k}$ such
      that $\Bnorm{\vvx_j - \vvs_j} \le \alpha |\gamma_{p+1}/\gamma_j|^k$. The $B$-norm of
      an $n$-vector $\vvy$ is defined in the standard way:
      $\Bnorm{\vvy} \bydef (\ctrans{\vvy}\,B\,\vvy)^{1/2}$.
      In particular, 
      $\Bnorm{(I - \iterate{P}{k})\vvx_j} \le \alpha |\gamma_{p+1}/\gamma_j|^k$ where
      $\iterate{P}{k}$ is the $B$-orthogonal projection onto the space
      $\iterate{{\cal Q}}{k}$.
\end{itemize}
\end{theorem}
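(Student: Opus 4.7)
The plan is to unroll Algorithm FEAST via the eigendecomposition $\quadf(\surM) = X\quadf(\Lambda)\ctrans{X}B$ of Equation~\ref{eqn:quadf_eigen_decomposition} and then read off the stated basis of $\iterate{{\cal Q}}{k}$. First, I would note that the Rayleigh-Ritz line of the algorithm writes $\iterate{Q}{k} = \iterate{Y}{k}\iterate{\Xred}{k}$ with $\iterate{\Xred}{k}$ a square matrix that must be invertible (otherwise $\ctrans{\iterate{Q}{k}}B\iterate{Q}{k} = I_p$ would fail). Hence $\iterate{{\cal Q}}{k} = \quadf(\surM)\iterate{{\cal Q}}{k-1}$ and by induction $\iterate{{\cal Q}}{k} = \quadf(\surM)^k\,\spann(\iterate{Q}{0})$. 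Substituting Equation~\ref{eqn:quadf_decomposition} with $\ell = p$ yields
$$
\quadf(\surM)^k\iterate{Q}{0} \;=\; \XTop{p}\,\GammaTop{p}^k\,(\XtTop{p}B\iterate{Q}{0}) \;+\; \XBot{p}\,\GammaBot{p}^k\,(\XtBot{p}B\iterate{Q}{0}).
$$
Under the hypotheses, $\XtTop{p}B\iterate{Q}{0}$ and $\GammaTop{p}$ are both invertible, so I would factor the $p\times p$ product $\GammaTop{p}^k(\XtTop{p}B\iterate{Q}{0})$ out on the right, obtaining $\iterate{{\cal Q}}{k} = \spann(\XTop{p} + \XBot{p}G^{(k)})$ with $G^{(k)} \bydef \GammaBot{p}^k(\XtBot{p}B\iterate{Q}{0})\inverse{(\XtTop{p}B\iterate{Q}{0})}\GammaTop{p}^{-k}$. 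Applying $\XtTop{p}B$ on the left of $\XTop{p}+\XBot{p}G^{(k)}$ produces $I_p$, so the $p$ columns are linearly independent, which simultaneously gives $\dim\iterate{{\cal Q}}{k}=p$.

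Next I would extract the first $m$ columns. Because $\GammaTop{p}^{-k}$ is diagonal, those columns form the $(n-p)\times m$ matrix $\iterate{E}{k} \bydef \GammaBot{p}^k\,\iterate{E}{0}\,(\TSec{\inverse{\Gamma}}{m})^k$, where $\iterate{E}{0}$ consists of the first $m$ columns of the $k$-independent matrix $(\XtBot{p}B\iterate{Q}{0})\inverse{(\XtTop{p}B\iterate{Q}{0})}$; the recursion $\iterate{E}{k} = \GammaBot{p}\,\iterate{E}{k-1}\,\TSec{\inverse{\Gamma}}{m}$ is then immediate. Using $\ctrans{\XBot{p}}B\XTop{m} = 0$ and the $B$-orthonormality of $X$, the $B$-Gram matrix of $\XTop{m}+\XBot{p}\iterate{E}{k}$ equals the positive definite $I_m + \ctrans{\iterate{E}{k}}\iterate{E}{k}$. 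Choosing $\Linv{k}$ as the inverse of its Cholesky factor $B$-orthonormalizes this first block; $B$-orthogonalizing the remaining $p-m$ columns of $\XTop{p}+\XBot{p}G^{(k)}$ against it (vacuously when $m=p$) produces a $B$-orthonormal $\iterate{V}{k}$ that completes a $B$-orthonormal basis of $\iterate{{\cal Q}}{k}$. Since $\iterate{Q}{k}$ is another such basis, the two differ by a unitary right factor $\iterate{U}{k}$, giving the stated decomposition.

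Finally, for each $j\le m$, I would set $\vvs_j \bydef \vvx_j + \XBot{p}\iterate{E}{k}e_j$, the $j$-th column of the first block, which manifestly lies in $\iterate{{\cal Q}}{k}$. Because $\ctrans{\XBot{p}}B\XBot{p} = I$, $\Bnorm{\vvx_j - \vvs_j} = \Enorm{\iterate{E}{k}e_j}$. For $j\le m$, $e_j$ is an eigenvector of $\TSec{\inverse{\Gamma}}{m}$ with eigenvalue $\gamma_j^{-1}$, so $\iterate{E}{k}e_j = \gamma_j^{-k}\GammaBot{p}^k(\iterate{E}{0}e_j)$; combined with $\Enorm{\GammaBot{p}^k\vvy}\le|\gamma_{p+1}|^k\Enorm{\vvy}$ (since the diagonal entries of $\GammaBot{p}$ are bounded in modulus by $|\gamma_{p+1}|$), this gives $\Bnorm{\vvx_j-\vvs_j}\le|\gamma_{p+1}/\gamma_j|^k\,\Enorm{\iterate{E}{0}e_j}$. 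Taking $\alpha \bydef \max_{j\le m}\Enorm{\iterate{E}{0}e_j}$, which is independent of $k$, produces the claimed bound. The projection statement then follows because $\iterate{P}{k}\vvx_j$ is the $B$-nearest point to $\vvx_j$ in $\iterate{{\cal Q}}{k}$. The main delicate step is the bookkeeping in descending from the full $p$-column basis $\XTop{p}+\XBot{p}G^{(k)}$ to the restricted $m$-column piece and then lifting back to a $B$-orthonormal frame; once that is unpacked, the convergence bound is essentially a one-line diagonal estimate.
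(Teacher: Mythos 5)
Your proposal is correct and follows essentially the same route as the paper: decompose $\iterate{Q}{0}$ in the eigenbasis of $\quadf(\surM)$, track the top/bottom blocks under repeated application of the filter, $B$-orthonormalize the first $m$ columns, and read off the bound $\Enorm{\iterate{E}{k}\vve_j}\le|\gamma_{p+1}/\gamma_j|^k\Enorm{\iterate{E}{0}\vve_j}$. The only difference is cosmetic — you unroll the iteration in closed form as $\quadf(\surM)^k\iterate{Q}{0}$ and verify rank $p$ by hitting the basis with $\XtTop{p}B$, whereas the paper runs the same computation as a step-by-step induction and gets full rank from positive definiteness of $\iterate{\Bred}{k}$.
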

\begin{proof}
$|\gamma_p| > 0$ implies that $\TSec{\Gamma}{i}$ is invertible for any
$i = 1, 2, \ldots, p$. Suppose
$\XtTop{p}\,B\,\iterate{Q}{k-1}$ is invertible for some $k=1,2,\ldots$,
then
\begin{eqnarray*}
\iterate{Y}{k}
 & = &
 \quadf(\surM)\iterate{Q}{k-1}, \\
 & = &
 \XTop{p}\TSec{\Gamma}{p}(\XtTop{p}\,B\,\iterate{Q}{k-1}) +
 \XBot{p}\BSec{\Gamma}{p}(\XtBot{p}\,B\,\iterate{Q}{k-1}), \\
 & = &
 \left(\XTop{p} +
 \XBot{p}(\BSec{\Gamma}{p}\XtBot{p}\,B\,\iterate{Q}{k-1}) \inverse{Z}\right) Z,
\end{eqnarray*}
where $Z = \TSec{\Gamma}{p}(\XtTop{p}\,B\,\iterate{Q}{k-1})$ is invertible.
Thus $\iterate{Y}{k}$ has full column rank, which implies
$\iterate{\Bred}{k} = \iterate{\ctrans{Y}}{k} B \iterate{Y}{k}$
is Hermitian positive definite. The dimension-$p$ generalized eigenvalue problem 
specified by $\iterate{\Ared}{k}$ and $\iterate{\Bred}{k}$ has linearly independent 
eigenvectors, that is, the matrix $\iterate{\Xred}{k}$ is invertible. Consequently,
$\iterate{Q}{k} = \iterate{Y}{k} \iterate{\Xred}{k}$ has full column rank,
$\iterate{{\cal Q}}{k}$ is of dimension $p$, and the matrix $\XtTop{p} B \iterate{Q}{k}$ 
is invertible. Since by assumption $\XtTop{p}B\iterate{Q}{0}$ is invertible, we conclude
by induction that the subspaces $\iterate{{\cal Q}}{k}$ are all of dimension $p$
for $k=1,2,\ldots$.

\VS
Define $\iterate{U}{0} \bydef \XtTop{p}B\iterate{Q}{0}$. $\iterate{U}{0}$ is invertible
by assumption (but not necessarily unitary).
\begin{eqnarray*}
\iterate{Q}{0}
 & = &
 (\XTop{p}\XtTop{p}B + \XBot{p}\XtBot{p}B) \iterate{Q}{0}, \\
 & = &
 (\XTop{p} + \XBot{p}\XtBot{p}B\iterate{Q}{0} \iterate{\inverse{U}}{0}) \iterate{U}{0}.
\end{eqnarray*}
Given an integer $m \le p$, partition the $p$ columns of $\iterate{Q}{0} \iterate{\inverse{U}}{0}$
into $m$ and $p-m$ columns:
$$
\iterate{Q}{0} = 
[\; (\XTop{m} + \XBot{p} \iterate{E}{0}) \;\;\; \iterate{V}{0}\;] \; 
\iterate{U}{0},
$$
where $\iterate{E}{0}$ is simply the first $m$ columns of 
$\XtBot{p}B\iterate{Q}{0}\iterate{\inverse{U}}{0}$. Consequently, 
$$
\iterate{Y}{1} = \quadf(\surM)\iterate{Q}{0} =
[\; (\XTop{m} + \XBot{p} \iterate{E}{1})\TSec{\Gamma}{m} \;\;\; 
\quadf(\surM) \,\iterate{V}{0}\;] \; 
\iterate{U}{0},
$$
$\iterate{E}{1} = \BSec{\Gamma}{p}\,\iterate{E}{0}\,\TSec{\inverse{\Gamma}}{m}$.
$\XTop{m} + \XBot{p}\iterate{E}{1}$ is of full rank $m$ as $\XTop{m}$ is linearly
independent with $\XBot{p}$. Since $B$ is Hermitian positive definite, there is
a matrix $\iterate{L}{1}$ such that
$$
\iterate{\ctrans{L}}{1}\iterate{L}{1} =
\ctrans{ (\XTop{m} + \XBot{p}\iterate{E}{1}) } \, B \,
         (\XTop{m} + \XBot{p}\iterate{E}{1}) ,
$$
which implies
$(\XTop{m} + \XBot{p}\iterate{E}{1}) \Linv{1}$ is $B$-orthonormal.
Any $\iterate{L}{1}$ that satisfies the above equation is acceptable.
Note that $\iterate{L}{1}$ is unique up to a unitary equivalence.
For any specifically chosen $\iterate{L}{1}$,
complete a $B$-orthonormal basis for $\spann(\iterate{Y}{1})$ to yield
$[\; (\XTop{m} + \XBot{p} \iterate{E}{1})\Linv{1} \;\;\; \iterate{V}{1}\;]$.
Since $\iterate{Q}{1}$ is $B$-orthonormal as well, there must be
a $p\times p$ unitary matrix $\iterate{U}{1}$ such that
$$
\iterate{Q}{1} = \tmpa{1}.
$$
We have shown that at $k=1$, $\iterate{Q}{k}$ is of the form
\begin{equation}
\label{eqn:Qk_simple_form}
\iterate{Q}{k} = \tmpa{k},
\end{equation}
where $\iterate{U}{k}$ is unitary and $\iterate{Q}{k}$ $B$-orthonormal. Suppose now
Equation~\ref{eqn:Qk_simple_form} holds for some $k \ge 1$. 
$$
\iterate{Y}{k+1} = \quadf(\surM)\iterate{Q}{k} =
[\; (\XTop{m} + \XBot{p} \iterate{E}{k+1})\TSec{\Gamma}{m}\Linv{k} \;\;\; 
\quadf(\surM) \,\iterate{V}{k}\;] \; 
\iterate{U}{k},
$$
where 
$$
\iterate{E}{k+1} = \BSec{\Gamma}{p}\,\iterate{E}{k}\, \TSec{\inverse{\Gamma}}{m}.
$$
Let $\iterate{L}{k+1}$ be such that
$$
\iterate{\ctrans{L}}{k+1}\iterate{L}{k+1} =
\ctrans{ (\XTop{m} + \XBot{p}\iterate{E}{k+1}) } \, B \,
         (\XTop{m} + \XBot{p}\iterate{E}{k+1}) .
$$
This makes
$(\XTop{m} + \XBot{p}\iterate{E}{k+1}) \Linv{k+1}$ $B$-orthonormal. 
Complete a $B$-orthonormal basis with $\iterate{V}{k+1}$ for 
$\spann(\iterate{Y}{k+1}) = \spann(\iterate{Q}{k+1})$. Hence, there must be
a $p\times p$ unitary matrix $\iterate{U}{k+1}$ such that
$$
\iterate{Q}{k+1} = \tmpa{k+1}.
$$
By induction, we have established the second item of this theorem.

\VS
Finally, each of the $m$ column vectors  of $\XTop{m} + \XBot{p}\iterate{E}{k}$
is in the subspace $\iterate{{\cal Q}}{k}$, and 
$\iterate{E}{k} = \BSec{\Gamma^k}{p}\,\iterate{E}{0}\,\TSec{\Gamma^{-k}}{m}$ 
for $k=1,2,\ldots$.
Let $\alpha \bydef \Enorm{\iterate{E}{0}}$. For each $k=1,2,\ldots$, and
$j = 1, 2, \ldots, m$, let $\vve_j$ be the $j$-th column of
$\iterate{E}{k}$, and define $\vvs_j \bydef \vvx_j + \XBot{p}\vve_j$. Clearly
$\vvs_j \in \iterate{{\cal Q}}{k}$ and
$$
\Bnorm{\vvx_j - \vvs_j} = \Bnorm{\XBot{p}\vve_j} = \Enorm{\vve_j} 
\le \alpha\, \left| \frac{\gamma_{p+1}}{\gamma_j} \right|^k.
$$
In particular,
$$
\Bnorm{(I-\iterate{P}{k})\vvx_j} 
 = \min_{\vvs \in \iterate{{\cal Q}}{k}} \Bnorm{\vvx_j-\vvs}
 \le \alpha\, \left| \frac{\gamma_{p+1}}{\gamma_j} \right|^k.
$$
This proves the final item of the theorem.
\end{proof}

\VS
Theorem~\ref{thm:subspace_bound} together with Table~\ref{table:rho_tail} illustrate
the effectiveness of FEAST. For example, let $\calI = \lambintc$ be the specified
interval, with center $c = (\lambdamax+\lambdamin)/2$ and 
radius $r = (\lambdamax-\lambdamin)/2$.
If $\surM$'s spectrum is distributed somewhat uniformly, then the interval
$[c-1.45 r, c + 1.45r]$ would have roughly $1.45$ times as many eigenvalues as there are
in $\calI$. For a quadrature rule of $q=8$ and a $p$ that happens to be 
$p \ge 1.45 \lambnum$,
then for $j=1,2,\ldots,\lambnum$, we would expect $|\gamma_{p+1}/\gamma_j|$ be
be as small as $10^{-3}$. So that for each $j$, there exist
elements $\iterate{\vvs}{k}\in\iterate{{\cal Q}}{k}$, $k=1,2,\ldots$, 
such that $\Bnorm{\vvx_j-\iterate{\vvs}{k}} \rightarrow 0$ at a rate of $10^{-3k}$. 

\VS
Application of $\quadf(\surM)$ in general incurs more error compared to standard
subspace iterations ($\quadf(\mu) \equiv 1$) or a low-degree polynomial accelerator
($\quadf(\mu)$ is a low-degree polynomial). 
We now examine Algorithm FEAST when Step 4 is replaced by
$$
\iterate{Y}{k} \gets \quadf(\surM) \iterate{Q}{k-1} + \iterate{\Delta}{k-1},
\qquad
\hbox{$\iterate{\Delta}{k-1}$ is a $n\times p$ matrix}.
$$
Our analysis below shows that convergence is not affected in any fundamental way.
Without the error term, Theorem~\ref{thm:subspace_bound} shows that the generated subspaces
$\iterate{{\cal Q}}{k}$ contains elements of the form $\vvx_j + \XBot{p} \vve$, $1 \le j \le m$.
The component $\XBot{p}\vve$ will be attenuated in the iterative process. The presence
of the error term $\iterate{\Delta}{k}$ in effect introduces a small component of the form
$\XBot{m}\vvf$ at each iteration. Once introduced, however, this component will also be
attenuated as the iterative process proceeds. Roughly speaking, the dominant error
term is the one most recently introduced, and convergence of subspace iteration remains
robust. The next lemma examines the form of the generated subspaces and 
Theorem~\ref{thm:subspace_bound_with_error} quantifies them.

\VS
\begin{lemma}
\label{lemma:subspace_bound_with_error}
Suppose for any $k=1,2,\ldots$, $\iterate{Q}{k-1}$ is of the form
$$
\tmpb{k-1}
$$
for some $m\le p$, where $\iterate{U}{k-1}$ and $\iterate{L}{k-1}$ are
invertible matrices of dimensions $p \times p$ and $m \times m$, respectively.
The matrices $\iterate{E}{k-1}$, $\iterate{F}{k-1}$, and $\iterate{V}{k-1}$ are of
conforming dimensions. Define the terms $\iterate{\Deltop}{k-1}$, $\iterate{\Delbot}{k-1}$, 
$\iterate{\Deldc}{k-1}$ via the following partitioning involving the error term
$\iterate{\Delta}{k-1}$:
$$
\inverse{X}\,\iterate{\Delta}{k-1}\,\iterate{\inverse{U}}{k-1}\,
\left[
\begin{array}{c | c}
\iterate{L}{k-1} &     \\ \hline
               &  I  
\end{array}
\right] =
\Deltrans{k-1},
\qquad
\hbox{$\iterate{\Deltop}{k-1}$ is $m \times m$.}
$$
If $\GammaTop{m} + \iterate{\Deltop}{k-1}$ is invertible and 
$\iterate{Y}{k} = \quadf(\surM)\iterate{Q}{k-1} + \iterate{\Delta}{k-1}$
remains full column rank, then $\iterate{Q}{k}$ is of the form
$$
\iterate{Q}{k} =
\tmpb{k},
$$
where $\iterate{U}{k}$ is unitary, and
$$
\iterate{\ctrans{L}}{k}\,\iterate{L}{k} =
\ctrans{
(\XTop{m} + \XBot{p}\,\iterate{E}{k} + \XBot{m}\,\iterate{F}{k})
}
\, B \,
(\XTop{m} + \XBot{p}\,\iterate{E}{k} + \XBot{m}\,\iterate{F}{k}).
$$
In particular, 
$\Enorm{\iterate{L}{k}} \le 
  \left(1 + (\Enorm{\iterate{E}{k}} + \Enorm{\iterate{F}{k}})^2\right)^{1/2}$.
Furthermore,
\begin{eqnarray*}
\iterate{E}{k}
 & = &
 \GammaBot{p} \, \iterate{E}{k-1} \, \inverse{(\GammaTop{m} + \iterate{\Deltop}{k-1})}, 
 \qquad {\rm and}\\
\iterate{F}{k}
 & = &
 \GammaBot{m} \, \iterate{F}{k-1} \, \inverse{(\GammaTop{m} + \iterate{\Deltop}{k-1})} +
 \iterate{\Delbot}{k-1} \, \inverse{(\GammaTop{m} + \iterate{\Deltop}{k-1})}.
\end{eqnarray*}
\end{lemma}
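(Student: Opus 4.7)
The plan is to mirror the structural calculation used in the proof of Theorem~\ref{thm:subspace_bound}, but to carry the perturbation $\iterate{\Delta}{k-1}$ through explicitly. The workhorse is the decomposition~(\ref{eqn:quadf_decomposition}), which --- invoked at the two cutoffs $\ell=m$ and $\ell=p$ together with the $B$-orthonormality $\ctrans{X}BX=I$ --- yields $\quadf(\surM)\XTop{m}=\XTop{m}\GammaTop{m}$, $\quadf(\surM)\XBot{p}=\XBot{p}\GammaBot{p}$, and $\quadf(\surM)\XBot{m}=\XBot{m}\GammaBot{m}$. Since the lemma is a single-step statement (not an induction), only one application of $\quadf(\surM)$ needs to be expanded in these coordinates.

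First, I would absorb the right-side mixing of $\iterate{Q}{k-1}$ into a single matrix $W \bydef \iterate{\inverse{U}}{k-1}\left[\begin{array}{c|c}\iterate{L}{k-1}&\\ \hline&I\end{array}\right]$, so that $\iterate{Q}{k-1}W = [\,(\XTop{m}+\XBot{p}\iterate{E}{k-1}+\XBot{m}\iterate{F}{k-1})\;\;\iterate{V}{k-1}\,]$. By the assumed block partition, the first $m$ columns of $\iterate{\Delta}{k-1}W = X\,(\inverse{X}\iterate{\Delta}{k-1}W)$ are exactly $\XTop{m}\iterate{\Deltop}{k-1}+\XBot{m}\iterate{\Delbot}{k-1}$. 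Combining with $\quadf(\surM)\iterate{Q}{k-1}W$ computed above, the first $m$ columns of $\iterate{Y}{k}W = \quadf(\surM)\iterate{Q}{k-1}W + \iterate{\Delta}{k-1}W$ read
$$\XTop{m}(\GammaTop{m}+\iterate{\Deltop}{k-1}) + \XBot{p}\,\GammaBot{p}\iterate{E}{k-1} + \XBot{m}(\GammaBot{m}\iterate{F}{k-1}+\iterate{\Delbot}{k-1}).$$

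Setting $G\bydef\GammaTop{m}+\iterate{\Deltop}{k-1}$ (invertible by hypothesis) and factoring $G$ to the right immediately exposes the claimed recursions $\iterate{E}{k}=\GammaBot{p}\iterate{E}{k-1}G^{-1}$ and $\iterate{F}{k}=\GammaBot{m}\iterate{F}{k-1}G^{-1}+\iterate{\Delbot}{k-1}G^{-1}$, and reduces the first $m$ columns to $(\XTop{m}+\XBot{p}\iterate{E}{k}+\XBot{m}\iterate{F}{k})G$. This block has full column rank $m$ (as $\XTop{m}$ is independent from $\spann(\XBot{m})$), so I may pick any $\iterate{L}{k}$ realizing the stated Gram-matrix identity, after which $(\XTop{m}+\XBot{p}\iterate{E}{k}+\XBot{m}\iterate{F}{k})\Linv{k}$ is $B$-orthonormal. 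Completing a $B$-orthonormal basis of $\spann(\iterate{Y}{k})$ with $\iterate{V}{k}$ and invoking the $B$-orthonormality of $\iterate{Q}{k}$ delivered by the Rayleigh--Ritz step of Algorithm~FEAST then forces the existence of a $p\times p$ unitary $\iterate{U}{k}$ with $\iterate{Q}{k} = \tmpb{k}$.

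For the bound on $\Enorm{\iterate{L}{k}}$ the key subtlety is the overlap $\XBot{p}\subset\XBot{m}$: writing $\XBot{p}\iterate{E}{k}+\XBot{m}\iterate{F}{k}=\XBot{m}\iterate{G}{k}$ with $\iterate{G}{k}\bydef\iterate{F}{k}+\left[\begin{array}{c}0\\ \iterate{E}{k}\end{array}\right]$ yields $\Enorm{\iterate{G}{k}}\le\Enorm{\iterate{E}{k}}+\Enorm{\iterate{F}{k}}$, and then $\ctrans{X}BX=I$ collapses the Gram matrix to $\iterate{\ctrans{L}}{k}\iterate{L}{k}=I_m+\iterate{\ctrans{G}}{k}\iterate{G}{k}$, which delivers the stated bound. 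The hard part of the argument is purely bookkeeping: the three error blocks $\iterate{\Deltop}{k-1},\iterate{\Delbot}{k-1},\iterate{\Deldc}{k-1}$ must be correctly aligned with the $\XTop{m}$/$\XBot{m}$ split and with the rearrangement encoded in $W$, and the $\XBot{p}\subset\XBot{m}$ overlap must be merged via $\iterate{G}{k}$ so that the Gram matrix simplifies cleanly.
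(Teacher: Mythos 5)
Your proposal is correct and follows essentially the same route as the paper's proof: expand $\quadf(\surM)\iterate{Q}{k-1}$ and $\iterate{\Delta}{k-1}$ in the eigenvector coordinates, factor $\GammaTop{m}+\iterate{\Deltop}{k-1}$ out of the first $m$ columns to expose the recursions, $B$-orthonormalize via $\iterate{L}{k}$, and complete the basis to obtain the unitary $\iterate{U}{k}$. The only (cosmetic) difference is in the norm bound, where you collapse the Gram matrix exactly to $I_m+\iterate{\ctrans{G}}{k}\iterate{G}{k}$ rather than bounding the cross terms $2\Enorm{\iterate{E}{k}}\Enorm{\iterate{F}{k}}$ separately as the paper does; both yield the same estimate.
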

\begin{proof}
By assumption,
\begin{eqnarray*}
\lefteqn{
\quadf(\surM)\,\iterate{Q}{k-1} } \\
 & = &
 [\; (\XTop{m}\,\GammaTop{m} + \XBot{p}\,\GammaBot{p}\,\iterate{E}{k-1} + 
      \XBot{m} \,\GammaBot{m} \, \iterate{F}{k-1} )\Linv{k-1} 
     \;\;\;\; \quadf(\surM)\,\iterate{V}{k-1}\;] \; \iterate{U}{k-1}, 
 \\
\lefteqn{
\iterate{\Delta}{k-1} } \\
 & = &
 [\; (\XTop{m}\,\iterate{\Deltop}{k-1} +
      \XBot{m}\,\iterate{\Delbot}{k-1})\,\Linv{k-1} \;\;\;
     X\,\iterate{\Deldc}{k-1} 
  \;] \; \iterate{U}{k-1}.
\end{eqnarray*}
The following $m$ column vectors are in $\spann(\iterate{Y}{k})$,
$\iterate{Y}{k} = \quadf(\surM)\,\iterate{Q}{k-1}+\iterate{\Delta}{k-1}$:
\begin{eqnarray*}
\lefteqn{
  \XTop{m}\,(\GammaTop{m} + \iterate{\Deltop}{k-1}) + 
   \XBot{p}\,\GammaBot{p} \,\iterate{E}{k-1}         + 
   \XBot{m} \,(\GammaBot{m} \, \iterate{F}{k-1}  + \iterate{\Delbot}{k-1})
} &  & \\
 &  \phantom{kkkkk} = & 
  (\XTop{m}  +  
   \XBot{p}\,\iterate{E}{k}         + 
   \XBot{m} \, \iterate{F}{k} )\, (\GammaTop{m} + \iterate{\Deltop}{k-1}),
\end{eqnarray*}
where
\begin{eqnarray*}
\iterate{E}{k}
 & = &
 \GammaBot{p} \, \iterate{E}{k-1} \, \inverse{(\GammaTop{m} + \iterate{\Deltop}{k-1})}, 
 \qquad {\rm and}\\
\iterate{F}{k}
 & = &
 \GammaBot{m} \, \iterate{F}{k-1} \, \inverse{(\GammaTop{m} + \iterate{\Deltop}{k-1})} +
 \iterate{\Delbot}{k-1} \, \inverse{(\GammaTop{m} + \iterate{\Deltop}{k-1})}.
\end{eqnarray*}
We can $B$-orthonormalize these $m$ columns via
$$
  (\XTop{m}  +  
   \XBot{p}\,\iterate{E}{k}         + 
   \XBot{m} \, \iterate{F}{k} )\, \Linv{k}
$$
where
$$
\iterate{\ctrans{L}}{k} \iterate{L}{k} =
\ctrans{
  (\XTop{m} + \XBot{p}\,\iterate{E}{k} + \XBot{m} \,\iterate{F}{k})
} \, B \,
 (\XTop{m} + \XBot{p}\,\iterate{E}{k} + \XBot{m} \,\iterate{F}{k}).
$$
Recall that $B = \ctrans{C}C$ and $CX$ is unitary. Therefore
\begin{eqnarray*}
\Enorm{\iterate{L}{k}}^2 
 & \le &
\Enorm{
\ctrans{
  (\XTop{m} + \XBot{p}\,\iterate{E}{k} + \XBot{m} \,\iterate{F}{k})
} \, B \,
 (\XTop{m} + \XBot{p}\,\iterate{E}{k} + \XBot{m} \,\iterate{F}{k}).
}, \\
 & \le &
1 + 2\Enorm{\iterate{E}{k}}\Enorm{\iterate{F}{k}} + \Enorm{\iterate{E}{k}}^2 +
  \Enorm{\iterate{F}{k}}^2, \\
 & = &
1 + (\Enorm{\iterate{E}{k}} + \Enorm{\iterate{F}{k}})^2.
\end{eqnarray*}
Complete a $B$-orthonormal basis for $\spann(\iterate{Y}{k})$
by adding an appropriate $p-m$ columns $\iterate{V}{k}$.
Since $\iterate{Q}{k}$ is $B$-orthonormal for $k=1,2,\ldots$,
there is a $p \times p$ unitary matrix $\iterate{U}{k}$ such that
$$
\iterate{Q}{k} = \tmpb{k}.
$$
This completes the proof.
\end{proof}

\newcommand{\bb}{\beta(1+\delta)}
\newcommand{\bbp}{\beta'(1+\delta)}
\newcommand{\onepd}{(1+\delta)}
\newcommand{\ee}[1]{e_{\delta,#1}}
\newcommand{\ff}[1]{f_{\delta,#1}}
\newcommand{\ginv}{\Gamma_m^{-1}}
\newcommand{\gdinv}{\inverse{(I + \ginv\Deltop)}}
\newcommand{\gdinvk}[1]{\inverse{(I + \ginv\iterate{\Deltop}{#1})}}
\newcommand{\gpdinv}[1]{\inverse{(\Gamma_m + \iterate{\Deltop}{#1})}}

\VS
The next Lemma is a collection of several technical details needed in subsequent discussions.
\begin{lemma}
\label{lemma:ef_bounds}
Assume $|\gamma_p| > 0$ and consider any integer $m\le p$. Let
$\alpha, \beta, \beta', \delta$ be positive numbers where 
$$
\delta < 1, \quad
\beta(1+\delta) \le 1/2, \quad 
\tau \bydef 1 -  \beta'(1+\delta) > 0.
$$
Define the sequence $\ee{0} \bydef \alpha$,
$\ff{0} \bydef 0$, and for $k=1,2,\ldots$,
$$
\ee{k} \bydef \bb \ee{k-1}, \qquad
\ff{k} \bydef \bbp\ff{k-1} + \delta(1+\delta)/2.
$$
The followings hold.
\begin{enumerate}
\item $\ee{k} = \alpha(\bb)^k$ decreases as $k$ increases. As long as
      $\ee{k-1}\ge 2\delta$, $\ee{k}+\ff{k} \le \ee{k-1}+\ff{k-1}$.
\item $\ff{k} \le \delta(1+\delta)/(2\tau)$ for all $k=1,2,\ldots$.
\item Consider any $m\times m$ matrix $\Deltop$. If $\Enorm{\Deltop} \le \delta|\gamma_m|/2$,
      then $\Enorm{\gdinv} < 1 +\delta$,
      $\Enorm{\gdinv - I} < \delta$,
      and the matrix $\Gamma_m + \Deltop$ is invertible 
      with $\inverse{(\Gamma_m + \Deltop)} = \gdinv\,\ginv$.
\item Consider a Hermitian matrix of an arbitrary dimension $\ell$ of the form $I + \Deltop$,
      $\ctrans{\Deltop} = \Deltop$ and $\Enorm{\Deltop} \le 1/2$. Then
      $(I+\Deltop)^{1/2}$ and $(I+\Deltop)^{-1/2}$ are well defined and
      $$
      \Enorm{(I+\Deltop)^{\pm 1/2} - I} \le \Enorm{\Deltop}.
      $$
\end{enumerate}
\end{lemma}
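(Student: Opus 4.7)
The lemma is a compendium of four essentially independent technical bounds, so the plan is to dispatch each item separately using standard tools (geometric series, Neumann series, spectral calculus) and then track constants carefully to land on the stated inequalities.

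For item (1), the closed form $\ee{k} = \alpha(\bb)^k$ is immediate by induction from the recurrence, and monotone decrease is just $\bb \le 1/2 < 1$. For the sum inequality, I would compute
\begin{equation*}
(\ee{k-1}+\ff{k-1}) - (\ee{k}+\ff{k}) = (1-\bb)\ee{k-1} + (1-\bbp)\ff{k-1} - \delta(1+\delta)/2,
\end{equation*}
and observe that $1-\bb \ge 1/2$ and $1-\bbp = \tau > 0$. Under $\ee{k-1} \ge 2\delta$ one gets $(1-\bb)\ee{k-1} \ge \delta \ge \delta(1+\delta)/2$ since $\delta \le 1$, and the remaining $(1-\bbp)\ff{k-1}\ge 0$ term only helps.

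Item (2) is a linear recurrence $\ff{k} = \bbp\ff{k-1} + c$ with $c = \delta(1+\delta)/2$ and contraction factor $\bbp = 1-\tau \in (0,1)$. Its fixed point is $c/\tau$, and since $\ff{0}=0 \le c/\tau$, induction gives $\ff{k} \le c/\tau = \delta(1+\delta)/(2\tau)$.

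For item (3), I would write $\Gamma_m + \Deltop = \Gamma_m(I + \ginv\Deltop)$ and use $\Enorm{\ginv\Deltop} \le \Enorm{\Deltop}/|\gamma_m| \le \delta/2 < 1$, so the Neumann series yields invertibility and $\Enorm{\gdinv} \le 1/(1-\delta/2)$. The bound $1/(1-\delta/2) < 1+\delta$ is equivalent to $\delta/2 > \delta^2/2$, true for $\delta<1$. Next $\gdinv - I = -\gdinv\,\ginv\Deltop$ gives $\Enorm{\gdinv - I} \le (1+\delta)(\delta/2) < \delta$, again using $\delta<1$. The identity $\inverse{(\Gamma_m+\Deltop)} = \gdinv\,\ginv$ then follows from the factorization.

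Item (4) is the spectral calculus step. With $\Deltop$ Hermitian and $\Enorm{\Deltop}\le 1/2$, the eigenvalues of $I+\Deltop$ lie in $[1/2,3/2]$ so the square roots are well-defined functions of the matrix. Writing $(I+\Deltop)^{1/2}-I$ via the spectral decomposition reduces the claim to the scalar inequalities $|\sqrt{1+\sigma}-1| \le |\sigma|$ and $|1/\sqrt{1+\sigma}-1| \le |\sigma|$ for $|\sigma|\le 1/2$. For the first, $|\sqrt{1+\sigma}-1| = |\sigma|/(\sqrt{1+\sigma}+1) \le |\sigma|$ since the denominator is $\ge 1$. For the second, $|1 - 1/\sqrt{1+\sigma}| = |\sigma|/(1+\sigma+\sqrt{1+\sigma})$ and on $|\sigma|\le 1/2$ the denominator is at least $1/2 + \sqrt{1/2} > 1$.

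None of these steps is deep; the main risk is arithmetic bookkeeping, in particular keeping track of the strict vs.\ non-strict inequalities in item (3) and confirming that $\delta \le 1$ is enough in item (1) to absorb the $(1+\delta)/2$ factor. I would therefore write the proof in the order (3), (4), (2), (1), so that the matrix-analytic facts stand on their own and the scalar recurrence arguments come last, where they feed most directly into later applications of the lemma.
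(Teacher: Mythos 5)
Your proposal is correct and follows essentially the same route as the paper: closed form plus a monotonicity estimate for item (1), the geometric-series/fixed-point bound for item (2), the factorization $\Gamma_m+\Deltop = \Gamma_m(I+\ginv\Deltop)$ with a Neumann-series bound for item (3), and reduction to scalar inequalities via the spectral decomposition for item (4). The only differences are cosmetic (e.g., you bound $\gdinv - I$ via the identity $\gdinv - I = -\gdinv\,\ginv\Deltop$ rather than the tail of the Neumann series, and you spell out the scalar square-root inequalities that the paper merely asserts), and all the constants check out.
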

\begin{proof}
That
$\ee{k} = \alpha (\bb)^k$ is clear, and 
$\ee{k}$ is obviously a decreasing function in $k$ because
$\bb\le 1/2$ by assumption. For $k\ge 1$ and $\ee{k-1}\ge 2\delta$, we have
\begin{eqnarray*}
\ee{k}+\ff{k}
 & = &
 \bb\ee{k-1} + \bbp\ff{k-1} + \delta(1+\delta)/2, \\
 & \le &
 \ee{k-1}/2 + \ff{k-1} + \delta(1+\delta)/2, \;
 \hbox{ase $\bb\le 1/2$ and $\bbp < 1$,} \\
 & \le &
 \ee{k-1}/2 + \ff{k-1} + \delta, \quad
 \hbox{as $\delta < 1$,} \\
 & \le &
 \ee{k-1}/2 + \ff{k-1} + \ee{k-1}/2, \quad
 \hbox{as $\ee{k-1} \ge 2\delta$,} \\
 & = & \ee{k-1} + \ff{k-1}.
\end{eqnarray*}
This proves (1). 

\VS
Next,
$$
\ff{k} < \frac{\delta}{2}\,(1+\delta)\,\sum_{\ell=0}^\infty (\bbp)^\ell \le 
         \frac{\delta(1+\delta)}{2(1-\bbp)} = \frac{\delta(1+\delta)}{2\tau},
$$
which proves (2). 

\VS
To prove (3), note that $\Enorm{\Deltop}\le\delta|\gamma_m|/2$ implies
$\Enorm{\ginv\Deltop} \le \frac{\delta}{2} < 1/2$. Consequently,
$\gdinv = \sum_{\ell=0}^\infty (-1)^\ell (\ginv\Deltop)^\ell$ and
$$
\Enorm{\gdinv} = \Enorm{ \sum_{\ell=0}^\infty (-1)^\ell (\ginv\Deltop)^\ell }
 \le 1 + \frac{\delta}{2}\sum_{\ell = 0}^\infty \left(\frac{\delta}{2}\right)^\ell < 1 + \delta.
$$
Similarly,
$$
\Enorm{\gdinv-I} = \Enorm{ \sum_{\ell=1}^\infty (-1)^\ell (\ginv\Deltop)^\ell }
 \le \frac{\delta}{2}\sum_{\ell = 0}^\infty \left(\frac{\delta}{2}\right)^\ell < \delta.
$$
Moreover, 
$\Gamma_m + \Deltop = \Gamma_m\,(I + \ginv\Deltop)$ must be invertible
as 
$\inverse{(\Gamma_m + \Deltop)} = \gdinv\ginv$.

\VS
Finally, $\Enorm{\Deltop}\le 1/2$
implies $\Enorm{I + \Deltop} \in [1-\Enorm{\Deltop}, 1+\Enorm{\Deltop}]
\subseteq [1/2, 3/2]$. Thus
$I + \Deltop$ is Hermitian positive definite, with an eigendecomposition
$Z D \ctrans{Z}$, $D = \diag(d_1,d_2,\ldots,d_\ell)$, $d_j \in [1/2,3/2]$ for all
$j=1,2,\ldots,\ell$. $(I + \Deltop)^{\pm 1/2} = Z \, D^{\pm 1/2} \, \ctrans{Z}$.
$$
\Enorm{(I+\Deltop)^{1/2} - I} = \Enorm{D^{1/2} - I} \le
\max_{|x|\le \Enorm{\Deltop}} \left| (1+x)^{1/2} - 1 \right| \le \Enorm{\Deltop},
$$
where the last inequality holds because of $\Enorm{\Deltop}\le 1/2$.  Similarly,
$$
\Enorm{(I+\Deltop)^{-1/2} - I} = \Enorm{D^{-1/2} - I} \le
\max_{|x|\le \Enorm{\Deltop}} \left| (1+x)^{-1/2} - 1 \right| \le \Enorm{\Deltop},
$$
where the last inequality holds because of $\Enorm{\Deltop}\le 1/2$.
This completes the proof of the lemma.
\end{proof}

\begin{theorem}
\label{thm:subspace_bound_with_error}
Consider Algorithm FEAST where application of $\quadf(\surM)$ to $Q$ results in
$\quadf(\surM)Q + \Delta$. Specifically, Step 4 of FEAST becomes
$\iterate{Y}{k} \gets \quadf(\surM)\iterate{Q}{k-1} + \iterate{\Delta}{k-1}$.
Suppose $|\gamma_p|>0$, $\iterate{U}{0} \bydef \XtTop{p}B\iterate{Q}{0}$ is invertible,
and that $\iterate{{\cal Q}}{k}$ for all $k$ have dimension $p$ even in the presence of 
errors $\iterate{\Delta}{k}$s. Let $m$ be any integer $m \le p$ and $\iterate{E}{0}$ be the 
$(n-p)\times m$ matrix, the first $m$ 
columns of $(\XtBot{p}B\iterate{Q}{0})\iterate{U^{-1}}{0}$.
Define $\alpha \bydef \Enorm{\iterate{E}{0}}$.
Suppose there is a constant $\delta$, $0 < \delta < 1$ such that the computational errors
$\iterate{\Delta}{k}$ always satisfy
$$
\Enorm{\iterate{\Delta}{k}} 
               < \min\left\{
                 \frac{\delta |\gamma_m|}{2 \Enorm{C} \Enorm{\iterate{U^{-1}}{0}}},
                 \frac{\delta |\gamma_m|}{2 \Enorm{C} \sqrt{1+\alpha^2}}
                 \right\},
$$
and that $|\gamma_{p+1}/\gamma_m|\onepd \le 1/2$ and
$\tau \bydef 1 - |\gamma_{m+1}/\gamma_m|\onepd > 0$.  Define the sequence
$\ee{k}, \ff{k}$, $k=0,1,2,\ldots$, as in Lemma~\ref{lemma:ef_bounds} using the
$\alpha$ and $\delta$ here, and $|\gamma_{p+1}/\gamma_m|$ as $\beta$,
$|\gamma_{m+1}/\gamma_m|$ as $\beta'$. The followings hold.
\begin{enumerate}
\item
For $k=1$, as well as for all subsequent $k = 2, 3, \ldots$, as long as $\ee{k-1}\ge 2\delta$:
$$
\iterate{Q}{k} = \tmpb{k},
$$
where $\iterate{U}{k}$ is unitary of dimension $p\times p$,
\begin{eqnarray*}
\iterate{E}{k}
 & = &
 \GammaBot{p}\,\iterate{E}{k-1}\gpdinv{k-1}, \quad {\rm and} \\
\iterate{F}{k}
 & = &
\GammaBot{m}\,\iterate{F}{k-1}\gpdinv{k-1}  +
\iterate{\Delbot}{k-1}\gpdinv{k-1}.
\end{eqnarray*}
Furthermore, $\Enorm{\iterate{E}{k}} \le \ee{k}$ and $\Enorm{\iterate{F}{k}} \le \ff{k}$. 

\item
For $k=1$, as well as for all subsequent $k = 2, 3, \ldots$, as long as $\ee{k-1}\ge 2\delta$,
there are $m$ vectors $\vvs_j \in \iterate{{\cal Q}}{k}$, $j=1,2,\ldots,m$, such that
$$
\Bnorm{\vvx_j - \vvs_j} \le 
\alpha \left( \left|\frac{\gamma_{p+1}}{\gamma_m}\right|\onepd \right)^k +
\frac{\delta(1+\delta)}{2\tau},
$$
and also, alternatively,
\newcommand{\bbj}{\left|\frac{\gamma_{p+1}}{\gamma_j}\right|}
\newcommand{\bbm}{\left|\frac{\gamma_{p+1}}{\gamma_m}\right|}
$$
\Bnorm{\vvx_j - \vvs_j} \le 
\alpha \bbj^k + \alpha \delta \bbm^k \,\sum_{\ell=0}^{k-1} \onepd^\ell +
\frac{\delta(1+\delta)}{2\tau}.
$$
\end{enumerate}
Both bounds hold with
$\Bnorm{\vvx_j - \vvs_j}$ replaced by
$\Bnorm{(I - \iterate{P}{k})\vvx_j}$. 
\end{theorem}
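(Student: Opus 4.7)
The plan is to prove both claims by induction on $k$, using Lemma~\ref{lemma:subspace_bound_with_error} as the structural engine and Lemma~\ref{lemma:ef_bounds} for magnitude control. The base case $k=0$ comes from the same decomposition used at the start of the proof of Theorem~\ref{thm:subspace_bound}, with $\iterate{L}{0}=I$ and $\iterate{F}{0}=0$, so that $\Enorm{\iterate{E}{0}}=\alpha=\ee{0}$ and $\Enorm{\iterate{F}{0}}=0=\ff{0}$. For the inductive step, suppose the form and the bounds $\Enorm{\iterate{E}{k-1}}\le\ee{k-1}$, $\Enorm{\iterate{F}{k-1}}\le\ff{k-1}$ hold at step $k-1$. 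The assumption $\ee{k-1}\ge 2\delta$ together with Lemma~\ref{lemma:ef_bounds}(1) gives the monotonicity $\Enorm{\iterate{E}{k-1}}+\Enorm{\iterate{F}{k-1}}\le\alpha$, and then Lemma~\ref{lemma:subspace_bound_with_error} yields $\Enorm{\iterate{L}{k-1}}\le\sqrt{1+\alpha^2}$.

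The crucial bookkeeping is passing the hypothesis on $\Enorm{\iterate{\Delta}{k-1}}$ through to $\Enorm{\iterate{\Deltop}{k-1}}\le\delta|\gamma_m|/2$, the small-perturbation hypothesis required by Lemma~\ref{lemma:ef_bounds}(3). From the defining partition, both $\iterate{\Deltop}{k-1}$ and $\iterate{\Delbot}{k-1}$ are submatrices of $\inverse{X}\,\iterate{\Delta}{k-1}\,\iterate{U^{-1}}{k-1}\,\iterate{L}{k-1}$, and because $\inverse{X}=\ctrans{(CX)}\,C$ with $CX$ unitary, $\Enorm{\inverse{X}}=\Enorm{C}$. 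The two alternatives in the min hypothesis correspond exactly to the two subcases: for $k=1$ one uses $\iterate{L}{0}=I$ paired with $\Enorm{\iterate{U^{-1}}{0}}$, while for $k\ge 2$ one uses unitarity $\Enorm{\iterate{U^{-1}}{k-1}}=1$ combined with $\Enorm{\iterate{L}{k-1}}\le\sqrt{1+\alpha^2}$. Either way, $\Enorm{\iterate{\Deltop}{k-1}},\Enorm{\iterate{\Delbot}{k-1}}\le\delta|\gamma_m|/2$, so Lemma~\ref{lemma:ef_bounds}(3) supplies the invertibility of $\Gamma_m+\iterate{\Deltop}{k-1}$ and the bound $\Enorm{\gpdinv{k-1}}\le\onepd/|\gamma_m|$. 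Combined with $\Enorm{\GammaBot{p}}=|\gamma_{p+1}|$ and $\Enorm{\GammaBot{m}}=|\gamma_{m+1}|$, the matrix recurrences supplied by Lemma~\ref{lemma:subspace_bound_with_error} reduce precisely to the scalar recurrences defining $\ee{k}$ and $\ff{k}$, closing the induction on the first claim.

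For the second claim, I set $\vvs_j \bydef \vvx_j + \XBot{p}[\iterate{E}{k}]_j + \XBot{m}[\iterate{F}{k}]_j$, where $[\,\cdot\,]_j$ denotes the $j$-th column; since $\iterate{L}{k}$ is invertible, each $\vvs_j\in\iterate{{\cal Q}}{k}$. From $\ctrans{X}BX=I$ one has $\Bnorm{\XBot{p}\vvu}=\Bnorm{\XBot{m}\vvu}=\Enorm{\vvu}$, so the triangle inequality together with Lemma~\ref{lemma:ef_bounds}(1)(2) immediately yields $\Bnorm{\vvx_j-\vvs_j}\le\ee{k}+\ff{k}\le\alpha(\bb)^k+\delta\onepd/(2\tau)$, the first stated bound. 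For the sharper alternative, I would track the $j$-th column of $\iterate{E}{k}$ individually: applying $\gpdinv{k-1}$ to the $j$-th coordinate vector and using the identity that $\gpdinv{k-1}$ acting on this vector equals $\gamma_j^{-1}$ times it minus $\ginv\iterate{\Deltop}{k-1}\gpdinv{k-1}$ acting on it, splits $[\iterate{E}{k}]_j$ into a clean propagation piece of norm at most $\bbj\Enorm{[\iterate{E}{k-1}]_j}$ and an injected-error piece of norm at most $\bbm\cdot\delta\onepd/2\cdot\Enorm{\iterate{E}{k-1}}$. Unrolling $k$ steps, bounding each $\Enorm{\iterate{E}{\ell-1}}$ by the already-proven $\alpha\bbm^{\ell-1}\onepd^{\ell-1}$, and using $\bbj/\bbm\le 1$ collapses the accumulated errors into the geometric sum $\alpha\delta\bbm^k\sum_{\ell=0}^{k-1}\onepd^\ell$; adding the column bound $\Enorm{[\iterate{F}{k}]_j}\le\ff{k}$ via Lemma~\ref{lemma:ef_bounds}(2) completes the alternative estimate. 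The closing statement about $\Bnorm{(I-\iterate{P}{k})\vvx_j}$ follows from the minimizing property of the $B$-orthogonal projector.

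The main technical obstacle is this column-wise recurrence underlying the alternative bound: the injection term at each step unavoidably couples the $j$-th column to \emph{all} of $\iterate{E}{k-1}$ through $\iterate{\Deltop}{k-1}$, so the injection must be bounded by the full norm $\Enorm{\iterate{E}{k-1}}$, and it is precisely this cross-coupling that forces $\bbm$ rather than $\bbj$ in the correction term. A secondary piece of care is ensuring that the min hypothesis on $\Enorm{\iterate{\Delta}{k-1}}$ simultaneously handles both the nonunitary $\iterate{U}{0}$ at $k=1$ and the unitary $\iterate{U}{k-1}$ (paired with the $\sqrt{1+\alpha^2}$ bound on $\iterate{L}{k-1}$) at subsequent steps.
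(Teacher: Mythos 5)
Your proposal is correct and follows essentially the same route as the paper's proof: induction on $k$ driven by Lemma~\ref{lemma:subspace_bound_with_error} for the structural recurrences and Lemma~\ref{lemma:ef_bounds} for the magnitude control, with the two branches of the $\min$ hypothesis matched to the $k=1$ case (nonunitary $\iterate{U}{0}$, $\iterate{L}{0}=I$) and the $k\ge 2$ case ($\Enorm{\iterate{L}{k-1}}\le\sqrt{1+\alpha^2}$ via the monotonicity of $\ee{k}+\ff{k}$), and the refined column-wise bound obtained by splitting $\gpdinv{k-1}$ into $\ginv$ plus a correction of norm $O(\delta)/|\gamma_m|$ and unrolling. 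The cross-coupling observation that forces $\bbm$ in the injected-error term is exactly the point the paper's unrolled estimate relies on.
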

\begin{proof}
From definitions of $\iterate{E}{0}$ and $\iterate{U}{0}$, 
\begin{eqnarray*}
\iterate{Q}{0} 
  & = & 
  [\;\; \XTop{m} + \XBot{p}\iterate{E}{0} + \XBot{m}\iterate{F}{0} 
        \phantom{kk} \iterate{V}{0} \;]\;\iterate{U}{0}, \\
\iterate{\Delta}{0} 
  & = & [\;\; \XTop{m}\iterate{\Deltop}{0} + \XBot{m}\iterate{\Delbot}{0}
         \phantom{kkkkkk}
           X \iterate{\Deldc}{0} \;] \;\iterate{U}{0},
\end{eqnarray*}
where, $\iterate{F}{0}$ is the zero matrix of dimension $(n-m)\times m$,
$$
\inverse{X}\,\iterate{\Delta}{0}\,\iterate{U^{-1}}{0} = \Deltrans{0}.
$$
Because $\Enorm{\iterate{\Deltop}{0}}, \Enorm{\iterate{\Delbot}{0}}
\le \Enorm{C}\Enorm{\iterate{\Delta}{0}}\Enorm{\iterate{U^{-1}}{0}} < \delta |\gamma_m|/2$,
Lemma~\ref{lemma:ef_bounds} shows that $\Gamma_m + \iterate{\Deltop}{0}$ is invertible.
Lemma~\ref{lemma:subspace_bound_with_error} shows that (1) holds for $k=1$.
We use an induction argument. Suppose the followings hold for $k=1,2,\ldots,K-1$ for
some $K \ge 2$:
\begin{equation}
\label{eqn:tmp_induction}
\begin{array}{c c l}
\iterate{Q}{k} & = & \tmpb{k}, \\
\iterate{E}{k} & = & \GammaBot{p}\,\iterate{E}{k-1}\,\gpdinv{k-1}, \\
\iterate{F}{k} & = & \GammaBot{m}\,\iterate{F}{k-1}\,\gpdinv{k-1} +
                                   \iterate{\Delbot}{k-1}\,\gpdinv{k-1},
\end{array}
\end{equation}
where $\Enorm{\iterate{L}{k}} \le \sqrt{1 + (\ee{k} + \ff{k})^2}$, 
$\Enorm{\iterate{E}{k}} \le \ee{k}$,
$\Enorm{\iterate{F}{k}} \le \ff{k}$.
Suppose $\ee{K-1}\ge 2\delta$, then $\ee{K-2}\ge 2\delta$. Lemma~\ref{lemma:ef_bounds} shows that
$\ee{K-1}+\ff{K-1} \le \ee{K-2} + \ff{K-2} \le \cdots \le \ee{0} + \ff{0} = \alpha$.
Consequently, $\Enorm{\iterate{L}{K-1}} \le \sqrt{1 + \alpha^2}$. Examining the partition
$$
\inverse{X}\,\iterate{\Delta}{K-1}\,\iterate{U^{-1}}{K-1} 
\left[
\begin{array}{c | c}
\iterate{L}{K-1} &  \\ \hline
                 & I
\end{array}
\right] = \Deltrans{K-1},
$$
and noting that $\iterate{U}{k}$ are all unitary for $k\ge 1$, we conclude that
\begin{eqnarray*}
\Enorm{\iterate{\Deltop}{K-1}},
\Enorm{\iterate{\Delbot}{K-1}}
 & \le &
\Enorm{C}\,
\Enorm{\iterate{\Delta}{K-1}}\,
\Enorm{\iterate{L}{K-1}}, \\
 & \le &
\Enorm{C}\,
\Enorm{\iterate{\Delta}{K-1}}\,
\sqrt{1+\alpha^2},  \\
 & < & \delta\,|\gamma_m| / 2.
\end{eqnarray*}
Lemma~\ref{lemma:ef_bounds} shows that $\Gamma_m + \iterate{\Deltop}{K-1}$ is invertible. And hence
by Lemma~\ref{lemma:subspace_bound_with_error}, Equation~\ref{eqn:tmp_induction} holds for $k=K$
as well. Furthermore, 
\begin{eqnarray*}
\Enorm{\iterate{E}{K}} 
 & \le & |\gamma_{p+1}| \, \Enorm{\iterate{E}{K-1}} \,
         \Enorm{\gdinvk{K-1}} \, \Enorm{\ginv}, \\
 & \le & |\gamma_{p+1}/\gamma_m| \onepd \ee{K-1}, \\
 &  =  & \ee{K}, \\
\Enorm{\iterate{F}{K}}
 & \le & |\gamma_{m+1}/\gamma_m| \,\onepd \, \Enorm{\iterate{F}{K-1}}  +
         \delta \onepd/2, \\
 & \le & |\gamma_{m+1}/\gamma_m| \,\onepd \, \ff{K-1} + \delta\,\onepd/2, \\
 &  =  & \ff{K}.
\end{eqnarray*}
This establishes the first point of the theorem. 

\newcommand{\bbm}{\left|\frac{\gamma_{p+1}}{\gamma_m}\right|}
\newcommand{\bbj}{\left|\frac{\gamma_{p+1}}{\gamma_j}\right|}
\VS
For $k=1,2,\ldots$, and as long as $\ee{k-2} \ge 2\delta$, let
$\vvs_j$ be the $j$-th column of $\XTop{m} + \XBot{p}\iterate{E}{k} + \XBot{m}\iterate{F}{k}$.
The first bound is easy to obtain:
\begin{eqnarray*}
\Bnorm{\vvx_j - \vvs_j} 
 & \le & 
 \Bnorm{\XBot{p}\iterate{E}{k} + \XBot{m}\iterate{F}{k}}, \\
 & \le & 
 \Bnorm{\XBot{p}\iterate{E}{k}} + \Bnorm{\XBot{m}\iterate{F}{k}}, \\
 & = &
 \Enorm{\iterate{E}{k}} + \Enorm{\iterate{F}{k}}, \\
 & \le &  \ee{k} + \ff{k}, \\
 & \le &  \alpha \left( \bbm \onepd\right)^k + \frac{\delta\onepd}{2\tau}.
\end{eqnarray*}
This bound is independent of the specific value of $j$, $1\le j \le m$,
but is given in terms of $m$. We can refine this by examining the $j$-th column
of $\iterate{E}{k}$ more closely. Denote the columns of $\iterate{E}{k}$
by
\newcommand{\ejk}[2]{\vve_{#1}^{(#2)}}
$$
\iterate{E}{k} = [\;\;\ejk{1}{k} \ejk{2}{k} \; \cdots \; \ejk{m}{k} \;\;].
$$
Noting that 
\begin{eqnarray*}
\iterate{E}{k} & = & \GammaBot{p}\,\iterate{E}{k-1}\, \gpdinv{k-1}  \\
               & = &
                 \GammaBot{p}\,\iterate{E}{k-1}\,\ginv +
                 \GammaBot{p}\,\iterate{E}{k-1}\,\left( \gdinvk{k-1} - I \right)\,\ginv, \\
\Enorm{\ejk{j}{k}}
 & \le &
 \bbj\, \Enorm{\ejk{j}{k-1}} + \bbm\,\delta\,\ee{k-1}, \\
 & \le &
 \bbj \left( \bbj\,\Enorm{\ejk{j}{k-2}} + \bbm\,\delta\,\ee{k-2}\right) + 
 \bbm\,\delta\,\ee{k-1}, \\
 & \le &
 \bbj^2 \Enorm{\ejk{j}{k-2}} + \delta\,\bbm^2 \,\ee{k-2} + 
 \delta\,\bbm\,\ee{k-1}, \; \hbox{(as $\bbj \le \bbm$)},\\
 & \le &
 \cdots \cdots \cdots, \\
 & \le &
 \bbj^k \,\Enorm{\ejk{j}{0}} + \delta\,\sum_{\ell=0}^{k-1} \bbm^{k-\ell} \,\ee{\ell}, \\
 & \le &
 \bbj^k \,\Enorm{\ejk{j}{0}} + \alpha\,\delta\,\sum_{\ell=0}^{k-1} \bbm^{k-\ell} \,
          \bbm^\ell\,\onepd^\ell, \\
 & \le &
 \alpha \,\bbj^k  + \alpha\,\delta\,\bbm^k\;\sum_{\ell=0}^{k-1} \onepd^\ell.
\end{eqnarray*}
Therefore, an alternative bound on $\Bnorm{\vvx_j - \vvs_j}$ is
$$
\Bnorm{\vvx_j - \vvs_j} \le
 \alpha \,\bbj^k  + \alpha\,\delta\,\bbm^k\;\sum_{\ell=0}^{k-1} \onepd^\ell +
\frac{\delta\onepd}{2\tau}.
$$
Both bounds apply to $\Bnorm{(I-\iterate{P}{k})\vvx_j}$ as by definition it is
$\min_{\vvs \in \iterate{{\cal Q}}{k}} \Bnorm{\vvx_j-\vvx}$.
This completes the proof of Theorem~\ref{thm:subspace_bound_with_error}.
\end{proof}

\VS
Theorems~\ref{thm:subspace_bound} shows that for each $\vvx_j$ of the $m$ eigenvectors
$\vvx_1, \vvx_2, \ldots, \vvx_m$, its distance to $\iterate{{\cal Q}}{k}$ decreases
to zero at rate of $|\gamma_{p+1}/\gamma_j|^k$. Theorem~\ref{thm:subspace_bound_with_error}
shows that the error in applying $\quadf(\surM)$ does not affect the convergence fundamentally.
The convergence rate is degraded (very slightly) to $|\gamma_{p+1}/\gamma_m|^k \onepd^k$, and
the distance may only decrease down to a certain nonzero threshold, of the order $\delta$
that is commensurate with the accuracy of the linear solvers used to compute $\quadf(\surM)Q$.
In particular, iterative solvers are suitable for implementing $\quadf(\surM)$.


\section{Convergence of Eigenvalues and Residuals}
\label{section:eigenproblems}

The previous section shows that if 
the subspace dimension $p$ in Algorithm FEAST is chosen large enough so that
$|\gamma_{p+1}/\gamma_\lambnum| \ll 1$, then the generated subspaces
$\iterate{{\cal Q}}{k} = \spann(\iterate{Y}{k}) = \spann(\iterate{Q}{k})$ will capture
rapidly the eigenvectors $\vvx_1, \vvx_2, \ldots, \vvx_\lambnum$. In fact, if
$|\gamma_{p+1}/\gamma_m| \ll 1$ for some $m$, $\lambnum < m \le p$, the subspace will
also capture the additional eigenvectors $\vvx_{\lambnum+1},\vvx_{\lambnum+2},\ldots,\vvx_m$ very well.
This scenario is typical when there are eigenvalues outside of $\lambintc$ but close to the boundaries. 
This means that $\gamma_{\lambnum} \approx \gamma_{\lambnum+1} \approx \cdots \approx \gamma_m$ for some 
$m > \lambnum$. 
Thus $|\gamma_{p+1}/\gamma_e| \ll 1$ implies $|\gamma_{p+1}/\gamma_m| \ll 1$ as well.

\newcommand{\Qtil}{\widetilde{Q}}
\VS
Now to complete the story, we must show how to make use of these subspaces that have presumably captured
the wanted eigenvectors, to actually obtaining the target eigenpairs $(\lambda_j,\vvx_j)$,
$j=1,2,\ldots,\lambnum$.
Specifically, we show that $m\ge \lambnum$ of the $p$ eigenvalues of $\iterate{\Lamred}{k}$
converge to $\lambda_1,\lambda_2,\ldots,\lambda_m$, so as the corresponding vectors
in $\iterate{Q}{k} = Y\iterate{\Xred}{k}$.
Consider Steps 5 to 7 of FEAST at a particular iteration, omitting the subscript $k$, we
have
\begin{equation}
\label{eqn:QAQ_Lambdahat}
\ctrans{\Xred}\,\ctrans{Y}\,A\,Y\,\Xred = (\ctrans{\Xred}\,\ctrans{Y}\,B\,Y\,\Xred)\,\Lamred
\;\Rightarrow\;\ctrans{Q}\,A\,Q = \Lamred.
\end{equation}
Theorems~\ref{thm:subspace_bound} and~\ref{thm:subspace_bound_with_error} show that $Q$ is of the
form
$$
Q = [\;\; (\XTop{m} + \XBot{m} G)\inverse{L} \;\;\; V \;\;] \, U,
$$
where $G$ encapsulates the $E$ and $F$ terms: $\XBot{m}\,G = \XBot{p}\,E + \XBot{m}\,F$.
$Q$ is $B$-orthonormal, and $U$ is unitary of dimension $p\times p$.
The next lemma analyzes the structure of $\spann(Q)$, which is key to convergence of
eigenpairs (Theorem~\ref{thm:eigen_convergence}) and to useful properties of the $\Bred$ and $Y$ matrices
(Theorem~\ref{thm:e_estimate}).

\begin{lemma}
\label{lemma:Q_structure}
\newcommand{\IandH}{\left[\begin{array}{c | c} I_m & \\ \hline  & H \end{array}\right]}
\newcommand{\DandH}[4]{
\left[\begin{array}{c | c} {#1} & \\ \hline  & {#2}\,{#3}\,{#4} \end{array}\right]}

Consider a $n\times p$ $B$-orthonormal matrix $Q$ of the form
$$
Q = [\; (\XTop{m} + \XBot{m}G)\inverse{L} \;\;\;\; V \;] \, U
$$
for some $m\le p$ where $\Enorm{G} = \epsilon \le 1/2$,
$V$ is of dimension $n\times (p-m)$,
$U$ is unitary of dimension $p \times p$, and
$\inverse{L} = \left[ \; \ctrans{ (\XTop{m} + \XBot{m}G) } \, B \,
(\XTop{m} + \XBot{m}G) \; \right]^{-1/2}.$
Then
\begin{enumerate}
\item 
$\Enorm{\inverse{L}-I_m} \le \epsilon^2$. $V$ can be represented as
$V = \XTop{m}\,S + \XBot{m}\,H\,R$ where $\Enorm{S} \le \epsilon$,
$H$ is $(n-m)\times (p-m)$ where $\ctrans{H} H = I_{p-m}$, and
$\Enorm{R-I_{p-m}} \le \epsilon^2$.
\item
\begin{eqnarray*}
\ctrans{X}\,B\,Q 
 & = &
 \left( \IandH + \Theta \right)\,U, \\
\Theta
 & = &
 \left[\begin{array}{c|c} \Theta_{11} & \Theta_{12} \\\hline \Theta_{21} & \Theta_{22} \end{array}
 \right], \quad \hbox{$\Theta_{11}$ is $m\times m$},
\end{eqnarray*}
$\Enorm{\Theta_{11}},\Enorm{\Theta_{22}} \le \epsilon^2$, 
$\Enorm{\Theta_{12}},\Enorm{\Theta_{21}} \le (1+\epsilon^2)\epsilon$. 
\item
Given any $n \times n$ real diagonal matrix $D = \diag(d_1,d_2,\ldots,d_n) =
\diag(\TSec{D}{m},\BSec{D}{m})$,
\begin{eqnarray*}
(\ctrans{Q}\,B\,X)\,D\,
(\ctrans{X}\,B\,Q) 
 & = &
 \ctrans{U}\,
 \left( \DandH{\TSec{D}{m}}{\ctrans{H}}{\BSec{D}{m}}{H} + \Delta \right)\,U, \\
\Delta
 & = &
 \left[\begin{array}{c|c} \Delta_{11} & \Delta_{12} \\\hline \ctrans{\Delta}_{12} & \Delta_{22} \end{array}
 \right], \quad \hbox{$\Delta_{11}$ is $m\times m$},
\end{eqnarray*}
$\Enorm{\Delta_{11}},\Enorm{\Delta_{22}} \le 4 \Enorm{D} \epsilon^2$, 
$\Enorm{\Delta_{12}} \le 4 \Enorm{D} \epsilon$. 
\end{enumerate}
\end{lemma}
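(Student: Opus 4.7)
The plan is to reduce every assertion to a two-by-two block computation, exploiting the $B$-orthogonality identities $\ctrans{\XTop{m}}\,B\,\XTop{m} = I_m$, $\ctrans{\XBot{m}}\,B\,\XBot{m} = I_{n-m}$, and $\ctrans{\XTop{m}}\,B\,\XBot{m} = 0$, and invoking the scalar-type square-root perturbation bound of Lemma~\ref{lemma:ef_bounds}(4) wherever a symmetric inverse square root appears.

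For item (1), expanding
\[
\ctrans{L}L \;=\; \ctrans{(\XTop{m}+\XBot{m}G)}\,B\,(\XTop{m}+\XBot{m}G) \;=\; I_m + \ctrans{G}G
\]
shows $L^{-1} = (I_m + \ctrans{G}G)^{-1/2}$, and since $\|\ctrans{G}G\| \le \epsilon^2 \le 1/4$, Lemma~\ref{lemma:ef_bounds}(4) immediately yields $\|L^{-1}-I_m\|\le \epsilon^2$. For the decomposition of $V$, I write $V = \XTop{m}S + \XBot{m}W$; then $B$-orthogonality of $V$ to the first block of $Q$ forces $S = -\ctrans{G}W$, while $\ctrans{V}BV = I_{p-m}$ becomes $\ctrans{W}(I_{n-m}+G\ctrans{G})W = I_{p-m}$. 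This yields $\ctrans{W}W \preceq I_{p-m}$, hence $\|W\|\le 1$. I then take the polar decomposition $W = HR$ with $\ctrans{H}H = I_{p-m}$ and $R = (\ctrans{W}W)^{1/2}$: from the identity just noted, $R^2 = I_{p-m} - \ctrans{W}G\ctrans{G}W$ with $\|\ctrans{W}G\ctrans{G}W\|\le\epsilon^2\le 1/2$, so Lemma~\ref{lemma:ef_bounds}(4) again gives $\|R-I_{p-m}\|\le\epsilon^2$, and $\|S\| = \|\ctrans{G}W\| \le \epsilon$.

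Item (2) is then a single substitution: using $\ctrans{X}B\XTop{m}$ and $\ctrans{X}B\XBot{m}$ as the standard-basis block columns together with the decomposition of $V$ from item (1),
\[
\ctrans{X}BQ \;=\; \begin{bmatrix} L^{-1} & S \\ GL^{-1} & HR \end{bmatrix} U,
\]
so $\Theta_{11} = L^{-1}-I_m$, $\Theta_{12} = S$, $\Theta_{21} = GL^{-1}$, $\Theta_{22} = H(R-I_{p-m})$; the four claimed bounds drop out of item (1) together with $\|H\|=1$ and $\|L^{-1}\|\le 1+\epsilon^2$. For item (3), set $K \bydef \mathrm{diag}(I_m,H)$, so
\[
(\ctrans{Q}BX)\,D\,(\ctrans{X}BQ) \;=\; \ctrans{U}\bigl(\ctrans{K}DK + \ctrans{\Theta}DK + \ctrans{K}D\Theta + \ctrans{\Theta}D\Theta\bigr)U,
\]
and note $\ctrans{K}DK = \mathrm{diag}(\TSec{D}{m},\ctrans{H}\BSec{D}{m}H)$ is exactly the main term, leaving $\Delta$ as the sum of the remaining three terms, which is automatically Hermitian because $D$ is. The $(1,1)$ block of $\Delta$ equals $\TSec{D}{m}\Theta_{11} + \ctrans{\Theta_{11}}\TSec{D}{m} + \ctrans{\Theta_{11}}\TSec{D}{m}\Theta_{11} + \ctrans{\Theta_{21}}\BSec{D}{m}\Theta_{21}$, bounded by $\|D\|\bigl(2\epsilon^2 + \epsilon^4 + \epsilon^2(1+\epsilon^2)^2\bigr) \le 4\|D\|\epsilon^2$ when $\epsilon\le 1/2$; the $(2,2)$ block is bounded analogously, and the $(1,2)$ block collapses to at most $2\|D\|\epsilon(1+\epsilon^2)^2 \le 4\|D\|\epsilon$.

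The only step with any subtlety is the polar argument in item (1): one must first extract $\|W\|\le 1$ from $\ctrans{W}W \preceq I_{p-m}$ before Lemma~\ref{lemma:ef_bounds}(4) can legitimately be applied to $(I_{p-m} - \ctrans{W}G\ctrans{G}W)^{1/2}$. Everything downstream is disciplined block bookkeeping, the main nuisance being to keep track of which sub-blocks of $\Theta$ enter at order $\epsilon$ and which at order $\epsilon^2$ when assembling $\Delta$.
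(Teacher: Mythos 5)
Your proposal is correct and follows essentially the same route as the paper's proof: expand $\ctrans{L}L = I_m + \ctrans{G}G$ and invoke the square-root perturbation bound, decompose $V$ in the $\XTop{m},\XBot{m}$ basis, use $B$-orthogonality to force $S = -\ctrans{G}W$ and the normalization $\ctrans{V}BV=I_{p-m}$ to control $\ctrans{W}W$, polar-factor $W=HR$, then substitute into $\ctrans{X}BQ$ and expand the quadratic form blockwise. The only cosmetic difference is that you obtain $\Enorm{W}\le 1$ from the identity $\ctrans{W}(I+G\ctrans{G})W=I_{p-m}$ rather than from the orthonormality of $CV$ and $C\XBot{m}$, and your item (3) writes the cross terms with the explicit factor $K=\diag(I_m,H)$, which is the dimensionally precise form of the paper's expansion.
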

\begin{proof}
\newcommand{\IandH}{\left[\begin{array}{c | c} I_m & \\ \hline  & H \end{array}\right]}
\newcommand{\DandH}[4]{
\left[\begin{array}{c | c} {#1} & \\ \hline  & {#2}\,{#3}\,{#4} \end{array}\right]}
Since $\inverse{L} = (I_m + \ctrans{G}G)^{-1/2}$, $\Enorm{\ctrans{G}G} \le \epsilon^2 \le 1/4 \le 1/2$,
Lemma~\ref{lemma:ef_bounds} shows that $\Enorm{\inverse{L}-I_m} \le \epsilon^2$. $Q$ is 
$B$-orthonormal, and so is $Q\ctrans{U}$ as $U$ is a unitary matrix by assumption. 
Represent $V$ in the basis vectors $\XTop{m}$ and $\XBot{m}$:
$$
V = (\XTop{m}\XtTop{m}B + \XBot{m}\XtBot{m}B) V = \XTop{m}S + \XBot{m}T.
$$
Note that $CV$, $C\XTop{m}$, and $C\XBot{m}$ have orthonormal columns, and 
$C\XTop{m}$ is orthogonal with $C\XBot{m}$. Thus
$\Enorm{S}, \Enorm{T} \le 1$. $V$ being $B$-orthogonal
with the first $m$ columns of $Q\ctrans{U}$ implies
$S + \ctrans{G}T = 0$. Therefore,
$\Enorm{S} = \Enorm{-\ctrans{G}T} \le \Enorm{\ctrans{G}} = \epsilon.$
$\ctrans{V}\,B\,V = I_{p-m}$ then implies
$$
\ctrans{T}T = I_{p-m} + (-\ctrans{S}S), \qquad
\Enorm{-\ctrans{S}S} \le \epsilon^2 \le 1/2.
$$
Lemma~\ref{lemma:ef_bounds} shows that $R \bydef (\ctrans{T}T)^{1/2}$ satisfies
$\Enorm{R - I_{p-m}} \le \epsilon^2$.
Clearly,  $H \bydef T \inverse{R}$ 
leads to $\ctrans{H}H = I_{p-m}$.
Summarizing, $V = \XTop{m}\,S + \XBot{m}\,H\,R$, 
$\Enorm{S} \le \epsilon$, $\ctrans{H}H=I_{p-m}$,
and $\Enorm{R-I_{p-m}} \le \epsilon^2$.
This establishes the first point of the lemma.

\VS
Since 
$Q = [\; (\XTop{m} + \XBot{m}G)\inverse{L} \;\;\; \XTop{m}\,S + \XBot{m}\,HR \;] \, U$,
\begin{eqnarray*}
\ctrans{X}\,B\,Q
 & = &
 \left[\begin{array}{c c} \inverse{L} & S \\ G\inverse{L} & HR \end{array}\right]\,U, \\
 & = &
 \left( \IandH + 
        \left[
        \begin{array}{c c}
        \inverse{L}-I_m & S \\
        G\inverse{L}    & H(R - I_{p-m})
        \end{array}
        \right]
 \right) \, U, \\
 & = &
 \left( \IandH + \Theta \right)\,U,
\end{eqnarray*}
$\Enorm{\Theta_{11}}, \Enorm{\Theta_{22}} \le \epsilon^2$,
$\Enorm{\Theta_{21}} \le (1+\epsilon^2)\epsilon$,
$\Enorm{\Theta_{12}} \le \epsilon \le (1+\epsilon^2)\epsilon$. This establishes the
second point of the lemma.

\VS
Let $D = \diag(\TSec{D}{m},\BSec{D}{m})$ be any $n\times n$ real diagonal matrix. Using
the structure of $\ctrans{X}BQ$ just established, we have
\begin{eqnarray*}
(\ctrans{Q}\,B\,X)\,D\,
(\ctrans{X}\,B\,Q)
 & = &
 \ctrans{U}\,\left(
 \DandH{\TSec{D}{m}}{\ctrans{H}}{\BSec{D}{m}}{H} +
 \ctrans{\Theta}D + D\Theta + \ctrans{\Theta}D\Theta
 \right)\, U, \\
 & = &
 \ctrans{U}\, \left(
 \DandH{\TSec{D}{m}}{\ctrans{H}}{\BSec{D}{m}}{H} + \Delta
 \right) \, U.
\end{eqnarray*}
Bounding $\Delta$ in a straightforward manner yields
$$
\begin{array}{l l l l l}
\Enorm{\Delta_{11}} 
 & \le 
 & \Enorm{D}\left(2\epsilon^2 + \epsilon^4 + (1+\epsilon^2)^2\epsilon^2\right)
 & \le
 & 4 \Enorm{D}\epsilon^2, \\
\Enorm{\Delta_{22}} 
 & \le 
 & \Enorm{D}\left(2\epsilon^2 + \epsilon^4 + \epsilon^2 \right)
 & \le
 & 4 \Enorm{D}\epsilon^2, \\
\Enorm{\Delta_{12}} 
 & \le 
 & \Enorm{D}\left((1+\epsilon^2)^2 \epsilon +  (1+\epsilon^2)\epsilon\right)
 & \le
 & 4 \Enorm{D}\epsilon, 
\end{array}
$$
all making use of the assumption $\epsilon\le 1/2$.
\end{proof}

\VS
\begin{theorem}
\label{thm:eigen_convergence}
\newcommand{\epsk}{\epsilon_k}
\newcommand{\etak}{\eta_k}
\newcommand{\lamhatj}{\widehat{\lambda}_j}
\newcommand{\xhatj}{\widehat{\vvx}_j}
Consider Algorithm FEAST that exhibits subspace convergence as in 
Theorem~\ref{thm:subspace_bound_with_error}. For
any iteration $k$, $\iterate{Q}{k}$ is represented as in Lemma~\ref{lemma:Q_structure} 
$$
\iterate{Q}{k} 
= [\; (\XTop{m} + \XBot{m}\iterate{G}{k})\inverse{\iterate{L}{k}} \phantom{kkkk} 
\XTop{m}\,\iterate{S}{k} + \XBot{m}\,\iterate{H}{k}\iterate{R}{k} \;] \, \iterate{U}{k}.
$$
Denote $\Enorm{\iterate{G}{k}}$ by $\epsk$ and define the spectral gap $\etak$ as
$$
\etak \bydef \left\{
\begin{array}{c l}
\min_{\lambda \in {\rm eig}(\TSec{\Lambda}{m}), 
      \mu \in {\rm eig}( \iterate{\ctrans{H}}{k}\BSec{\Lambda}{m} \iterate{H}{k}) }
|\lambda - \mu|/\Enorm{\Lambda}
 & \hbox{if $m < p$}, \\
\infty & \hbox{if $m=p$.}
\end{array}
\right.
$$
As long as $\epsk \le 1/2$, there are $m$ eigenpairs $(\lamhatj,\xhatj)$ among the $p$ eigenpairs
in $(\iterate{\Lamred}{k},\iterate{\Xred}{k})$ such that
$$
|\lambda_j - \lamhatj| \le
4\Enorm{\Lambda}\left(
\epsk^2 + \min\{ \epsk, 4\epsk^2/\etak \}\right),
$$
and
$$
\Enorm{A\vvq_j - \lamhatj B \vvq_j} \le
12 \Enorm{C}\,\Enorm{\Lambda} \epsk\, (1 + 1/\etak).
$$
\end{theorem}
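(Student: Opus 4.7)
The strategy is to recast the reduced matrix $\iterate{\Lamred}{k}$ as a Hermitian block-perturbation through Lemma~\ref{lemma:Q_structure}, derive the eigenvalue bound from classical perturbation theory (refined by a Sylvester block-diagonalization when $\eta_k$ is favorable), and derive the residual bound from the eigen-equation of the reduced matrix together with the gap $\eta_k$. For brevity I drop the iteration index $k$ and write $\epsilon = \epsk$, $\eta = \etak$, with $G, L, H, U$ denoting their iteration-$k$ instances.

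From $A = B\surM = B(X\Lambda\ctrans{X}B)$ (Equation~\ref{eqn:eigen_decomposition}) and $\ctrans{Q}BQ = I_p$, one has $\Lamred = \ctrans{Q}AQ = (\ctrans{X}BQ)^{\ast}\Lambda(\ctrans{X}BQ)$. Applying Lemma~\ref{lemma:Q_structure} part 3 with $D = \Lambda$ produces $U\,\Lamred\,\ctrans{U} = D_0 + \Delta$, where $D_0 = \mathrm{diag}(\TSec{\Lambda}{m},\,\ctrans{H}\BSec{\Lambda}{m}H)$ has the target eigenvalues $\{\lambda_1, \ldots, \lambda_m\}$ in its top block, and $\Delta$ is Hermitian with $\Enorm{\Delta_{11}}, \Enorm{\Delta_{22}} \le 4\Enorm{\Lambda}\epsilon^2$ and $\Enorm{\Delta_{12}} \le 4\Enorm{\Lambda}\epsilon$. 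Since $\Lamred$ is diagonal, the Ritz values coincide with the eigenvalues of $D_0+\Delta$. Weyl's inequality then yields an indexing with $|\hat\lambda_j - \lambda_j| \le \Enorm{\Delta}$, giving the $\epsilon$-branch. The sharper $\epsilon^2/\eta$-branch follows from a Sylvester block-diagonalization: solving $\TSec{\Lambda}{m}Y - Y(\ctrans{H}\BSec{\Lambda}{m}H) = \Delta_{12}$ using the spectral separation $\eta\Enorm{\Lambda}$ gives $\Enorm{Y} \le 4\epsilon/\eta$, and a near-identity unitary similarity built from $Y$ block-diagonalizes $D_0+\Delta$ with top-block perturbation of size $\Enorm{\Delta_{11}} + O(\Enorm{\Delta_{12}}^2/(\eta\Enorm{\Lambda})) = O(\Enorm{\Lambda}\epsilon^2(1+1/\eta))$. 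Taking the smaller of the two branches matches the stated eigenvalue bound.

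For the residual, let $(\hat\lambda_j, \vvq_j)$ be a good Ritz pair with $\vvq_j = Q\vve_{i_j}$ (the Ritz vectors are columns of $Q$ since $\Lamred$ is diagonal). From $A = B\surM$ and $\surM = X\Lambda\ctrans{X}B$, we have $A\vvq_j - \hat\lambda_j B\vvq_j = BX(\Lambda - \hat\lambda_j I)(\ctrans{X}BQ)\vve_{i_j}$. Since $CX$ is unitary, $\Enorm{BX\vvu} \le \Enorm{C}\Enorm{\vvu}$, so it suffices to bound $\Enorm{(\Lambda-\hat\lambda_j I)(P+\Theta)U\vve_{i_j}}$ with $P = \mathrm{diag}(I_m, H)$. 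Let $U\vve_{i_j}$ split into $\vvw_1 \in \complex^m$ and $\vvw_2 \in \complex^{p-m}$. This vector decomposes into three pieces: a $\Theta$-contribution of size $O(\Enorm{\Lambda}\epsilon)$ from $\Enorm{\Theta} = O(\epsilon)$; a top piece $(\TSec{\Lambda}{m}-\hat\lambda_j I)\vvw_1 = -\Delta_{11}\vvw_1 - \Delta_{12}\vvw_2 = O(\Enorm{\Lambda}\epsilon)$ via the top component of the eigen-equation $(D_0+\Delta)U\vve_{i_j} = \hat\lambda_j U\vve_{i_j}$; and a bottom piece $(\BSec{\Lambda}{m}-\hat\lambda_j I)H\vvw_2$, whose norm is at most $2\Enorm{\Lambda}\Enorm{H\vvw_2} = 2\Enorm{\Lambda}\Enorm{\vvw_2}$ by $\ctrans{H}H = I_{p-m}$. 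The bottom component of the eigen-equation, $(\ctrans{H}\BSec{\Lambda}{m}H - \hat\lambda_j I)\vvw_2 = -\ctrans{\Delta}_{12}\vvw_1 - \Delta_{22}\vvw_2$, combined with the gap $\eta$ (and $\hat\lambda_j$ near $\mathrm{eig}(\TSec{\Lambda}{m})$), yields $\Enorm{\vvw_2} = O(\epsilon/\eta)$ and hence the bottom piece is $O(\Enorm{\Lambda}\epsilon/\eta)$. Summing the three pieces and multiplying by $\Enorm{C}$ reproduces the claimed residual bound.

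The main technical obstacle is the mismatch between $\BSec{\Lambda}{m}$ (in the bottom residual piece) and $\ctrans{H}\BSec{\Lambda}{m}H$ (for which $\eta$ is defined); routing the bound through $\Enorm{\vvw_2}$ and using $\ctrans{H}H = I_{p-m}$ resolves it without imposing any extra hypothesis on the unprojected $\BSec{\Lambda}{m}$.
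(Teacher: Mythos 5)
Your proposal follows essentially the same route as the paper's proof: both recast $\iterate{\Lamred}{k}$ via Lemma~\ref{lemma:Q_structure}(3) with $D=\Lambda$ as a block-diagonal matrix $\diag(\TSec{\Lambda}{m},\iterate{\ctrans{H}}{k}\BSec{\Lambda}{m}\iterate{H}{k})$ plus a Hermitian perturbation split into diagonal ($O(\epsilon_k^2)$) and off-diagonal ($O(\epsilon_k)$) blocks, obtain the eigenvalue bound from Weyl plus the quadratic off-diagonal perturbation estimate (which the paper imports from the literature and you re-derive via a Sylvester block-diagonalization), and obtain the residual bound by using the reduced eigen-equation and the gap $\eta_k$ to control the bottom component $\vvw_2$ of the Ritz vector before expanding $A\vvq_j-\widehat{\lambda}_jB\vvq_j$ through $C$ and $X$. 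The argument is correct and the minor differences (explicit Sylvester derivation, the compact identity $A\vvq_j-\widehat{\lambda}_jB\vvq_j = BX(\Lambda-\widehat{\lambda}_jI)\ctrans{X}B\vvq_j$ in place of the paper's term-by-term expansion) are presentational rather than substantive.
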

\begin{proof}
\newcommand{\QAQ}{\iterate{\ctrans{Q}}{k}A\iterate{Q}{k}}
\newcommand{\epsk}{\epsilon_k}
\newcommand{\etak}{\eta_k}
\newcommand{\lamp}{\lambda'}
\newcommand{\lamhat}{\widehat{\lambda}}
\newcommand{\utop}{\vvu_j^{(t)}}
\newcommand{\ubot}{\vvu_j^{(b)}}
\newcommand{\Gk}{\iterate{G}{k}}
\newcommand{\invLk}{\iterate{\inverse{L}}{k}}
\newcommand{\Vk}{\iterate{V}{k}}
As displayed in Equation~\ref{eqn:QAQ_Lambdahat},
$\iterate{\ctrans{Q}}{k}A\iterate{Q}{k} = \iterate{\Lamred}{k}$ so that the eigenvalues
of $\iterate{\Lamred}{k}$ are those of
$\iterate{\ctrans{Q}}{k}A\iterate{Q}{k}$. Because $A = BX\,\Lambda\,\ctrans{X}B$, 
Lemma~\ref{lemma:Q_structure}
shows that
\begin{eqnarray*}
\QAQ
 & = &
 (\iterate{\ctrans{Q}}{k}BX)\,\Lambda\,(XB\iterate{Q}{k}), \\
 & = &
 \iterate{\ctrans{U}}{k}\,\left(W + \Delta_{\rm off} + \Delta_{\rm diag}\right)\,
 \iterate{U}{k}, \\
{\rm eig}(\QAQ)
 & = &
{\rm eig}\left(W + \Delta_{\rm off} + \Delta_{\rm diag}\right),
\end{eqnarray*}
where
$$
W = \left[\begin{array}{c|c}\TSec{\Lambda}{m} & \\\hline 
                & \iterate{\ctrans{H}}{k}\BSec{\Lambda}{m}\iterate{H}{k} 
          \end{array}\right], \quad
\Delta_{\rm off} = \left[\begin{array}{c|c} & \Delta_{12} \\\hline 
                \ctrans{\Delta}_{12} & 
          \end{array}\right], \quad
\Delta_{\rm diag} = \left[\begin{array}{c|c} \Delta_{11} &  \\\hline 
                        &  \Delta_{22}
          \end{array}\right],
$$
where $\Delta_{\rm off}$ and $\Delta_{\rm diag}$ are small perturbations:
$\Enorm{\Delta_{\rm diag}}\le 4 \Enorm{\Lambda}\epsk^2$ and
$\Enorm{\Delta_{\rm off}}\le 4 \Enorm{\Lambda}\epsk$.
Of the $p$ eigenvalues of $W$, $m$ of them are $\lambda_1,\lambda_2, \ldots,\lambda_m$. We analyze
the eigenvalues of $W + \Delta_{\rm off} + \Delta_{\rm diag}$ by standard 
Hermitian perturbation theory
(see for example~\cite{golub-vanloan-1989,
mathias-1998,
stewart-sun-matrix-perturbation-theory,li-li-2005}).
First, there are $m$ eigenvalues $\lamp_1,\lamp_2,\ldots,\lamp_m$ of
$W + \Delta_{\rm off}$ such that
\begin{equation}
\label{eqn:eigenvalue_bd_1}
|\lambda_j-\lamp_j| \le
\min\{ \Enorm{\Delta_{\rm off}}, \Enorm{\Delta_{\rm off}}^2/(\etak\Enorm{\Lambda}) \} \le
 4 \Enorm{\Lambda}\,\min\{ \epsk, 4 \epsk^2/\etak \}.
\end{equation}
In the case $m=p$, $\Delta_{\rm off} = 0$ and the definition of
$\etak = \infty$ correctly reflects that $|\lambda_j - \lamp_j| = 0$.
Next, apply the standard Weyl perturbation theorem on $(W+\Delta_{\rm off})+\Delta_{\rm diag}$
where $\Delta_{\rm diag}$ is the perturbation term. There are $m$ eigenvalues 
$\lamhat_1,\lamhat_2,\ldots,\lamhat_m$ of $W+\Delta_{\rm off}+\Delta_{\rm diag}$ such that
\begin{equation}
\label{eqn:eigenvalue_bd_2}
|\lamp_j - \lamhat_j| \le \Enorm{\Delta_{\rm diag}} \le 4 \Enorm{\Lambda} \epsk^2.
\end{equation}
Combining Equations~\ref{eqn:eigenvalue_bd_1} and~\ref{eqn:eigenvalue_bd_2} gives
$$
|\lambda_j - \lamhat_j| \le 4 \Enorm{\Lambda} \left(
\epsk^2 + \min\{ \epsk, 4 \epsk^2/\etak \}\right).
$$

\VS
Moving on to examine the residual of the approximate eigenvector $\vvq_j$, note that
$$
\vvq_j 
= [\; (\XTop{m} + \XBot{m}\iterate{G}{k})\inverse{\iterate{L}{k}} \;\;\; 
\XTop{m}\,\iterate{S}{k} + \XBot{m}\,\iterate{H}{k}\iterate{R}{k} \;] \, \vvu_j,
$$
$\vvu_j$ being the $j$-th column of $\iterate{U}{k}$.
\begin{eqnarray*}
\iterate{\ctrans{Q}}{k}A\iterate{Q}{k} - \Lamred = 0
 & \implies & 
 \iterate{\ctrans{U}}{k}\,\left(\,W + \Delta_{\rm off} + \Delta_{\rm diag}\,\right)
 \iterate{U}{k} - \Lamred = 0, \\
 & \implies &
 \left(\,W + \Delta_{\rm off} + \Delta_{\rm diag}\,\right) \vvu_j - \lamhat_j \vvu_j = 0, \\
 & \implies &
 \Enorm{W \vvu_j - \lamhat_j \vvu_j} \le 4 \Enorm{\Lambda} \epsk (1+ \epsk).
\end{eqnarray*}
Partition $\vvu_j$ into its top $m$ and bottom $p-m$ elements: 
$\vvu_j = \left[\begin{array}{c} \utop \\ \ubot \end{array}\right]$. 
$$
\etak \Enorm{\Lambda} \Enorm{\ubot} \le
\Enorm{ \iterate{\ctrans{H}}{k}\BSec{\Lambda}{m}\iterate{H}{k} \ubot - \lamhat_j  \ubot } \le
4 \Enorm{ \Lambda } \epsk (1+\epsk).
$$
Thus $\Enorm{\ubot} \le 4 \epsk (1+\epsk)/\etak$. Furthermore,
$$
\Enorm{ (\TSec{\Lambda}{m} - \lamhat_j I)\utop } \le 4 \Enorm{\Lambda} \epsk(1+\epsk).
$$
Estimating the residual:
\begin{eqnarray*}
A \vvq_j
 & = &
 A \, [\; (\XTop{m}+\XBot{m}\Gk)\invLk \;\;\; \Vk \;] \,\vvu_j, \\
 & = &
 (BX\,\Lambda\,\ctrans{X}B) \, [\; (\XTop{m}+\XBot{m}\Gk)\invLk \;\;\; \Vk \;] \,\vvu_j, \\ 
 & = &
 B \left( (\XTop{m}\TSec{\Lambda}{m} +
           \XBot{m}\BSec{\Lambda}{m}\Gk)\invLk \utop + X\Lambda (\ctrans{X}B\Vk)\ubot \right), \\
\lamhat_j B \vvq_j
 & = &
 B \left( \lamhat_j \, (\XTop{m}+\XBot{m}\Gk)\invLk \utop + \lamhat_j \Vk\ubot \right), \\
A\vvq_j - \lamhat_j\vvq_j
 & = &
B \left(
  \XTop{m}(\TSec{\Lambda}{m}-\lamhat_j I)\invLk \utop +
  \XBot{m}(\BSec{\Lambda}{m}-\lamhat_j I)\Gk \invLk \utop +  \right. \\
 &   & \phantom{kkk} \left. (X\Lambda(\ctrans{X}B\Vk) - \lamhat_j \Vk)\ubot \right), \\
 & = &
\ctrans{C} \left(
  C\XTop{m}(\TSec{\Lambda}{m}-\lamhat_j I)\invLk \utop +
  C\XBot{m}(\BSec{\Lambda}{m}-\lamhat_j I)\Gk \invLk \utop + \right. \\
 &   &
  \phantom{kkk} \left. (CX\Lambda(\ctrans{X}B\Vk) - \lamhat_j C\Vk)\ubot \right).
\end{eqnarray*}
Note that 
$\Enorm{C\XTop{m}}$,
$\Enorm{C\XBot{m}}$,
$\Enorm{CX}$, and
$\Enorm{\ctrans{X}BV}$ are all of unity as
$CX$ is unitary and $V$ is $B$-orthonormal.
Estimating $\Enorm{\BSec{\Lambda}{m}-\lamhat_j I} \le 2 \Enorm{\Lambda}$ and
using bounds of $\Enorm{\invLk}$ and $\ubot$, we have
\begin{eqnarray*}
\Enorm{A\vvq_j - \lamhat_j B \vvq_j}
 & \le &
 \Enorm{C}\Enorm{\Lambda}\left(
 4 \epsk(1+\epsk)(1+\epsk^2) +
 2 \epsk(1+\epsk^2) +
 8 \epsk(1+\epsk)/\etak 
 \right), \\
 & \le &
 12 \Enorm{C} \Enorm{\Lambda} \epsk\,(1 + 1/\etak),
\end{eqnarray*}
using $\epsk \le 1/2$. This completes the proof.
\end{proof}

\VS
Theorem~\ref{thm:eigen_convergence} shows that if the subspace dimension $p$ is large enough,
we would expect some $m$, $m \ge \lambnum$, eigenvalues among the $p$ values of 
$\iterate{\Lamred}{k}$ converge to the actual eigenvalues of $AX = BX\Lambda$. 
Furthermore, $\lambnum$ of these eigenvalues are inside $\calI = \lambintc$. If the spectral
gaps $\eta_k$ are never too small, the convergence rate of eigenvalues are essentially
$|\gamma_{p+1}/\gamma_m|^{2k}$ while the residual vectors norms
$\Enorm{A\vvq_j - \widehat{\lambda}_j B\vvq_j}$ decrease at the rate $|\gamma_{p+1}/\gamma_m|^k$.

\VS
However, all we can conclude about the remaining $p-m$ eigenvalues (when $m < p$) 
is that they are close to the eigenvalues of $\iterate{\ctrans{H}}{k}\BSec{\Lambda}{m}\iterate{H}{k}$,
which can change at each iteration. As $\iterate{H}{k}$ 
has orthonormal columns,
each of these $p-m$ eigenvalues,
$\mu \in {\rm eig}(\iterate{\ctrans{H}}{k}\BSec{\Lambda}{m}\iterate{H}{k})$,
satisfies $\min_{j > m} \lambda_j \le \mu \le \max_{j>m}\lambda_j$. In particular, some or all
of them can fall inside $\calI$. Hence there may be more than $\lambnum$ eigenvalues of 
$\iterate{\Lamred}{k}$ that fall inside $\calI$. Our general experience is that, a posteriori,
exactly $\lambnum$ of the eigenvalues of $\iterate{\Lamred}{k}$ fall inside $\calI$.
Nevertheless, knowing the value of $\lambnum$, a priori, can be exploited to help monitor convergence. 
It turns out that the value $\lambnum$
can be accurately estimated as a by-product of Algorithm FEAST. Theorem~\ref{thm:e_estimate}
shows that the distribution of $\iterate{\Bred}{k}$'s eigenvalues offer a good estimate of $\lambnum$.
More important, this distribution gives us an indication if the choice of $p$ is too small.
Due to the nature of $\quadf(\surM)$, $p < \lambnum$ would in general lead to nonconvergence 
of FEAST.
For example, consider $\iterate{Q}{0} = \TSec{X}{\lambnum} W$, where $W$ is $\lambnum\times p$,
$p < \lambnum$ and $\ctrans{W}W = I_p$. Suppose $\quadf(\surM)$ is the exact spectral projector,
$\quadf(\surM) = \XTop{\lambnum} \ctrans{X}_\lambnum$.
Let $\ctrans{W}AW$ have the spectral decomposition $V D \ctrans{V}$. Algorithm FEAST will simply
get stuck after the first iteration at $\iterate{Q}{k} = X_{\lambnum} W V$ and
$\iterate{\Lamred}{k} = D$. The key reason is that by design $\quadf(\surM)$ maps all the possibly
distinct eigenvalues $\lambda_1,\lambda_2,\ldots,\lambda_\lambnum$ to almost
identically 1. We note that nonconvergence due to $p < \lambnum$ was observed in
Experiment 3.1 of~\cite{kramer-etal-2013}.

\begin{theorem}
\label{thm:e_estimate}
\newcommand{\epsk}{\epsilon_k}
Consider Algorithm FEAST that exhibits subspace convergence as in 
Theorem~\ref{thm:subspace_bound_with_error}. For
any iteration $k$, $\iterate{Q}{k}$ is represented as in Lemma~\ref{lemma:Q_structure} 
$$
\iterate{Q}{k} 
= [\; (\XTop{m} + \XBot{m}\iterate{G}{k})\inverse{\iterate{L}{k}} \;\;\; 
\XTop{m}\,\iterate{S}{k} + \XBot{m}\,\iterate{H}{k}\iterate{R}{k} \;] \, \iterate{U}{k},
$$
where $\Enorm{\iterate{G}{k}} = \epsk$ is small. The eigenvalues of
$\iterate{\Bred}{k+1}$ are close to those of the matrix
$\diag(\Gamma^2_m, \iterate{\ctrans{H}}{k}\Gamma^2_{m'} \iterate{H}{k})$.
The eigenvalues of $Z \bydef \iterate{\ctrans{Q}}{k}B\iterate{Y}{k+1}$ are close
to those of 
$\diag(\TSec{\Gamma}{m}, \iterate{\ctrans{H}}{k}\BSec{\Gamma}{m}\iterate{H}{k})$.
In particular, the number $\lambnum$ of target eigenvalues in $\lambintc$ can
be estimated by the number of $\iterate{\Bred}{k+1}$'s eigenvalues that are no less than $1/4$,
or the number of $Z$'s eigenvalues no less than $1/2$.
\end{theorem}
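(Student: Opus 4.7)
The plan is to reduce both $\iterate{\Bred}{k+1}$ and $Z$ to small perturbations of block-diagonal Hermitian matrices by invoking Lemma~\ref{lemma:Q_structure}, and then to count eigenvalues against the stated thresholds using the spectral structure of $\quadf(\surM)$ from Theorem~\ref{thm:quadf_properties}.

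First, I would use the eigendecomposition $\quadf(\surM) = X\,\Gamma\,\ctrans{X}B$ from Equation~\ref{eqn:quadf_eigen_decomposition}, where $\Gamma = \quadf(\Lambda)$ is real diagonal. Since $\ctrans{X}BX = I$ and $\ctrans{\Gamma}=\Gamma$, a direct calculation gives
$$
\quadf(\surM)^{*}\,B\,\quadf(\surM) \;=\; B\,X\,\Gamma^{2}\,\ctrans{X}\,B, \qquad B\,\quadf(\surM) \;=\; B\,X\,\Gamma\,\ctrans{X}\,B,
$$
so that
$$
\iterate{\Bred}{k+1} \;=\; (\ctrans{X}B\iterate{Q}{k})^{*}\,\Gamma^{2}\,(\ctrans{X}B\iterate{Q}{k}), \qquad Z \;=\; (\ctrans{X}B\iterate{Q}{k})^{*}\,\Gamma\,(\ctrans{X}B\iterate{Q}{k}).
$$
Both are of the form $(\iterate{\ctrans{Q}}{k}BX)\,D\,(\ctrans{X}B\iterate{Q}{k})$ with $D$ real diagonal, precisely the form to which Part~(3) of Lemma~\ref{lemma:Q_structure} applies.

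Second, I would apply that lemma with $\epsilon = \epsk$ to $\iterate{Q}{k}$, taking $D = \Gamma^{2}$ for the first identity and $D = \Gamma$ for the second. This yields
$$
\iterate{\Bred}{k+1} \;=\; \iterate{\ctrans{U}}{k}\Big(\,\diag(\Gamma_m^{2},\,\iterate{\ctrans{H}}{k}\Gamma_{m'}^{2}\iterate{H}{k}) \,+\, \Delta\,\Big)\iterate{U}{k},
$$
$$
Z \;=\; \iterate{\ctrans{U}}{k}\Big(\,\diag(\TSec{\Gamma}{m},\,\iterate{\ctrans{H}}{k}\BSec{\Gamma}{m}\iterate{H}{k}) \,+\, \Delta'\,\Big)\iterate{U}{k},
$$
with $\Enorm{\Delta}\le 4\Enorm{\Gamma^{2}}\epsk$ and $\Enorm{\Delta'}\le 4\Enorm{\Gamma}\epsk$. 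Since $\iterate{U}{k}$ is unitary, Weyl's theorem implies that the eigenvalues of $\iterate{\Bred}{k+1}$ and $Z$ differ from those of the two displayed block-diagonal Hermitian matrices by at most $\Enorm{\Delta}$ and $\Enorm{\Delta'}$, respectively.

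Third, to pin down the count of $\lambnum$, I would analyze each block-diagonal matrix separately. By our ordering convention and Theorem~\ref{thm:quadf_properties}, $\gamma_j \ge 1/2$ for exactly the $\lambnum$ indices with $\lambda_j \in \calI$ and $|\gamma_j| < 1/2$ otherwise. Thus $\Gamma_m^{2}$ has exactly $\lambnum$ diagonal entries at least $1/4$, and $\TSec{\Gamma}{m}$ has exactly $\lambnum$ entries at least $1/2$. Because $\iterate{H}{k}$ has orthonormal columns, the Courant--Fischer theorem places the spectrum of $\iterate{\ctrans{H}}{k}\Gamma_{m'}^{2}\iterate{H}{k}$ inside $[0,\gamma_{m+1}^{2}] \subset [0,1/4)$ and that of $\iterate{\ctrans{H}}{k}\BSec{\Gamma}{m}\iterate{H}{k}$ inside $[-|\gamma_{m+1}|,|\gamma_{m+1}|] \subset (-1/2,1/2)$. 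Hence the unperturbed block-diagonal matrices have exactly $\lambnum$ eigenvalues at or above their respective thresholds.

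The main obstacle is quantitative: the counting is robust only when the perturbations $\Delta$, $\Delta'$ (of size $O(\epsk)$) are smaller than the distance from every true unperturbed eigenvalue to the chosen threshold. This is where Theorem~\ref{thm:subspace_bound_with_error} is used: after sufficiently many iterations, $\epsk$ is small, and together with the quantitative decay of $|\quadf(\lambda)|$ off $\calI$ documented in Table~\ref{table:rho_tail}, the gaps $\gamma_\lambnum^{2} - \gamma_{\lambnum+1}^{2}$ and $\gamma_\lambnum - |\gamma_{\lambnum+1}|$ strictly dominate $\Enorm{\Delta}$ and $\Enorm{\Delta'}$, so Weyl's theorem delivers the exact count of $\lambnum$ eigenvalues above each threshold.
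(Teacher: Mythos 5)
Your proposal is correct and follows essentially the same route as the paper's proof: reduce $\iterate{\Bred}{k+1}$ and $Z$ to $(\iterate{\ctrans{Q}}{k}BX)\,D\,(\ctrans{X}B\iterate{Q}{k})$ with $D=\Gamma^2$ and $D=\Gamma$, apply part (3) of Lemma~\ref{lemma:Q_structure}, and use the orthonormality of $\iterate{H}{k}$'s columns to confine the second block's spectrum below the thresholds $1/4$ and $1/2$. Your added explicit invocation of Weyl's theorem and the gap condition merely makes quantitative what the paper states informally as ``for small $\Enorm{\iterate{G}{k}}$.''
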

\begin{proof}
Given 
$$
\iterate{Q}{k} 
= [\; (\XTop{m} + \XBot{m}\iterate{G}{k})\inverse{\iterate{L}{k}} \;\;\; 
\XTop{m}\,\iterate{S}{k} + \XBot{m}\,\iterate{H}{k}\iterate{R}{k} \;] \, \iterate{U}{k}.
$$
\begin{eqnarray*}
\iterate{\Bred}{k+1}
 & = &
 \iterate{\ctrans{Y}}{k+1}\,B\,\iterate{Y}{k+1}, \\
 & = &
 \iterate{\ctrans{Q}}{k} \, \ctrans{\quadf}(\surM) \, B \, \quadf(\surM)\,\iterate{Q}{k}, \\
 & = &
 (\iterate{\ctrans{Q}}{k}BX)\,\Gamma\ctrans{X}BX\Gamma\,(\ctrans{X}B\iterate{Q}{k}), 
 \qquad \hbox{because $\quadf(\surM) = X \Gamma \ctrans{X}B$,} \\
 & = &
 (\iterate{\ctrans{Q}}{k}BX)\,\Gamma^2\,(\ctrans{X}B\iterate{Q}{k}),   \\
 & = &
 \iterate{\ctrans{U}}{k}\,\left(
 \diag(\Gamma^2_m,\, \iterate{\ctrans{H}}{k}\Gamma^2_{m'}\iterate{H}{k}) + \Delta \right)\,
 \iterate{U}{k}, \qquad
 \hbox{by Lemma~\ref{lemma:Q_structure}.}
\end{eqnarray*}
Clearly, the eigenvalues of $\iterate{\Bred}{k+1}$ are close to those of
$\diag(\Gamma^2_m,\, \iterate{\ctrans{H}}{k}\Gamma^2_{m'}\iterate{H}{k})$. Similarly,
\begin{eqnarray*}
Z
 & = &
 \iterate{\ctrans{Q}}{k}\,B\,\iterate{Y}{k+1}, \\ 
 & = &
 (\iterate{\ctrans{Q}}{k}BX)\,\Gamma\,(\ctrans{X}B\iterate{Q}{k}),  \\
 & = &
 \iterate{\ctrans{U}}{k}\,\left(
 \diag(\TSec{\Gamma}{m},\, \iterate{\ctrans{H}}{k}\BSec{\Gamma}{m}\iterate{H}{k}) + \Delta \right)\,
 \iterate{U}{k}, \qquad
 \hbox{by Lemma~\ref{lemma:Q_structure}.}
\end{eqnarray*}
Note however that 
$\gamma_1 \ge \cdots \gamma_{\lambnum} \ge 1/2 > |\gamma_{\lambnum+1}| \ge \cdots \ge |\gamma_n|$
and
$$
\min_{j > m'} \gamma^2_j \, \le \,
{\rm eig}(\iterate{\ctrans{H}}{k}\Gamma^2_{m'}\iterate{H}{k}) \, \le \,
\max_{j > m'} \gamma^2_j 
$$
because $\iterate{H}{k}$ has orthonormal columns. For small $\Enorm{\iterate{G}{k}}$,
the number of $\iterate{\Bred}{k+1}$'s eigenvalues no smaller than $1/4$ is an accurate estimate
of $\lambnum$. Similar arguments shows that the number $\lambnum$ can be estimated by counting
the eigenvalues of $Z$ that are no less than $1/2$.
\end{proof}


\VS
Let us elaborate on a number of details related to FEAST 
now that all the main theoretical 
properties have been presented.
\begin{enumerate}
\item The algorithm requires a choice of the subspace dimension, $p$. If the user has an educated
guess of $\lambnum$, the actual number of eigenvalues in the search interval, $p$ can be set to be 
about $1.5$ times of that. Otherwise, a somewhat arbitrary choice is set. The following discussions
are germane.
\begin{itemize}
  \item
  Whenever $p \ge \lambnum$, convergence is possible and the
  rate is generally determined by $|\gamma_{p+1}/\gamma_\lambnum|$. 
  Tables~\ref{table:subspace_bound_simple} through~\ref{table:p_eq_lambnum}
  of Section~\ref{sec:numerical_experiments} are illustrations. The examples there exhibit
  rates consistent with $|\gamma_{p+1}/\gamma_\lambnum|$. Theoretically, $p = \lambnum$
  does not lead to nonconvergence. Nevertheless, in practice 
  $|\gamma_{p+1}/\gamma_\lambnum| = |\gamma_{\lambnum+1}/\gamma_\lambnum|$ will be close to
  1, rendering convergence slow. This slow convergence was observed in Experiment 3.1 
  of~\cite{kramer-etal-2013}.
  \item
  In general, a choice of $p \ge \lambnum$ where $|\gamma_{p+1}/\gamma_\lambnum| \ll 1$ is desirable
  as it leads to fast convergence. As shown in Theorem~\ref{thm:subspace_bound}, 
  as long as $|\gamma_p| > 0$ and 
  $\ctrans{X}B\iterate{Q}{0}$ has full column rank, $\quadf(\surM)\iterate{Q}{k}$ is of
  full rank $p$ for all iterations $k$. Note that $\quadf(\surM)$ is not the exact spectral
  projector and is theoretically almost always invertible. This is because $\quadf(\mu)$ is 
  a rational function and only has a small numbers of zeros on the real line 
  (see Section~\ref{sec:quadf_properties}). So in all likelihood
  $\quadf(\Lambda)$ is invertible. However, $\quadf(\mu)$ decays rapidly, so from a numerical
  point of view, $\quadf(\surM)$ could be numerically rank deficient if $p$ is chosen too 
  large\footnote{What makes $p$ too large is obviously dependent of the actual distribution
  of the eigenvalues of the problem in question.},
  for example, $p \ge 4\lambnum$. Consequently, 
  $\iterate{Y}{1} = \quadf(\surM)\iterate{Q}{0}$ will be rank deficient, leading to a
  semi-definite $\iterate{\Bred}{1}$. The conservative precautionary approach is to perform
  a SVD or a rank-revealing $QR$ factorization~\cite{gu-eisenstat-1996,hong-pan-1992}
  on $\iterate{Y}{1}$ to possibly reduce the value of $p$ before proceeding further. 
  However, we found that the greedy approach of letting LAPACK's Cholesky factorization 
  on $\iterate{\Bred}{1}$ proceed naturally works very well in practice. 
  If the factorization fails at the $K$-th
  column, we reset $p \gets K-1$ and use these first $p$ columns of $\iterate{Y}{1}$.
  That this strategy is effective has to do with the randomness of $\iterate{Q}{0}$. While
  $\quadf(\surM)$ is numerically rank deficient (of low rank), each $\quadf(\surM)\vvq_j$
  is a random mixture of all the columns of $\quadf(\surM)$. $QR$ without column pivoting
  on such a $\quadf(\surM)\iterate{Q}{0}$ is an effective ``greedy'' rank-revealing algorithm. 
  The review article~\cite{halko-martinsson-tropp-2011} and references thereof contain 
  much information about recent works on randomized algorithms.
  \item 
  If $p < \lambnum$, then as discussed previously, FEAST in general will fail to converge.
  If $p \ge \lambnum$ but $|\gamma_{p+1}/\gamma_{\lambnum}|$ close to unity, convergence
  will be slow. We exploit Theorem~\ref{thm:e_estimate} to protect against both scenarios. In
  practice, we compute the eigenvalues of $\iterate{\Bred}{2}$. If the minimum eigenvalue
  is bigger than ${\rm threshold}/4$ for some ``threshold'' less than 1, for example, $1/10$,
  we warn against $p$ being set too small. For $p$ not considered too small,
  our experience shows that the count of $\iterate{\Bred}{2}$'s eigenvalues not smaller than 
  $1/4$ to match $\lambnum$, the number of eigenvalues in the search interval.
\end{itemize}

\item 
\newcommand{\intsup}[2]{#1^{(#2)}}
The search interval $\calI = \lambintc$.
\begin{itemize}
\item
  This is an obvious feature for parallelism. 
  One would be able to locate the eigenvalues and eigenvectors within different search
  intervals independently and simultaneously. The convergence theory established here
  shows that as long as $|\gamma_{p+1}/\gamma_\lambnum|$ is suitably small, eigenvalues
  within each search interval can be obtained, each with an eigenvector that results in
  small residual. (See Theorem~\ref{thm:eigen_convergence} for detailed conditions.) Computed
  eigenvectors within one search interval are mutually $B$-orthogonal (assuming an accurate
  eigensolver is used for the reduced problem). 
\item
  A natural application of the previous point is
  to partition one search interval $\calI = \lambintc$ 
  into a sequence of connecting intervals 
  $\intsup{\calI}{k} = [\intsup{\lambda}{k-1},\intsup{\lambda}{k}]$, $k=1,2,\ldots,K$,
  where $\lambdamin = \intsup{\lambda}{0} < \intsup{\lambda}{1} 
  < \cdots < \intsup{\lambda}{K} = \lambdamax$. 
  Each $\intsup{\calI}{k}$ is tackled independently.
  Convergence theory applies on each sub-interval. However, in the case when there is a cluster
  of eigenvalues around a break point, say $\intsup{\lambda}{k}$, there will be a natural
  loss of $B$-orthogonality between the computed eigenvectors associated to the clusters
  on the left interval $\intsup{\calI}{k} = [\intsup{\lambda}{k-1},\intsup{\lambda}{k}]$
  and to those on the right $\intsup{\calI}{k+1} = [\intsup{\lambda}{k},\intsup{\lambda}{k+1}]$.
  This phenomenon is due to sensitivity of eigenvectors associated with a cluster of eigenvalues
  (see~\cite{demmel-numerical-linear-algebra} Section 5.2 for example). But Algorithm FEAST
  offers a natural strategy to handle this situation. If there is indeed a cluster of eigenvalues
  around one of the points of a search interval and if FEAST is indeed converging at a reasonable
  rate, then $p$ must have already been chosen large enough to include the number of eigenvalues in the
  cluster (including those outside of the search interval). Thus in the end, the computations
  for $\intsup{\calI}{k}$ and $\intsup{\calI}{k+1}$ will each have obtained all the clustered
  eigenvalues and a complete set of $B$-orthonormal eigenvectors. It suffices to adopt one of these
  two set of eigenvectors.
  Section~\ref{experiment:splitting_clusters} illustrates this idea.
\end{itemize}

\item
Convergence criteria.
\begin{itemize}
\item
  The original implementation, FEAST Version 1.0, only monitors convergence of eigenvalues and does
  so through the surrogate of ``trace,'' namely the sum of all the computed eigenvalues 
  $\iterate{\Lamred}{k}$ that fall inside $\calI$. Our analysis exposes two shortcomings of this strategy.
  First, eigenvalues in general converge faster than the residual norm. Thus the algorithm may 
  terminate before the latter is driven down as small as level as is achievable. Second, it is
  possible that some of $\iterate{\Lamred}{k}$'s eigenvalues are ``spurious.'' 
  These spurious eigenvalues generally do not converge, and monitoring
  them will only defeat FEAST's convergence test, resulting in a false negative. 
\item
  FEAST Version 2.1~\cite{feast-2.1}
  corrected both problems. First, it offers the user an option to set convergence
  thresholds for either eigenvalues or residuals. Second, with the estimator of $\lambnum$, the actual
  number of eigenvalues inside the search interval $\calI$, the absence of spurious eigenvalues
  is easily recognized. When the presence of spurious eigenvalues is detected, they are identified 
  with the help of residual norms. The trace consists of the sum of computed eigenvalues without the
  spurious ones. Convergence of eigenvalues is monitored by the surrogate 
  $|\iterate{{\rm trace}}{k-1} - \iterate{{\rm trace}}{k}|/max\{|\lambdamin|,|\lambdamax|\}$.
\end{itemize}
\end{enumerate}
  
\VS
FEAST Version 2.1~\cite{feast-2.1} incorporated these improvements and is outlined as
{\it Algorithm FEAST with Estimate}.
\begin{algorithm}
{\it Algorithm FEAST with Estimate} 
\begin{algorithmic}[1]
\State Specify $\calI = \lambintc$ and a Gauss-Legendre quadrature choice of $q$.
\State Pick $p$ and $p$ random $n$-vectors $\iterate{Q}{0} = [\vvq_1, \vvq_2, \ldots, \vvq_p]$.
       Set $k \gets 1$.
\Repeat
\State Approximate subspace projection (see Equation~\ref{eqn:quadrature-as-linear-systems}): 
       $\iterate{Y}{k} \gets \quadf(\surM) \cdot \iterate{Q}{k-1}$.
\State Form reduced system: $\iterate{\Ared}{k} \gets \iterate{\ctrans{Y}}{k} A \iterate{Y}{k}$,
       $\iterate{\Bred}{k} \gets \iterate{\ctrans{Y}}{k} B \iterate{Y}{k}$. 
\If    {$k = 2$}
\State Compute $\iterate{\Bred}{k}$'s $p$ eigenvalues.
\State If minimum eigenvalue $\ge {\rm thres}/4$, report that $p$ is probably too small.
\State (Note: ${\rm thres}\le 1$. FEAST Version 2.1 uses ${\rm thres} = 1$.)
\State Otherwise, set $\widehat{e}$ to be number of eigenvalues $\ge 1/4$. ($\widehat{e}$ estimates
       $\lambnum$.)
\EndIf 
\State Solve $p$-dimension eigenproblem: 
       $\iterate{\Ared}{k} \iterate{\Xred}{k} = \iterate{\Bred}{k} \iterate{\Xred}{k} \iterate{\Lamred}{k}$ 
       for $\iterate{\Lamred}{k}$, $\iterate{\Xred}{k}$.
\If    {$k=1$ and the above fails due to non-definite $\iterate{\Bred}{1}$}
\State Reduce $p$ to the last column of $\iterate{\Bred}{1}$ before Cholesky fails.
\EndIf
\State Set $\iterate{Q}{k} \gets \iterate{Y}{k} \iterate{\Xred}{k}$, in particular 
       $\iterate{\ctrans{Q}}{k} \, B \, \iterate{Q}{k} = I_p$.
\State $k \gets k + 1$.
\Until {Stopping criteria based on the $\widehat{e}$ smallest residuals or trace of
        $\widehat{e}$ computed eigenvalues}
\end{algorithmic}
\end{algorithm}


\section{Numerical Experiments}

\label{sec:numerical_experiments}

The original FEAST paper~\cite{polizzi-2009} demonstrated that the algorithm converges
in practice on a number of large sparse matrices that arise from applications. 
Direct as well as iterative solvers were used in the examples there. The purpose here is to
scrutinize the various numerical properties as well as subtleties discussed in the previous
sections. To this end, we therefore utilize primarily synthetic, controlled, examples. 
Given the
problem dimension $n$ and a search interval $\lambintc$, we generate $\Lambda$, the diagonal matrix
containing the eigenvalues, somewhat randomly, except for special placements of some of the eigenvalues
near the boundaries of $\lambintc$. Random unitary matrices are the basic ingredient of our test
matrices. With a specified condition number $\kappa$, random matrix $C$ is generated as $U \Sigma \ctrans{V}$
where $U$ and $V$ are random unitary matrices and $\Sigma$ are random singular values so as to make
the condition number of $C$ equal $\kappa$. The matrix $B$ is constructed as $B = \ctrans{C} C$. The
eigenvectors $X$ are constructed by solving $C X = W$ where $W$ is a random unitary matrix. Finally, the
matrix $A$ is constructed as $A = (BX) \, \Lambda \, \ctrans{(BX)}$. This way,
$$
A X = B X \Lambda,
$$
and $(X,\Lambda)$ is the solution to the generalized eigenproblem defined by $A$ and $B$.

\subsection{Approximate spectral projector via quadrature}
\label{experiment:asp}

A crucial property of the quadrature-based approximate spectral projector $\quadf(\surM)$ is that
it preserves $\surM$'s eigenvectors, $\surM = \inverse{B}A$,
and changes only its eigenvalues from $\Lambda$ to
$\Gamma = \quadf(\Lambda)$ (see
Equations~\ref{eqn:quadf_eigen_decomposition}~and~\ref{eqn:quadf_preserves_eigenvectors}):

$$
\quadf(\surM) \bydef \sum_{k=1}^K \omega_k \inverse{(\gamma_k B - A)} \, \cdot B 
                          =  X\,\quadf(\Lambda) \ctrans{X}B = X \, \quadf(\Lambda) \, \inverse{X}.
$$

We generate matrices $A,B,\Lambda,X$ (as outlined previously) of dimension $n=300$ with
the elements of $\Lambda$ to be uniformly distributed in $[-30,30]$. The $C$ matrices used in 
generating $B = \ctrans{C} C$ have condition number 100. We used Gauss-Legendre quadrature rule with 6, 8
and 10 quadrature points on $[-1,1]$. For each test system and quadrature rule, we compute
$$
\epsilon \bydef \max_{1\le j \le n}  \frac{\Enorm{\vve_j}}{\Enorm{\surM}}, 
\quad \vve_j = \quadf(\surM) \vvx_j  - \quadf(\lambda_j) \vvx_j. 
$$
For each quadrature rule, 200 test cases are generated and Table~\ref{table:asp_check} tabulates the
maximum, mean, and standard deviation of these 200 $\epsilon$s.

\begin{table}[h]
\begin{center}
{\scriptsize
\begin{tabular}{c | l l l }
Statistics of  &  
\multicolumn{3}{c}{Quadrature Points of Gauss-Legendre} \\
$\{\|\quadf(\surM)\vvx_j - \quadf(\lambda_j)\vvx_j\|/\|\surM\|\}$ &
\multicolumn{1}{c}{6} &
\multicolumn{1}{c}{8} &
\multicolumn{1}{c}{10} \\ 
   &                   \\ \hline
   &                   \\
Maximum  &   
$5.5\times 10^{-15}$ &
$9.0\times 10^{-15}$ &
$1.0\times 10^{-14}$ \\
Mean  &   
$2.1\times 10^{-16}$ &
$2.4\times 10^{-16}$ &
$2.9\times 10^{-16}$ \\
Standard Deviation   &   
$5.5\times 10^{-16}$ &
$8.0\times 10^{-16}$ &
$9.8\times 10^{-16}$ \\
  &
\end{tabular}
}
\caption{{\it Key Property of Quadrature-Based Approximate Spectral Projector.}\ 
For each eigenpair
$(\lambda_j,\vvx_j)$ of $\surM$, we check if indeed $\quadf(\surM)\vvx_j 
\approx \quadf(\lambda_j) \vvx_j$.}
\label{table:asp_check}
\end{center}
\end{table}

\subsection{Convergence of subspace iteration using approximate spectral projector}
\label{experiment:convergence_subspace}

To illustrate Theorem~\ref{thm:subspace_bound}, we generate a complex generalized problem of dimension
$n=500$. We use Gauss-Legendre quadrature with 8 points on $[-1,1]$. $\lambintc$ is set to
$[15,17]$. The $n$ eigenvalues are generated as follows. 
We pick four eigenvalues in $[15,17]$ by picking three randomly with uniform distribution
in the region $[15.2, 16.8]$. The fourth is set to be 17.
This guarantees that $|\quadf(\lambda_j)| \approx 1$ for $j=1,2,3$, and 
$|\quadf(\lambda_4)| = 1/2$. These 4 eigenvalues are the only ones in $[15, 17]$ and
hence $\lambnum = 4$. Next,
five eigenvalues are set to be in the interval $(17, 18]$ such
that the values of $|\quadf(\lambda_j)|$ are $2^{-\ell}$ for $\ell = 3,5,7,9,11$. The
remaining 491 eigenvalues are chosen randomly with uniform distribution on the set 
$[-40,14] \cup [18,60]$. The iteration of {\it Algorithm Subspace Iteration}\ is carried out with
$p=8$. With this choice of $p$, $|\gamma_{p+1}/\gamma_j|$ is $2^{-11}$ for $j=1,2,3$, 
and $2^{-10}$, $2^{-8}$, up to $2^{-2}$ for the next 5 eigenvalues. 
Since the problem is generated, the eigenvectors $\vvx_j$ are known, and the projectors 
$\iterate{P}{k} = \iterate{Q}{k} \iterate{\ctrans{Q}}{k} B$ 
are easy to compute. We examine the quantities
$\Bnorm{(I-\iterate{P}{k})\vvx_j}$ for each of 
$j=1,2,\ldots,8$ for 5 iterations $k=1,2,\ldots,5$. Indeed these
norms decrease in a way consistent with what the theorem predicts, except when the ultimate threshold
of machine precision is reached. Table~\ref{table:subspace_bound_simple} tabulates the result.

\begin{table}[h]
\begin{center}
{\scriptsize
\begin{tabular}{c | c | r r r r r}
      &   & \multicolumn{5}{c}{$\log_2 \Bnorm{(I-P_{(k)})\vvx_j}$ at Iterations $k$}\\
 $j$  & $\log_2\left|\frac{\gamma_{9}}{\gamma_j}\right|$  
               &  \multicolumn{1}{c}{$k=1$} 
               &  \multicolumn{1}{c}{$k=2$}  
               &  \multicolumn{1}{c}{$k=3$}  
               &  \multicolumn{1}{c}{$k=4$}  
               &  \multicolumn{1}{c}{$k=5$}  \\  &  \\ \hline
   & \\
  1   &  -11   & -12.9  & -23.8 & -34.7 & -43.3 & -43.2  \\

  2   &  -11   & -12.0  & -22.8 & -33.8 & -43.3 & -43.3  \\

  3   &  -11   & -11.6  & -22.4 & -33.4 & -43.4 & -43.4  \\

  4   &  -10   & -12.7  & -22.6 & -32.5 & -41.2 & -41.1  \\

  5   &  -8    & -7.3   & -15.1 & -23.1 & -31.1 & -38.5  \\

  6   &  -6    & -5.0   & -10.8 & -16.8 & -22.8 & -28.8  \\

  7   &  -4    & -3.2   & -7.0  & -11.0 & -15.0 & -19.8  \\

  8   &  -2    & -1.1   & -3.0  & -5.0  & -6.9  & -8.9 
\end{tabular}
}
\caption{{\it Subspace Convergence, Complex GHEP.}\ 
$\Bnorm{(I-P_{(k)})\vvx_j}$ measures the distance from $\vvx_j$ to the subspace at
the $k$-th iteration. The $j$-th row of the table shows that this distance converges
to zero at the rate $|\gamma_p/\gamma_j|^k$. This test problem is designed with $p=8$ 
and $\gamma_{p+1}=2^{-11}$. There are $4$ eigenvalues
in $\lambintc=[15,17]$.
Note that the convergence rate of all the 4 targets are quite uniform, 
a signature of the accelerator based on an approximate spectral projector.}
\label{table:subspace_bound_simple}
\end{center}
\end{table}

To illustrate Theorem~\ref{thm:subspace_bound_with_error}, we repeat the same experimental setting 
except that we added artificial errors to the linear solvers. To every solution  $z$
of equation of the form of Equation~\ref{eqn:quadrature-as-linear-systems}:
$$
\omega_k(\phi_k B - A) \vvz = B \vvq,
$$
we modify $\vvz$ by a random error of $2^{15}u$, $u$ being the machine precision,
$$
\vvz \gets \vvz + 2^{15}u \, \Enorm{\vvz} \, \Delta,
$$
where each element of the $n$-vector $\Delta$ is uniformly random in $[-1/2,1/2]$.
According to the bound of Theorem~\ref{thm:subspace_bound_with_error}, which we restate here 
(see that section for details) 
\newcommand{\bbm}{\left|\frac{\gamma_{p+1}}{\gamma_m}\right|}
\newcommand{\bbj}{\left|\frac{\gamma_{p+1}}{\gamma_j}\right|}
$$
\Bnorm{\vvs_j - \vvx_j} \le
 \alpha \,\bbj^k  + \alpha\,\delta\,\bbm^k\;\sum_{\ell=0}^{k-1} \onepd^\ell +
\frac{\delta\onepd}{2\tau},
$$
where $\delta \approx 2^{15} u$ in our case here.
We expect the overall convergence rate not to be affected. The ultimate accuracy limit is degraded
commensurate with the artificial errors injected here. The parameter $m$ is flexible. Thus
for each eigenvalues $\lambda_j$, we can apply the bound with $m=j$. The bound suggests that the 
actual convergence limit is affected by the last term with the factor $1/|\gamma_m|$. 
Table~\ref{table:subspace_bound_with_noise} is consistent with these predictions.
We note that the data in Experiment 3.3 of~\cite{kramer-etal-2013} is consistent with
Theorems~\ref{thm:subspace_bound_with_error} and~\ref{thm:eigen_convergence}.

\begin{table}[!h]
\begin{center}
{\scriptsize
\begin{tabular}{c | c | r r r r r r r r}
      &   & \multicolumn{8}{c}{$\log_2 \Bnorm{(I-P_{(k)})\vvx_j}$ at Iteration $k$}   \\
 $j$  & $\log_2\left|\frac{\gamma_{9}}{\gamma_j}\right|$  
               &  \multicolumn{1}{c}{$k=2$}  
               &  \multicolumn{1}{c}{$k=3$}  
               &  \multicolumn{1}{c}{$k=4$}  
               &  \multicolumn{1}{c}{$k=5$}  
               &  \multicolumn{1}{c}{$k=6$}  
               &  \multicolumn{1}{c}{$k=7$}  
               &  \multicolumn{1}{c}{$k=8$}  
               &  \multicolumn{1}{c}{$k=9$}  
\\  &  \\ \hline
   & \\
1 &  -11 & -21.35  & -32.34  & -34.39  & -34.50  & -34.40  & -34.46  & -34.36  & -34.34 \\
2 &  -11 & -22.97  & -33.66  & -34.34  & -34.38  & -34.33  & -34.37  & -34.33  & -34.38 \\
3 &  -11 & -23.04  & -33.68  & -34.37  & -34.35  & -34.38  & -34.42  & -34.44  & -34.29 \\
4 &  -10 & -19.51  & -29.51  & -33.40  & -33.35  & -33.39  & -33.44  & -33.39  & -33.40 \\
5 &  -8  & -16.22  & -24.22  & -31.17  & -31.38  & -31.31  & -31.39  & -31.36  & -31.32 \\
6 &  -6  & -12.20  & -18.21  & -24.21  & -29.16  & -29.31  & -29.43  & -29.45  & -29.34 \\
7 &  -4  & -7.48   & -11.48  & -15.49  & -19.49  & -23.49  & -26.96  & -27.36  & -27.36 \\
8 &  -2  & -5.02   & -7.02   & -9.01   & -11.01  & -13.01  & -15.00  & -17.00  & -19.00 
\end{tabular}
}
\caption{{\it Subspace Convergence with Error in Linear System Solutions.}\ 
Despite errors injected into the solutions of linear systems, convergence rate of
$\Bnorm{(I-P_{(k)})\vvx_j}$, which measures the distance between $\vvx_j$ and the
$k$-th subspace, remains unaffected at $|\gamma_p/\gamma_j|^k$.
The ultimate accuracy achieved is consistent with the 
error bound of Theorem~\ref{thm:subspace_bound_with_error}.
By Iteration 8, the generated subspaces have captured the best they ever can the 
eigenvectors 1 to 7. The ultimate achievable accuracy degrades by a factor of 2 
from eigenvectors 3 to 7, consistent with the factor of
$1/|\gamma_j|$, $j=3,\ldots,7$.}
\label{table:subspace_bound_with_noise}
\end{center}
\end{table}

\begin{table}[!h]
\begin{center}
{\scriptsize
\begin{tabular}{c | c || r r r  | r r r r r}
      \multicolumn{2}{c}{}     & \multicolumn{8}{c}{Convergence of Eigenvalues and Residuals}    \\
      \multicolumn{2}{c||}{}   & 
   \multicolumn{3}{c|}{$\log_2 \left(|\lambda_j-\tilde{\lambda}_j| / \Enorm{\surM}\right)$} &
   \multicolumn{5}{c}{$\log_2 \left(\Enorm{A\tilde{\vvx}_j-\tilde{\lambda}_jB\tilde{\vvx}_j} / 
                                   \Enorm{\surM} \right)$} 
   \\
      &   & \multicolumn{3}{c|}{at Iteration $k$} & \multicolumn{5}{c}{at Iteration $k$} \\
 $j$  & $\log_2\left|\frac{\gamma_{9}}{\gamma_j}\right|$  
               &  \multicolumn{1}{c}{$k=1$} 
               &  \multicolumn{1}{c}{$k=2$}  
               &  \multicolumn{1}{c|}{$k=3$}  
               &  \multicolumn{1}{c}{$k=2$}  
               &  \multicolumn{1}{c}{$k=3$}  
               &  \multicolumn{1}{c}{$k=4$}  
               &  \multicolumn{1}{c}{$k=5$}  
               &  \multicolumn{1}{c}{$k=6$}   \\
       &  & &  & & & & & & \\ \hline
       &  & &  & & & & & & \\
1  &    -9   &  -33.74  &  -51.15  &  -64.04  &  -28.57  &  -37.56  &  -46.57  &  -52.93  &  -52.95 \\
2  &    -9   &  -32.07  &  -49.50  &  -62.32  &  -27.83  &  -36.82  &  -45.83  &  -52.95  &  -52.86 \\
3  &    -9   &  -34.40  &  -51.88  &  -61.90  &  -29.23  &  -38.22  &  -47.22  &  -53.15  &  -53.10 \\
4  &    -9   &  -36.59  &  -54.23  &  -62.11  &  -30.46  &  -39.46  &  -48.46  &  -53.14  &  -53.14 \\
5  &    -9   &  -34.79  &  -52.18  &  -61.23  &  -29.58  &  -38.57  &  -47.58  &  -52.88  &  -52.91 \\ \hline
\multicolumn{10}{c}{above are $\lambnum$ target eigenvalues; below are ``collaterals''} \\ \hline
6  &    -6   &  -30.73  &  -40.64  &  -52.61  &  -24.92  &  -30.91  &  -36.91  &  -42.91  &  -48.54 \\
7  &    -4   &  -24.27  &  -30.73  &  -38.74  &  -20.03  &  -24.04  &  -28.05  &  -32.06  &  -36.07 \\
8  &    -2   &  -24.55  &  -28.95  &  -32.99  &  -19.96  &  -22.08  &  -24.08  &  -26.09  &  -28.10 \\
\end{tabular}
}
\caption{{\it Convergence of Eigenvalues and Residual Vectors.}\ 
This table represents a typical scenario. Subspace dimension $p$ is bigger than $\lambnum$
but the ``extra'' dimensions also capture additional invariant subspaces, albeit slower. Note
the eigenvalues converge linearly at the rate of $(\gamma_9/\gamma_j)^2$, while residuals
do so at that of $|\gamma_9/\gamma_j|$. 
}
\label{table:eigenpair_convergence_all}
\end{center}
\end{table}

\subsection{Eigenvalue and residual norm convergence}
\label{experiment:convergence_eigenpairs}

We illustrate important aspects of Algorithm FEAST as stated in Theorem~\ref{thm:eigen_convergence}. 
The first example is complex GHEP, dimension 500, with $\lambintc = [15,17]$. 
We generate $\lambnum=5$ eigenvalues well inside this interval.
Eigenvalues outside of $[15,17]$ are generated randomly except for a few specially placed so that
$\gamma = 2^{-3,-5,-7,-9}$. Had $p$ be set to $5=\lambnum$, the convergence rate would be somewhat slow. 
With $p$ set to 8, convergence rate for the target eigenpairs will be linear with a factor of $2^{-9}$. The
implication is that the three ``collaterals'' pair will also converge, except at a slower rate. This example
reflects a typical scenario according to our experience with actual applications. There are often
eigenvalues outside but quite close to the boundaries of $\lambintc$. As a result, the 
successful $p$ will be
strictly bigger than $\lambnum$ and that the iterations will also obtain extra eigenpairs that can be
called ``collaterals.'' Table~\ref{table:eigenpair_convergence_all} shows the numerical details. The
ratios are $|\gamma_{p+1}/\gamma_j| = 2^{-9}$ for the target eigenpairs. 
Note that eigenvalues accuracies improve
by $2^{-18}$ per iteration as suggested by Theorem~\ref{thm:eigen_convergence}. This is typical, especially when 
the collaterals converge. In this event, unless the gap between the target and collateral eigenvalues are
small, Theorem~\ref{thm:eigen_convergence} predicts linear convergence of eigenvalues with the factor
$(\gamma_{p+1}/\gamma_j)^2$.

\VS
The next example underlines the fact that $p \ge \lambnum$ suffices for convergence, and in particular
for the case $p=\lambnum$. We generated two GHEP each of dimension 500 and $\lambintc = [15,17]$. We place
5 eigenvalues in the interior, and 490 eigenvalues well separated from $\lambintc$. In the first
test case, we place a cluster of 5 eigenvalues around the point $\mu$ where $\quadf(\mu) = 2^{-3}$,
and in the second case, around $\mu$ such that $\quadf(\mu) = 2^{-7}$. We set $p = 5$ for both problems.
Table~\ref{table:p_eq_lambnum} shows convergence for both cases at rates that correspond to the two
different gaps. Along the same line, a setting of $p = \lambnum$ will result in slow convergence in
practice as $|\gamma_{p+1}/\gamma_\lambnum| = |\gamma_{\lambnum+1}/\gamma_\lambnum|$ and will likely
be close to unity. This observation is consistent with the slow convergence observed for $p = \lambnum$
in Figure 2 of~\cite{kramer-etal-2013}.

\begin{table}[!h]
\begin{center}
{\scriptsize
\begin{tabular}{c | c || r r r r  | r r r r r}
      \multicolumn{2}{c}{}     & \multicolumn{9}{c}{Convergence of Eigenvalues and Residuals}    \\
      \multicolumn{2}{c||}{}   & 
   \multicolumn{4}{c|}{$\log_2 \left(|\lambda_j-\tilde{\lambda}_j|/\Enorm{\surM}\right)$} &
   \multicolumn{5}{c}{$\log_2 \left(\Enorm{A\tilde{\vvx}_j-\tilde{\lambda}_jB\tilde{\vvx}_j}/
                                   \Enorm{\surM}\right)$} 
   \\
      &   & \multicolumn{4}{c|}{at Iteration $k$} & \multicolumn{5}{c}{at Iteration $k$} \\
 $j$  & $\log_2\left|\frac{\gamma_{6}}{\gamma_j}\right|$  
               &  \multicolumn{1}{c}{$k=1$} 
               &  \multicolumn{1}{c}{$k=2$}  
               &  \multicolumn{1}{c}{$k=3$}  
               &  \multicolumn{1}{c|}{$k=4$}  
               &  \multicolumn{1}{c}{$k=3$}  
               &  \multicolumn{1}{c}{$k=4$}  
               &  \multicolumn{1}{c}{$k=5$}  
               &  \multicolumn{1}{c}{$k=6$}  
               &  \multicolumn{1}{c}{$k=7$}   \\
    &  &  & &  & & & & & & \\ \hline
    &  &  & &  & & & & & & \\
 1 & -3  & -14.18  & -20.03  & -26.04  & -32.05  
   & -12.79  & -15.80  & -18.80  & -21.80  & -24.81   \\
 2 & -3  & -11.99  & -17.74  & -23.75  & -29.76  
   & -11.71  & -14.71  & -17.72  & -20.72  & -23.72   \\
 3 & -3  & -13.68  & -20.77  & -26.79  & -32.80  
   & -13.23  & -16.25  & -19.25  & -22.26  & -25.26   \\
 4 & -3  & -14.29  & -20.42  & -26.43  & -32.44  
   & -13.13  & -16.14  & -19.14  & -22.15  & -25.15   \\
 5 & -3  & -14.61  & -20.67  & -26.68  & -32.68  
   & -13.45  & -16.46  & -19.46  & -22.47  & -25.47   \\ \hline
\multicolumn{11}{c}{Above and below are two problems, each with 5 eigenvalues
in $\calI$. The ``gaps'' $|\gamma_6/\gamma_5|$ are different.} \\ \hline
 1 & -7  & -15.65  & -29.64  & -43.63  & -51.58  
   & -24.43  & -31.43  & -38.42  & -45.41  & -49.10   \\
 2 & -7  & -17.52  & -31.50  & -45.49  & -50.99  
   & -25.53  & -32.53  & -39.53  & -46.50  & -49.18   \\
 3 & -7  & -14.95  & -28.92  & -42.91  & -50.58  
   & -24.34  & -31.34  & -38.34  & -45.33  & -49.14   \\
 4 & -7  & -16.03  & -30.02  & -44.02  & -51.32  
   & -25.04  & -32.04  & -39.03  & -46.01  & -49.18   \\
 5 & -7  & -15.19  & -29.18  & -43.17  & -50.58  
   & -24.79  & -31.78  & -38.78  & -45.77  & -49.09   \\
\end{tabular}
}
\caption{{\it Convergence of Eigenvalues and Residual Vectors.}\ 
This table demonstrates convergence when $p = \lambnum$. Rate is fundamentally determined 
by the gap $|\gamma_{p+1}/\gamma_\lambnum|$. The two test problems here
illustrate different convergence rates due to different gaps.
In practice, however, $p=\lambnum$ will likely results in slow convergence unless all
eigenvalues outside of the search interval $\calI = \lambintc$ are far from it.
}
\label{table:p_eq_lambnum}
\end{center}
\end{table}

\VS
The second example is similar to the first: complex GHEP, dimension 500. 
We generate $\lambnum=5$ eigenvalues well inside this interval.
Eigenvalues outside of $[15,17]$ are generated randomly except for five specially-placed ones. One
is placed so that $\gamma = 2^{-9}$, and four others are placed so that $\gamma$
is strictly bigger than, but extremely close to, $2^{-9}$.
The other 491 eigenvalues are random but at least 0.5 away from $[15,17]$.
By setting $p=9$, the convergence rate of the targets eigenvalues are expected to be
linear with a factor $(2^{-9})^2 = 2^{-18}$ or smaller. But the collaterals do not converge.
Table~\ref{table:eigenpair_convergence_partial} exhibits this phenomenon.

\begin{table}[h]
\begin{center}
{\scriptsize
\begin{tabular}{c | c || r r r r r r}
      \multicolumn{2}{c||}{}   & \multicolumn{6}{c}{Convergence of Eigenvalues }    \\
      \multicolumn{2}{c||}{}   & 
   \multicolumn{6}{c}{$\log_2 \left(|\lambda_j-\tilde{\lambda}_j|/\Enorm{\surM}\right)$ at Iteration $k$} 
   \\
 $j$  & $\log_2\left|\frac{\gamma_{9}}{\gamma_j}\right|$  
               &  \multicolumn{1}{c}{$k=1$} 
               &  \multicolumn{1}{c}{$k=2$}  
               &  \multicolumn{1}{c}{$k=3$}  
               &  \multicolumn{1}{c}{$k=4$}  
               &  \multicolumn{1}{c}{$k=5$}  
               &  \multicolumn{1}{c}{$k=6$}   \\
       &  & & & & & & \\ \hline
       &  & & & & & & \\
1  &    -9   &  -38.77  &  -56.77  &  -62.87  &  -63.00  &  -64.45  &  -66.45 \\
2  &    -9   &  -35.38  &  -53.39  &  -61.17  &  -62.87  &  -62.65  &  -62.55 \\
3  &    -9   &  -37.50  &  -55.51  &  -62.41  &  -62.37  &  -63.45  &  -61.81 \\
4  &    -9   &  -36.77  &  -54.78  &  -65.55  &  -65.45  &  -63.13  &  -63.00 \\
5  &    -9   &  -43.19  &  -61.21  &  -63.17  &  -62.21  &  -62.13  &  -64.87 \\ \hline
\multicolumn{8}{c}{above are $\lambnum$ target eigenvalues; below are ``collaterals''} \\ \hline
6  &  -0.0023   &  -33.53  &  -35.55  &  -35.56  &  -35.56  &  -35.56  &  -35.56 \\
7  &  -0.0017   &  -32.36  &  -37.61  &  -37.62  &  -37.62  &  -37.63  &  -37.63 \\
8  &  -0.0012   &  -31.26  &  -36.75  &  -36.77  &  -36.77  &  -36.77  &  -36.77 \\
9  &  -0.0006   &  -30.29  &  -35.07  &  -35.07  &  -35.07  &  -35.07  &  -35.07 \\
\end{tabular}
}
\caption{{\it Non-Convergence of Collaterals.}\ There are 5 targets, and 
subspace dimension $p$ is set to 9. The ratios $|\gamma_{p+1}/\gamma_j| \approx 1$ for
$j=6,7,8,9$ and thus the collateral eigenvalues do not converge. These iterations would have been
successful even if $p$ was set to be just 5.}
\label{table:eigenpair_convergence_partial}
\end{center}
\end{table}

\VS
The relationship between the subspace dimension $p$ and the actual number of targets $\lambnum$
can be subtle. In a typical scenario, $p > \lambnum$ and that the collaterals will also converge, except
at a slower speed. But in the case when the collaterals do not converge, 
one might think that there is no fundamental
harm in carrying them along except for a moderate increase of computational cost. 
Theorem~\ref{thm:eigen_convergence} suggests some potential problems. 
Consider the previous example where the 9-dimensional subspaces capture the $\lambnum$ target eigenvectors 
well, but not much of anything else.  The
reduced systems carry with them two subsystems. One is approximately $\Lambda_\lambnum$, and the
other of the form $\ctrans{H}\BSec{\Lambda}{\lambnum}H$ (in the notations of our theorems). If one is unlucky
to have the eigenvalues of the second subsystem closely approximating some of the targets, convergence
speed of target eigenvalues may be reduced to improvement of $|\gamma_{p+1}/\gamma_{\lambnum}|$ per step,
as opposed to $|\gamma_{p+1}/\gamma_{\lambnum}|^2$. 
More important, some of the eigenvectors may actually be wrong! The 
residual may not converge to zero. We illustrate this phenomenon in the next example. 
For simplicity, we use a real-valued simple eigenvalue problem of dimension 500. We place just one 
eigenvalue $\lambda=16$ in the 
middle of $\lambintc=[15,17]$ but place two eigenvalues at $15-\zeta$ and $17+\zeta$
so that $\quadf(15-\zeta)=\quadf(17+\zeta)=2^{-9}$. The remaining 
497 eigenvalues are randomly generated except
at least at a distance 3 away from $[15,17]$. We set $p=2$ and thus the target eigenvalue
should converge at least by $2^{-9}$ per iteration, but usually at $2^{-18}$ per step.
We contrivedly start the iterations with two vectors, one close to the target eigenvector, and the other
about the middle of the two eigenvectors associated with $15-\zeta$ and $17+\zeta$. That is, the Raleigh 
quotient with this vector is exactly 16. Table~\ref{table:eigenpair_convergence_eps} illustrates the problem
with a small gap between $\TSec{\Lambda}{m}$ and $\ctrans{H}\BSec{\Lambda}{m}H$. 
As exhibited there, one of the two eigenvalues of the reduced system
converge to 16, albeit only improving by $2^{-9}$ per step. Neither residual vector converges in any
practical sense.

\begin{table}[h]
\begin{center}
{\scriptsize
\begin{tabular}{c | r r r r r r r r }
       & 
 \multicolumn{8}{c}{Convergence Hampered by Spurious Eigenvalues}           \\

$p=2, \left|\frac{\gamma_3}{\gamma_1}\right| = 2^{-9}$  & 
\multicolumn{8}{c}{Examine $\log_2(\delta_k/\Enorm{A})$ at Iterations $k$}    \\
$\delta_k$ is  &  \multicolumn{1}{c}{$k=1$} 
               &  \multicolumn{1}{c}{$k=2$}  
               &  \multicolumn{1}{c}{$k=3$}  
               &  \multicolumn{1}{c}{$k=4$}  
               &  \multicolumn{1}{c}{$k=5$}  
               &  \multicolumn{1}{c}{$k=6$}  
               &  \multicolumn{1}{c}{$k=7$}  
               &  \multicolumn{1}{c}{$k=8$}   \\
       &  & & & & & & \\ \hline
       &  & & & & & & \\
$\min_{j=1,2}|\tilde{\lambda}_j-16|$ &
-18.45  &  -27.46  &  -37.60  &  -47.42  &  -46.00  &  -46.42  &  -46.00  &  -46.19  \\
&  & & & & & & \\
$\min_{j=1,2}\Enorm{A \tilde{x}_j - \tilde{\lambda}_j \tilde{x}_j}$ &
-6.13 & -6.13 & -6.20 & -12.66 & -15.84 & -16.13 & -15.92 &  -16.46 
\end{tabular}
}
\caption{{\it $O(\epsilon^k)$ Convergence of Eigenvalues and Non-Convergence of Residual.}\ 
This artificial
example is set up so that there is only one eigenvalue, $\lambda=16$, in the target interval. With
$p=2$ the ratio $\gamma_{p+1}/\gamma_1 = 2^{-9}$. In fact, $\gamma_2/\gamma_1 = 2^{-9}$ as well.
The collateral space is affecting the overall convergence. Convergence of eigenvalue falls back to
$2^{-9}$ per iteration, not at the often enjoyed speed of $2^{-18}$ per iteration. 
More importantly, the residual
vector is not really converging. The $1/\eta$ factor in Theorem~\ref{thm:eigen_convergence}
is realistic. For this example, convergence will be restored to the perfect situation had $p$ be set to
1.}
\label{table:eigenpair_convergence_eps}
\end{center}
\end{table}

\subsection{Multiple search intervals and splitting of clusters}
\label{experiment:splitting_clusters}

Given several search intervals, FEAST can compute eigenpairs within a search 
interval totally independently.
A natural use for this property is to split one large interval into several smaller ones, offering
parallelism. As in Experiment 4.1 in~\cite{kramer-etal-2013}, we apply this approach to the
generalized Hermitian eigenvalue problem specified by the matrix pair {\tt bcsstk11} and 
{\tt bcsstm11} from Matrix Market\footnote{{\tt http://math.nist.gov/MatrixMarket}}. We
set $\calI$ to $[0, 3.85\times 10^{7}]$ and partition it into
$K$ equal-length (sub)intervals, $K=1,2,3,4,5,10$. Table~\ref{table:MatrixMarket} summarizes the
result.

\begin{table}[h]
\begin{center}
{\scriptsize
\begin{tabular}{c | c c c c c c}

 & 
\multicolumn{6}{c}{Number of equal-length partition of $[0,3.85\times 10^7]$} \\
    & 1 & 2 & 3 & 4 & 5 & 10 \\
    &   &   &   &   &   &    \\\hline
    &   &   &   &   &   &    \\
${\rm orth}_{\rm all}$
    & $3.5\times 10^{-15}$ 
    & $2.7\times 10^{-14}$ 
    & $2.5\times 10^{-14}$ 
    & $2.6\times 10^{-14}$ 
    & $2.8\times 10^{-13}$ 
    & $3.6\times 10^{-13}$  \\
$\max_k{\rm orth}_k$
    & $3.5\times 10^{-15}$ 
    & $5.3\times 10^{-15}$ 
    & $4.4\times 10^{-15}$ 
    & $5.4\times 10^{-15}$ 
    & $8.8\times 10^{-15}$ 
    & $5.5\times 10^{-15}$  \\
$\min_k{\rm orth}_k$
    & $3.5\times 10^{-15}$ 
    & $2.8\times 10^{-15}$ 
    & $2.5\times 10^{-15}$ 
    & $1.6\times 10^{-15}$ 
    & $1.9\times 10^{-15}$ 
    & $9.8\times 10^{-16}$  \\
\end{tabular}
}
\caption{{\it Matrix Market test problem using multiple search intervals.}\ 
Each of the (sub)interval is computed with $q=16$. At most 3 applications of $\quadf(\surM)$
were required for convergence for both eigenvalues and residual vectors to machine precision. 
We report the multual $B$-orthogonality within one subinterval and across all subintervals:
${\rm ortho}_k \bydef \max_{i,j} |\ctrans{\vvx}_i B \vvx_j|$ for all distinct computed eigenvectors
from the $k$-th subinterval, $k=1,2,\ldots,K$.  The measure ${\rm ortho}_{\rm all}$ is defined 
similarly, except computed eigenvectors are drawn from all subintervals.
}
\label{table:MatrixMarket}
\end{center}
\end{table}

\VS
In this next example, FEAST computes eigenpairs of two attaching intervals $[1,2]$ and $[2,3]$
of a complex Hermitian eigenvalue problem ($B = I$) of dimension 500. A cluster of eigenvalues
$2 \pm \ell \times 10^{-10}$, $\ell = 1, 2, \ldots, 5$, is placed around 2. In addition,
there are 5 eigenvalues randomly placed in each of the interiors: $[1.2, 1.8]$ and $[2.2, 2.8]$.
The remaining 480 eigenvalues are placed randomly outside of $[1,3]$ separated by a distance of
at least 0.5. Although there are 10 eigenvalues in each of the two search intervals, 
any $p \le 15$ is detected as small by Algorithm FEAST with Estimate as all of $\iterate{\Bred}{k}$'s
eigenvalues are large, due to a large $|\gamma_{p+1}|$. Both search intervals are handled with
$p = 16$. In each search interval, all the associated spectrum together with the entire cluster, 
15 eigenpairs in total, are obtained accurately in the sense of residuals at the level of machine
roundoff by the fourth iteration. 

\VS
We now number the 20 eigenvalues inside $[1,3]$ from small to large. Denote the computed 
eigenpairs
on the ``left'' and ``right'' intervals $[1,2]$ and $[2,3]$ by
$(\widehat{\mu}_i,\vvu_i)$, 
$(\widehat{\nu}_j,\vvv_j)$,
$1 \le i \le 15$, and $6 \le j \le 20$. Indices from 6 to 15 correspond to those of the eigenvalue
cluster. Each of the two sets of 15 eigenvectors are mutually orthonormal. 
Table~\ref{table:splitting_cluster} 
shows the orthogonality properties across intervals.
The natural strategy in handling two intervals sharing a cluster is to adopt the complete set
of eigenpairs for the cluster from just one of the two intervals:
$\{(\widehat{\mu}_i,\vvu_i) | 1 \le i \le 15\} \cup
\{(\widehat{\nu}_j,\vvv_j) | 16 \le j \le 20\}$
or
$\{(\widehat{\mu}_i,\vvu_i) | 1 \le i \le 5\} \cup
\{(\widehat{\nu}_j,\vvv_j) | 6 \le j \le 20\}$.

\begin{table}[h]
\begin{center}
{\scriptsize
\begin{tabular}{c  c | c}
 & & $\max\left| \ctrans{\vvu_i} \vvv_j \right|$ \\ \hline
$6 \le i \le 10$   &   $11 \le j \le 15$   &    $1.60 \times 10^{-7}$   \\
$1 \le i \le 5$    &   $6  \le j \le 20$   &    $1.25 \times 10^{-14}$   \\
$1 \le i \le 15$   &   $16 \le j \le 15$   &    $1.30 \times 10^{-14}$   \\
\end{tabular}
}
\caption{{\it Splitting a cluster into two search intervals.}\
FEAST is applied on two intervals $[1,2]$ and $[2,3]$, each having 10 eigenvalues, but
the middle 10 of these 20 eigenvalues are clustered around 2, five on the left and five to the
right. With subspace dimension set to $p=16$, computation on each search interval produced
15 accurate eigenpairs: the 10 eigenpairs belong to its assigned interval and the 5 clustering
ones in its neighbor. We number the computed eigenpairs on $[1,2]$ as $(\widehat{\mu}_i,\vvu_i)$, 
$i=1,2,\ldots,15$, and those on $[2,3]$ as $(\widehat{\nu}_j,\vvv_j)$, $j=6,7,\ldots,20$. 
$\widehat{\mu}_i = \widehat{\nu}_i$ up to machine roundoff for $i=6,7,\ldots,15$. This table
examines the orthogonality properties of the computed eigenvectors. The first row illustrates
the fundamental nature of sensitivity of eigenvectors of clustering eigenvalues. Rows 2 and 3
show that one can adopt the entire cluster computed from either search interval to obtain
a complete spectrum for $[1,2]\cup [2,3] = [1,3]$.
}
\label{table:splitting_cluster}
\end{center}
\end{table}

\subsection{Estimation of eigenvalue count}
\label{experiment:estimate}

The number of eigenvalues in $\lambintc$ is valuable information; 
but guessing what that number is by
counting the number of computed eigenvalues of reduced systems that
fall inside $\lambintc$ is an unsound practice. Theorem~\ref{thm:e_estimate} suggests
that we can instead count the number of $\Bred$'s eigenvalues $\ge 1/4$. In the following complex
GHEP example of dimension 48, we generated 8 eigenvalues inside $\lambintc=[15,17]$.
We place on each side of $\lambintc$ 20 random eigenvalues of similar distribution to increase
the chance of ``spurious'' eigenvalues. We set $p$ to 12. Table~\ref{table:estimate_m_by_eig(B)}
shows that the distribution of $\Bred$'s eigenvalues is a much more robust indication of
$\lambnum$ than that of computed eigenvalues of reduced systems.

\begin{table}[h]
\begin{center}
{\scriptsize
\begin{tabular}{c | r r r r }
       & 
 \multicolumn{4}{c}{$p=12$ eigenvalues, $\mu_j$, of $\Bred_{(k)}$ at Iteration $k$} \\
 $j$           &  \multicolumn{1}{c}{$k=2$} 
               &  \multicolumn{1}{c}{$k=3$}  
               &  \multicolumn{1}{c}{$k=4$}  
               &  \multicolumn{1}{c}{$k=5$}   \\ \hline
1  &  1.031865  &  1.042407  &  1.042588  &  1.042589 \\
2  &  1.021825  &  1.037967  &  1.041980  &  1.042530 \\
3  &  0.999644  &  1.000591  &  1.000626  &  1.000662 \\
4  &  0.998992  &  0.999999  &  1.000000  &  1.000000 \\
5  &  0.998206  &  0.999997  &  1.000000  &  1.000000 \\
6  &  0.996776  &  0.999936  &  0.999999  &  1.000000 \\
7  &  0.929957  &  0.999593  &  0.999857  &  0.999909 \\
8  &  0.872044  &  0.989169  &  0.998930  &  0.999845 \\ \hdashline
9  &  0.201241  &  0.210605  &  0.211077  &  0.211304 \\
10 &  0.137805  &  0.146882  &  0.150190  &  0.153307 \\
11 &  0.086650  &  0.095591  &  0.098854  &  0.104075 \\
12 &  0.050975  &  0.078311  &  0.084910  &  0.088754 \\ \hline
$\# \mu_j \ge 1/4$ 
   & \multicolumn{1}{c}{8} 
   & \multicolumn{1}{c}{8} 
   & \multicolumn{1}{c}{8} 
   & \multicolumn{1}{c}{8} 
   \\
$\# \tilde{\lambda}_j \in \lambintc$ 
   & \multicolumn{1}{c}{10} 
   & \multicolumn{1}{c}{9} 
   & \multicolumn{1}{c}{9} 
   & \multicolumn{1}{c}{9} 
\end{tabular}
}
\caption{{\it Eigenvalue Count of $\iterate{\Bred}{k}$ to Estimate $\lambnum$.}\ 
In this example, there are 
exactly $\lambnum =8$ eigenvalues in $\lambintc = [15,17]$, $p = 12$. 
The computed eigenvalues of reduced problem may have more 
than 8 falling inside $\lambintc$. But the eigenvalue count of $\iterate{\Bred}{k}$ estimates
$\lambnum$ correctly from Iteration 2 onwards.}
\label{table:estimate_m_by_eig(B)}
\end{center}
\end{table}

\VS
Along the same line, the next example in Table~\ref{table:evaluate_p_by_eig(B)} 
shows that we can get an early indication that $p$ is set too
small by the eigenvalues of $\iterate{\Bred}{k}$. 
The example's setting is similar to the previous one, except
$p$ is set to 6, which is 2 less than the number of eigenvalues inside $\lambintc = [15,17]$. 
The actual computed eigenvalues do not converge, which was to be expected.

\begin{table}[h]
\begin{center}
{\scriptsize
\begin{tabular}{c || c c c c | c c c c }
       & 
 \multicolumn{4}{c|}{Computed eigenvalues $\tilde{\lambda}_j$}        &
 \multicolumn{4}{c}{Eigenvalues $\mu_j$ of $\Bred_{(k)}$,}              \\
       &
 \multicolumn{4}{c|}{of reduced system at Iteration $k$}  &
 \multicolumn{4}{c}{as a monitor, at Iteration $k$}                 \\ 
 $j$           &  $k=2$
               &  $k=3$
               &  $k=4$
               &  $k=5$
               &  $k=2$
               &  $k=3$
               &  $k=4$
               &  $k=5$
               \\  \hline
1 &  15.30888  &  15.30960  &  15.30709  &  15.30358 &   1.03473  &   1.03600  &   1.03768  &   1.03997 \\
2 &  15.53633  &  15.54323  &  15.54300  &  15.54067 &   1.01624  &   1.01788  &   1.02042  &   1.02356 \\
3 &  16.18678  &  16.21928  &  16.22838  &  16.23129 &   1.00012  &   1.00016  &   1.00016  &   1.00016 \\
4 &  16.54569  &  16.58401  &  16.58713  &  16.58817 &   0.99987  &   1.00002  &   1.00002  &   1.00002 \\
5 &  16.59844  &  16.63769  &  16.66165  &  16.66953 &   0.99871  &   0.99984  &   0.99984  &   0.99985 \\
6 &  16.81077  &  16.83859  &  16.85640  &  16.86408 &   0.74160  &   0.89480  &   0.96709  &   0.99060 \\
\end{tabular}
}
\caption{{\it Eigenvalue Count of $\iterate{\Bred}{k}$ to Judge $p$.}\ 
In this example, there are $\lambnum =8$ eigenvalues in $\lambintc = [15,17]$. But
$p$ is set too small at $p = 6$. Computed eigenvalues will not converge in general. 
This table illustrates that a too-small-$p$ can be detected by 
examining $\iterate{\Bred}{k}$'s eigenvalues as early as at the second 
iteration.
The symptom is that none of $\iterate{\Bred}{k}$'s eigenvalues are less than $1/4$.}
\label{table:evaluate_p_by_eig(B)}
\end{center}
\end{table}


\section{Conclusions}

We have shown that quadrature-based approximate spectral projectors are superb tools to be used
with the standard subspace iteration method. This combination is the essence of the recently proposed
FEAST algorithm and software~(\cite{polizzi-2009,FEAST-solver}). Our detailed analysis establishes
FEAST's convergence properties and shows how its robustness can be further enhanced
as methods for counting target eigenvalues and detecting inappropriate subspace dimension
are identified. Eigenproblems of large-and-sparse systems fit FEAST naturally as it can tolerate 
less-accurate solutions of linear systems, allowing the use of iterative linear solvers (see Example 3
in~\cite{polizzi-2009}).

\VS
Extension of the present work to non-Hermitian problems is a natural next step. Consider for
now a simple non-Hermitian eigenvalue problem for a diagonalizable matrix $A$ with an
eigendecomposition $A = X\,\Lambda\,\ctrans{Y}$, $X\ctrans{Y} = I$, where
$\Lambda$ is a diagonal matrix and $X$ is a set of right eigenvectors. 
($Y$ is a set of left eigenvectors.)
Hermitian FEAST is shown here to be subspace iteration with a special accelerator. 
Subspace iteration, however, is applicable to non-Hermitian problems, either focusing
on the right (or left) eigenspace as in~\cite{stewart-1976} or on both 
eigenspaces (\cite{bauer-1958} or~\cite{wilkinson-1965} page 609). 
Furthermore, our approximate spectral projector accelerator
is applicable to non-Hermitian matrices as well: Suppose $\calC$ is a simple region
(e.g. an ellipse) containing a spectrum of interest. Let
$\quadf(\mu)$ be of the form $\sum_{k=1}^q \alpha_k/(\beta_k-\mu)$ where none of the
$\beta_k$'s are in $A$'s spectrum. Then $\quadf(A) = X \,\quadf(\Lambda)\,\ctrans{Y}$.
Provided $|\quadf(\lambda)|\approx 1$ for $\lambda \in {\rm eig}(A) \cap \calC$
and $|\quadf(\mu)| \ll 1$ for $\lambda \in {\rm eig}(A) \setminus \calC$,
$\quadf(A)$ approximates the (right) spectral projector $X_\calC \ctrans{Y}_\calC$.
($\ctrans{\quadf}(A)$ approximates the left spectral projector $Y_\calC \ctrans{X}_\calC$.)
The function $\quadf(\mu)$ can be constructed by quadrature rules applied to the
Cauchy integral corresponding to $\calC$. For example, define the parametrization for
ellipses (similar to Equation~\ref{eqn:parametrization}):
$$
\phi_a(t) = \cos\left(\frac{\pi}{2}(1+t)\right) + \eye a 
            \sin\left(\frac{\pi}{2}(1+t)\right),
\quad -1 \le t \le 3,
$$
for a parameter $a$, $0 < a < \infty$. A parameter $a < 1$ corresponds to a flat ellipse,
and $a > 1$, tall. One can construct a rational function $\quadf(\mu)$ by applying a quadrature
rule to $\pi(\mu)$ in Equation~\ref{eqn:cauchy_parametrized}, with $\phi_a(t)$ in place
of $\phi(t)$. Because $A$'s spectrum can be complex, we need to study $\quadf(\mu)$'s behavior
for complex $\mu$. Figure~\ref{figure:rho_complex} illustrates that indeed the
quadrature approach is effective.
While more rigorous analysis is needed, the above discussions, supported by positive
early experimental results in~\cite{laux-2012}, make the idea of a non-Hermitian FEAST
credible.

\VS
On a different note, we have used Gauss-Legendre quadrature as our numerical integrator of choice for
$\quadf(\lambda)$'s accurate approximation to the characteristic function $\pi(\lambda)$ on
$\lambintc$ (see Equation~\ref{eqn:cauchy_contour}). Nevertheless, accurate approximation of
$\pi(\lambda)$ is by no means the only relevant property of an integrator suitable for FEAST.
Investigation of other quadrature rules are worthwhile. One observation is that
$|\quadf(\lambda)|$ needs not approximate 1 very well on a large portion of $\lambintc$ or decay
to zero outside of $\lambintc$ remarkably, both phenomena of which Gauss-Legendre possesses. 
It suffices to have, for example, $\quadf(\lambda)$ fluctuates as long as 
$1^+ \ge \quadf(\lambda) \ge \eta \gg 0$ on $\lambintc$ while keeping $|\quadf(\lambda)|$ uniformly small
outside $[\lambdamin-\delta,\lambdamax+\delta]$ for some small $\delta > 0$. 
Another observation is that 
it is valuable to have
a quadrature rule that provides increasing accuracy by progressively
adding more nodes (while maintaining the existing ones). This would require us
to find an alternative to Gauss-Legendre.
In short, opportunities for further work are ample.

\begin{figure}
\includegraphics[width=2.4in]{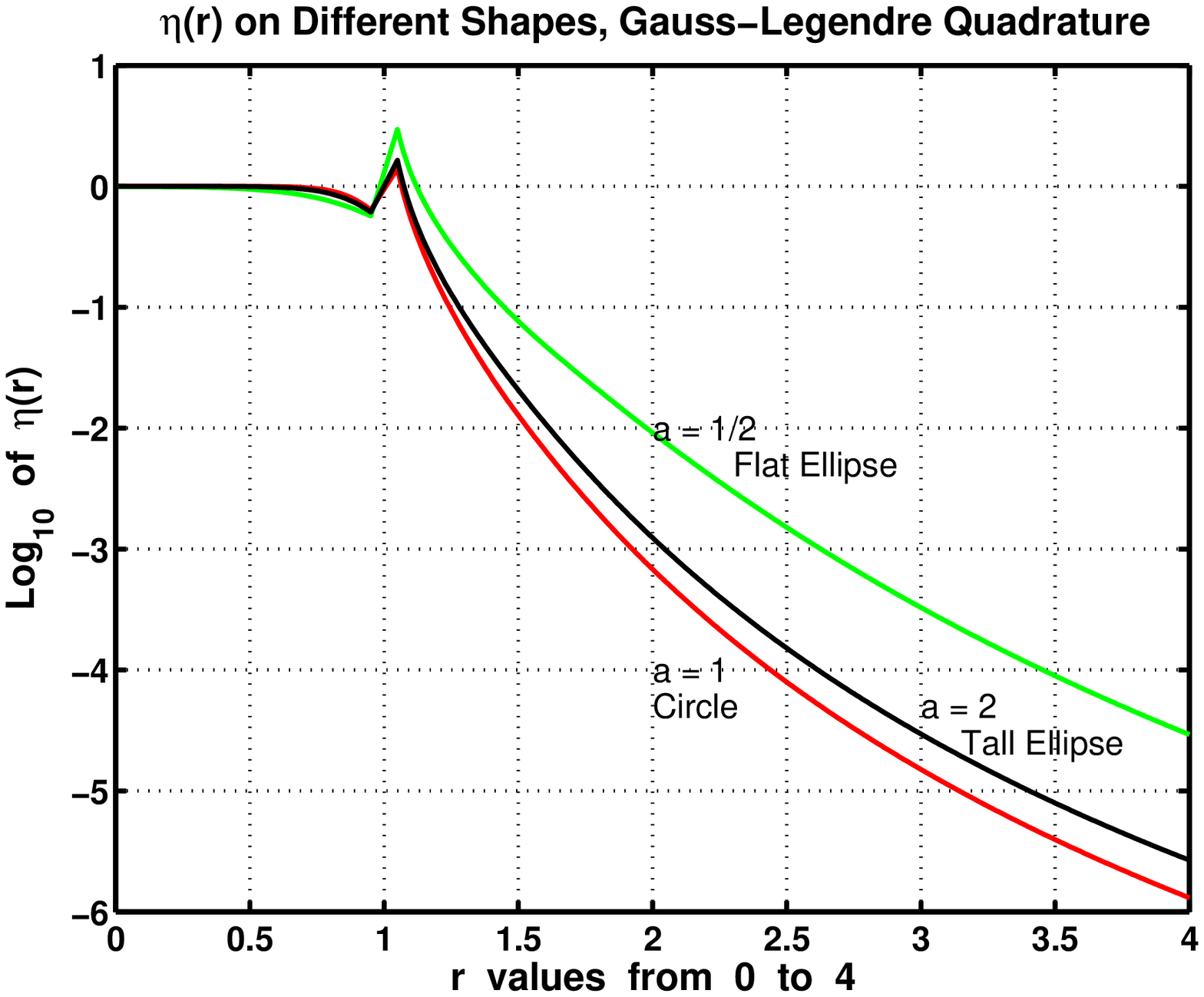}
\hspace{0.2in}
\includegraphics[width=2.4in]{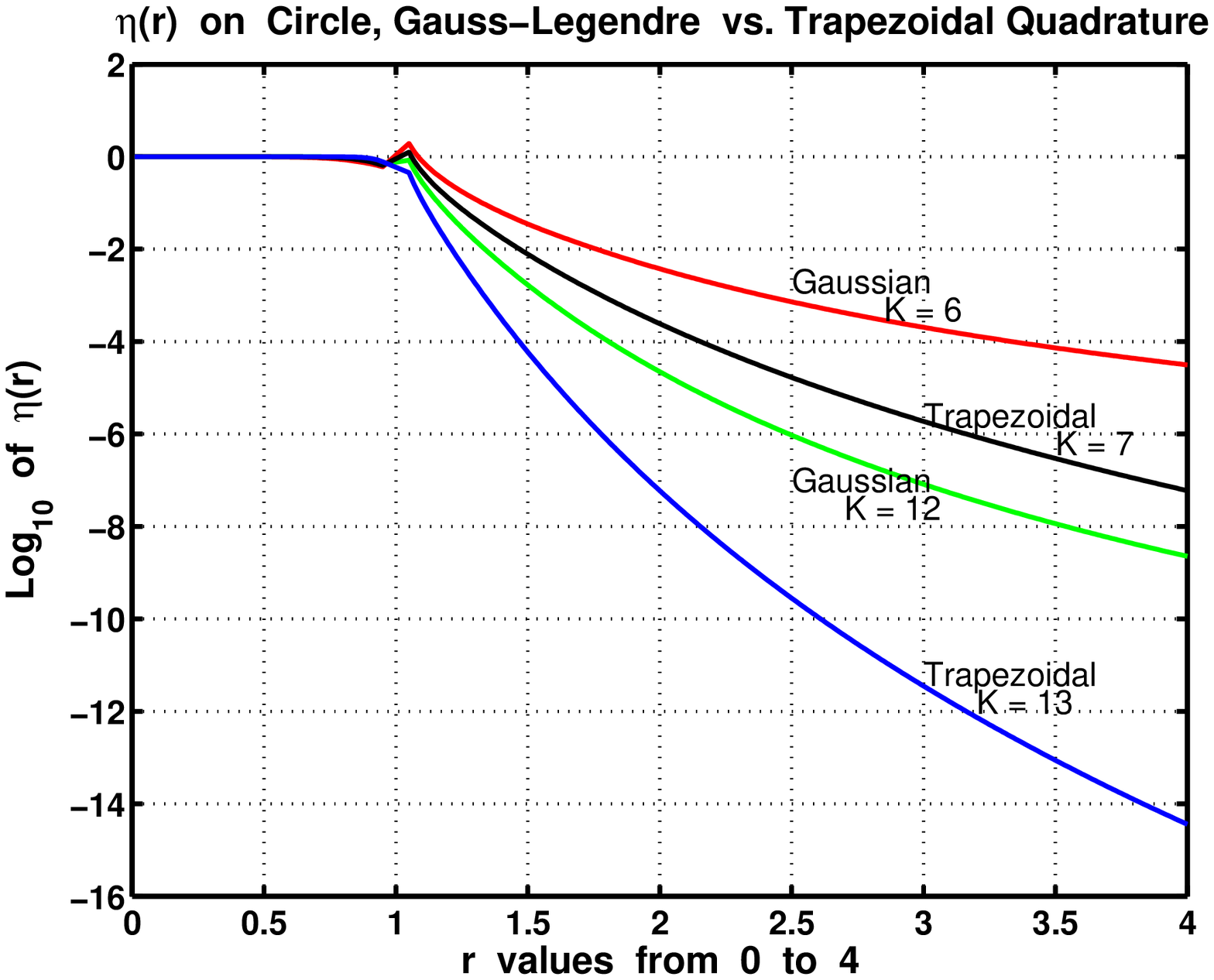}
\caption{
Let $\mu(r,t) = r[\cos(\frac{\pi}{2}(1+t)) + \eye a \sin(\frac{\pi}{2}(1+t))]$
for some fixed $a > 0$. Define $\eta(r)$ to be
$\min_t |\quadf(\mu(r,t))|$ for $0 \le r \le 1-0.01$, and
$\max_t |\quadf(\mu(r,t))|$ for $r \ge 1+0.01$. The function $\eta(r)$ serves as an indicator
of $\quadf$'s behavior as an approximate spectral projector. 
The plot on the left shows $\quadf(\mu)$'s behavior on the complex plane via
$\eta(r)$ for Gauss-Legendre $q=8$ on different elliptical shapes. The plot on the right
shows $\quadf(\mu)$'s behavior on a circular search region
for Gauss-Legendre and Trapezoidal quadratures of several different degrees.
}
\label{figure:rho_complex}
\end{figure}


\section{Acknowledgments}
We acknowledge the many fruitful discussions with Prof. Ahmed Sameh and Dr. Faisal Saied of Purdue University
as well as Dr. Victor Kostin and Dr. Sergey Kuznetsov of Intel Corporation.
In addition, Sergey Kuznetsov's rigorous testing of multiple versions of the FEAST software 
is invaluable.


\end{document}